\def\gr{\mathrm{gr}}
\def\k{{\Bbbk}}
\def\g{{\mathfrak g}}
\def\h{{\mathfrak h}}
\def\a{{\mathfrak a}}
\def\sl{{\mathfrak sl}}
\def\N{{\mathcal N}}
\def\I{{\mathcal I}}
\def\CC{{\mathbb C}}
\def\z{{\mathfrak z}}
\def\m{{\mathfrak m}}
\def\n{{\mathfrak n}}
\def\c{{\mathfrak c}}
\def\p{{\mathfrak p}}
\def\P{{\mathfrak P}}
\def\Z{{\mathbb Z}}
\def\N{{\mathbb N}}
\def\O{{\mathcal O}}
\def\End{\mathop{\fam0 End}}
\def\Lie{\mathop{\fam0 Lie}}
\def\Ad{\mathrm{Ad}\,}
\def\ker{\mathrm{Ker}\,}
\def\Mat{\mathop{\fam0 Mat}\nolimits}
\def\sl{\mathop{\mathfrak{sl}}\nolimits}
\def\ad{\mathrm{ad\,}}
\def\la{\langle}
\def\ra{\rangle}
\theoremstyle{plain}
\newtheorem{theorem}{Theorem}[section]
\newtheorem{corollary}{Corollary}[section]
\newtheorem{prop}{Proposition}[section]
\newtheorem{lemma}{Lemma}[section]
\theoremstyle{definition}
\newtheorem{defn}{Definition}[section]
\newtheorem{conj}{Conjecture}[section]
\theoremstyle{remark}
\newtheorem{rem}{Remark}[section]
\def\subtitle#1. {{\medskip\bf#1\par\nobreak\smallskip}}
\def\proclaim#1. {\medbreak\bgroup\noindent\bf#1. \it}
\def\endproclaim{\egroup
\ifdim\lastskip<\medskipamount\removelastskip\medskip\fi}
\def\citedef#1 {\advance\citation by1
  \expandafter\edef\csname#1\endcsname{{\the\citation}}
  \checkendcitedef}
\def\checkendcitedef#1{\ifx#1\endcitedef\else\citedef#1\fi}
\def\cite#1{\csname#1\endcsname}
\newtoks\nextauth
\newif\iffirstauth
\def\checkendauth#1{\ifx\endauth#1
        \iffirstauth\the\nextauth
        \else{} and \the\nextauth\fi,
    \else\iffirstauth\the\nextauth\firstauthfalse
        \else, \the\nextauth\fi
        \expandafter\auth\expandafter#1\fi}
\def\auth#1 #2 {\nextauth={#1 #2}\checkendauth}
\newif\ifinbook
\newif\ifbookref
\def\nextref#1 {\bookreffalse\inbookfalse
    \bibitem[\cite{#1}]{}
    \firstauthtrue
    \ignorespaces}
\def\paper#1{{\it#1,}}
\def\In#1{\inbooktrue In #1,}
\def\book#1{\bookreftrue{\it#1,}}
\def\journal#1{#1\ifinbook,\fi}
\def\bookseries#1{#1,}
\def\Vol#1{\ifbookref Vol. #1,\else\ifinbook Vol. #1,\else{\bf#1}\fi\fi
    \space\ignorespaces}
\def\nombre#1{no. #1}
\def\publisher#1{#1,}
\def\Year#1{\ifbookref #1.\else\ifinbook #1,\else(#1)\fi\fi
    \space\ignorespaces}
\def\Pages#1{\ifinbook pp. #1.\else #1.\fi}
\begin{document}
\title{Enveloping algebras of Slodowy slices and\\ Goldie rank}
\author{Alexander Premet}
\thanks{\nonumber{\it Mathematics Subject Classification} (2000 {\it revision}).
Primary 17B35. Secondary 17B63, 17B81.}
\address{School of Mathematics, University of Manchester, Oxford Road,
M13 9PL, UK} \email{Alexander.Premet@manchester.ac.uk} \maketitle

\medskip
\begin{center}
{\it Dedicated to Professor T.A.~Springer on the occasion of his
85th birthday}
\end{center}

\bigskip

\begin{abstract}
\noindent Let $U(\g,e)$ be the finite $W$-algebra associated with a
nilpotent element $e$ in a complex simple Lie algebra $\g=\Lie(G)$
and let $I$ be a primitive ideal of the enveloping algebra $U(\g)$
whose associated variety equals the Zariski closure of the nilpotent
orbit $(\Ad G)\,e$. Then it is known that $I\,=\,{\rm
Ann}_{U(\g)}\big(Q_e\otimes_{U(\g,\,e)}V\big)$ for some {\it finite
dimensional} irreducible $U(\g,e)$-module $V$, where $Q_e$ stands
for the generalised Gelfand--Graev $\g$-module associated with $e$.
The main goal of this paper is to prove that the Goldie rank of the
primitive quotient $U(\g)/I$ always divides $\dim V$. For
$\g=\mathfrak{sl}_n$, we use a theorem of Joseph on Goldie fields of
primitive quotients of $U(\g)$ to establish the equality  ${\rm
rk}\big(U(\g)/I\big)\,=\,\dim V$. We show that this equality
continues to hold for $\g\not\cong\mathfrak{sl}_n$ provided that the
Goldie field of $U(\g)/I$ is isomorphic to a Weyl skew-field and use
this result to disprove Joseph's version of the Gelfand--Kirillov
conjecture formulated in the mid-1970s.
\end{abstract}

\bigskip

\section{\bf Introduction}
\subsection{}\label{1.1}
Denote by $G$ a simple, simply connected algebraic group over
$\mathbb C$,  let $(e,h,f)$ be a nontrivial $\mathfrak{sl}_2$-triple
in the Lie algebra $\g=\Lie(G)$, and denote by
$(\,\cdot\,,\,\cdot\,)$ the $G$-invariant bilinear form on $\g$ for
which $(e,f)=1$. Let $\chi\in\g^*$ be such that $\chi(x)=(e,x)$ for
all $x\in\g$ and write $U(\g,e)$ for the enveloping algebra of the
Slodowy slice $e+\text{Ker}\,\ad\,f$ to the adjoint orbit $\O:=(\Ad
G)e$; see [\cite{P02}, \cite{GG}]. Recall that
$U(\g,e)\,=\,({\End}_{\g}\,Q_e)^{\text{op}}$, where $Q_e$ is the
generalised Gelfand--Graev $\g$-module associated with the triple
$(e,h,f)$. The module $Q_e$ is induced from a one-dimensional module
${\mathbb C}_\chi$ over of a nilpotent subalgebra $\m$ of $\g$ whose
dimension equals $d(e):=\frac{1}{2}\dim \O$. The Lie subalgebra $\m$
is $(\ad h)$-stable, all eigenvalues of $\ad h$ on $\m$ are
negative, and $\chi$ vanishes on $[\m,\m]$. The action of $\m$ on
${\mathbb C}_\chi={\mathbb C} 1_\chi$ is given by
$x(1_\chi)=\chi(x)1_\chi$ for all $x\in\m$. The algebra $U(\g,e)$ is
also known as the {\it finite $W$-algebra} associated with the pair
$(\g, e)$ and it shares many remarkable features with the universal
enveloping algebra $U(\g)$. It is worth mentioning that $Q_e$ is
free as a right module over  $U(\g,e)$; see [\cite{Sk}] for detail.

\smallskip

From now on we identify $\g$ with $\g^*$ by using the
$G$-equivariant Killing isomorphism $\g\ni x\mapsto
(x,\,\cdot\,)\in\g^*$. Given a primitive ideal $I$ of $U(\g)$ we
write $\mathcal{VA}(I)$ for the associated variety of $I$. By a
classical result of Lie Theory, proved by Borho--Brylinski in
special cases and by Joseph in general, the variety
$\mathcal{VA}(I)$ coincides with the closure of a nilpotent orbit in
$\g$. If $V$ is a finite dimensional irreducible $U(\g,e)$-module,
then it follows from Skryabin's theorem [\cite{Sk}] that the
$\g$-module $Q_e\otimes_{U(\g,\,e)}V$ is irreducible. Hence the
annihilator $I_V:={\rm Ann}_{U(\g)}\big(Q_e\otimes_{U(\g,\,e)}
V\big)$ is a primitive ideal of $U(\g)$. According to [\cite{P07},
Thm.~3.1(ii)], the variety $\mathcal{VA}(I_V)$ coincides with
Zariski closure of the orbit $\O$.

\smallskip

In [\cite{P07}], the author conjectured that the converse is also
true, i.e. for any primitive ideal $I$ of $U(\g)$ with
$\mathcal{VA}(I)=\overline{\O}$ there exists a finite dimensional
irreducible $U(\g,e)$-module $M$ such that $I=I_M$. This conjecture
was proved by the author in [\cite{P07'}] under a mild technical
assumption on the central character of $I$ (removed in [\cite{P10}])
and by Losev [\cite{Lo}] in general. Yet another proof of the
conjecture was later given by Ginzburg in [\cite{Gi}]. Losev's proof
employed his new construction of $U(\g,e)$ via equivariant Fedosov
quantization, whilst Ginzburg's proof was based of the notion of
Harish-Chandra bimodules for quantized Slodowy slices introduced and
studied in [\cite{Gi}]. The author's proof relied almost entirely on
characteristic $p$ methods.
\subsection{}\label{1.2} Write $\mathcal{X}_\O$ for the set of all primitive ideals
$I$ of $U(\g)$ with $\mathcal{VA}(I)=\overline{\O}$ and denote by
${\rm Irr}\,U(\g,e)$ the set of all isoclasses of finite dimensional
irreducible $U(\g,e)$-modules. For $x\in\g$ set $G_x:=\{g\in
G\,|\,\,(\Ad g)\,x=x\}$. It is well known that the group
$C(e):=G_e\cap G_f$ is reductive and its finite quotient
$\Gamma(e):=C(e)/C(e)^\circ$ identifies naturally with the component
group of the centraliser $G_e$. From the realization of $U(\g,e)$
obtained by Gan--Ginzburg [\cite{GG}] it is immediate that the
algebraic group $C(e)$ acts on $U(\g,e)$ by algebra automorphisms.
Thus, we can twist the module structure $U(\g,e)\times M\rightarrow
M$ of any $U(\g,e)$-module $M$ by an element $g\in C(e)$ to obtain a
new $U(\g,e)$-module, $^{g\!}M$, with underlying vector space $M$
and the $U(\g,e)$-action given by $u\cdot m=g^{-1}(u)\cdot m$ for
all $u\in U(\g,e)$ and $m\in M$. It turns out that if the
$U(\g,e)$-module $M$ is irreducible and $g\in C(e)$, then
$I_M=I_{\,^{g }M}$, so that the primitive ideal $I_M$ depends only
on the isomorphism class of $M$; see [\cite{P10}, 4.8], for example.
We thus obtain a natural surjective map $\varphi_e\colon\, {\rm
Irr}\,U(\g,e)\twoheadrightarrow\,\mathcal{X}_\O$ which assigns to an
isoclass $[M]\in{\rm Irr}\,U(\g,e)$ the primitive ideal
$I_M\in\mathcal{X}_\O$, where $M$ is any representative in $[M]$.
The above discussion shows that the map $\varphi_e$ is well defined
and its fibres are stable under the action of $C(e)$.

\smallskip

By [\cite{P07}, Lemma~2.4], there is an algebra embedding
$\Theta\colon\,U(\Lie\,C(e))\hookrightarrow U(\g,e)$ such that the
differential of the rational action of $C(e)$ on $U(\g,e)$ coincides
with $(\ad\circ\,\Theta)_{\vert\Lie(C(e))}$. As a consequence, every
two-sided ideal of $U(\g,e)$ is stable under the action of the
connected group $C(e)^\circ$. Applying this to the primitive ideals
of finite codimension in $U(\g,e)$ it is easy to observe that the
identity component $C(e)^\circ$ of $C(e)$ acts trivially on ${\rm
Irr}\,U(\g,e)$. We thus obtain a natural action of the finite group
$\Gamma(e)$ on the set ${\rm Irr}\,U(\g,e)$.
\subsection{}\label{1.3} Confirming another conjecture of the
author (first circulated around 2007) Losev proved that each fibre
of $\varphi_e$ is a single $\Gamma(e)$-orbit; see [\cite{Lo1},
Thm.~1.2.2]. This result shows that a generalised Gelfand--Graev
model of $I\in\mathcal{X}_\O$ is {\it almost} unique; in particular,
if $I_M=I=I_{M'}$ for two finite dimensional irreducible
$U(\g,e)$-modules $M$ and $M'$, then necessarily $\dim M=\dim M'$.
The main goal of this paper is to relate the latter number with the
Goldie rank of the primitive quotient $U(\g)/I$.

\smallskip

Let us recall the definition of the Goldie rank of a prime
Noetherian ring $\mathcal{A}$. An element of $\mathcal{A}$ is called
\emph{regular} if it is not a zero divisor in $\mathcal{A}$. By
Goldie's theory, the multiplicative set $S$ of all regular elements
of $\mathcal{A}$ satisfies the left and right Ore conditions.
Therefore, it can be used to form a classical ring of fractions
$\mathcal{Q}(\mathcal{A})=S^{-1}\mathcal{A}$; see [\cite{Dix}, 3.6]
for more detail. The ring $\mathcal{Q}(\mathcal{A})$ is prime
Artinian, hence isomorphic to ${\rm Mat}_n(\mathcal{D})$ for some
$n\in\mathbb{N}$ and some skew-field $\mathcal{D}$. We write $n={\rm
rk}(\mathcal{A})$ and call $n$ the \emph{Goldie rank} of
$\mathcal{A}$. The division ring $\mathcal{D}$ is called the {\it
Goldie field} of $\mathcal{A}$. It is well known that ${\rm
rk}(\mathcal{A})=1$ if and only if $\mathcal{A}$ is a domain. As an
important example, $U(\g)$ admits a classical ring of fractions
which is a skew-field. It is sometimes referred to as the {\it Lie
field} of $\g$ and denoted by $K(\g)$. The $n$th Weyl algebra
$${\bf A}_n(\CC)\,:=\,\langle
X_i,\partial/\partial\,X_i\,|\,\,\,1\le i\le n\rangle_{\CC}$$ is a
Noetherian domain, too, and the classical rings of fractions
$\mathcal{Q}(\mathbb{A}_n(\CC))$, where $n\in\N$, are known as {\it
Weyl skew-fields}.

\smallskip

 More generally, it follows from the Feith--Utumi
theorem that the Goldie rank of $\mathcal A$ coincides with the
maximum value of $k\in\N$ for which there is an $x\in\mathcal A$
with $x^k=0$ and $x^{k-1}\ne 0$ (we adopt the standard convention
that $x^0=1$ for any $x\in \mathcal A$). This is an elegant {\it
internal} characterization of Goldie rank, but it is not very useful
in practice. Recall that a two-sided ideal $I$ of $U(\g)$ is called
{\it completely prime} if ${\rm rk}\big(U(\g)/I\big)=1$, that is, if
$U(\g)/I$ is a domain.

\smallskip

 In [\cite{Lo}], Losev proved that for every
finite dimensional irreducible $U(\g,e)$-module $M$ the inequality
${\rm rk}\big(U(\g)/I_M\big)\big)\le \dim M$ holds. Our first
theorem strengthens this result:

\medskip

\noindent {\bf Theorem~A.}\label{A} {\it Let $M$ be a finite
dimensional irreducible $U(\g,e)$-module and let $I_M={\rm
Ann}_{U(\g)}\,\big(Q_e\otimes_{U(\g,\,e)} M\big)$ be the
corresponding primitive ideal in $\mathcal{X}_{\O}$. Then the Goldie
rank of the primitive quotient $U(\g)/I_M$ divides $\dim M$.}

\medskip

\noindent Our proof of Theorem~{\rm A} relies on reduction modulo
$\P$ in the spirit of [\cite{P07'}] and [\cite{P10}, Sect.~4] and
makes use of the techniques introduced in [\cite{P10'}, Sect.~2].

\smallskip

Denote by ${\mathcal X}_\lambda$ the set of all primitive ideals of
$U(\g)$ with central character $\lambda$. This is a finite set
partially ordered by inclusion. By an old result of Dixmier,
${\mathcal X}_\lambda$ contains a unique maximal element which will
be denoted by $I_{\rm max}(\lambda)$. It was pointed out to the
author by Losev (who attributes the observation to Vogan) that if
$\g=\mathfrak{sp}_{2n}$ and $e$ is a nilpotent element of $\g$
corresponding to partition $(2^n)$ of $2n$, then
$\mathcal{VA}(I_{\rm max}(\rho/2))=\overline{\O}$ and the
multiplicity of $\overline{\O}$ in ${\rm gr}\big(U(\g)/I_{\rm
max}(\rho/2)\big)$ equals $2^{n-1}$ (this follows from computations
made by McGovern in [\cite{Mg1}] and also from [\cite{Mg2},
Thm.~5.14(c)]). Here $\rho$ is the half-sum of positive roots and
$\O$ is the adjoint $G$-orbit of $e$. On the other hand, $I_{\rm
max}(\rho/2)=I_M$ for some finite dimensional irreducible
$U(\g,e)$-module $M$ and combining Theorems 1.3.1(2) and 1.2.2 of
[\cite{Lo1}] one derives that the multiplicity of $\overline{\O}$ in
${\rm gr}\big(U(\g)/I_{\rm max}(\rho/2)\big)$ equals
$[\Gamma(e):\Gamma(e,M)](\dim M)^2$, where $\Gamma(e,M)$ stands for
the stabiliser of the isoclass of $M$ in $\Gamma(e)$. Since the
primitive ideal $I_{\rm max}(\rho/2)$ is completely prime by
[\cite{Mg2}, Thm.~6.15] and $\Gamma(e)\cong\Z/2\Z$, we see that the
integer $q_M:=(\dim M)/{\rm rk}\big(U(\g)/I_M\big)$ does not always
divide the order of $\Gamma(e)$. In particular, it can happen
(outside type $\rm A$) that $\dim M>{\rm rk}\big(U(\g)/I_M\big)$. In
the present case, this inequality can be obtained more directly by
showing that the largest commutative quotient $U(\g,e)^{\rm ab}$ of
$U(\g,e)$ is a polynomial algebra in one variable generated by the
image of a Casimir element of $U(\g)$ and by using the fact that the
completely prime primitive ideal $I_{\rm max}(\rho/2)$ is not
induced; see Remark~\ref{rr} for detail.

\smallskip
It is becoming clear now that the dimensions of irreducible
$U(\g,e)$-modules $M$ are in general more computable than the Goldie
ranks of primitive quotients $U(\g)/I_M$. As an important example,
in [\cite{Lo2}] Losev uses earlier work by Mili{\v c}i{\'c}--Soergel
and Backelin to obtain a formula for multiplicities in the category
$O$ for $W$-algebras $U(\g,e)$ associated with nilpotent elements
$e$ of standard Levi type ($e$ is said to be of {\it standard Levi
type} if it is regular in a Levi subalgebra of $\g$). This gives a
theoretical possibility to compute the dimensions of finite
dimensional irreducible $U(\g,e)$-modules in Kazhdan--Lusztig terms.

\smallskip

There are three nilpotent orbits $\O$ in $\g$ with the property that
for $e\in \O$ the equality ${\rm rk}\big(U(\g)/I_M\big)=\dim M$
holds for any finite dimensional irreducible $U(\g,e)$-module $M$.
First, the zero orbit has this property because $U(\g,0)=U(\g)$ and
all primitive ideals in ${\mathcal X}_{\{0\}}$ have finite
codimension in $U(\g)$. Second, if $e$ lies in the regular nilpotent
orbit in $\g$, then classical results of Kostant on Whittaker
modules show that that the algebra $U(\g,e)$ is isomorphic to the
centre of $U(\g)$ and ${\rm rk}\big(U(\g)/I_M\big) = \dim M=1$ for
any irreducible $U(\g,e)$-module $M$; see [\cite{Ko}]. Third, the
minimal nonzero nilpotent orbit of $\g$ enjoys the above property by
[\cite{P07}, Thm.~1.2(v)]. Our second theorem indicates that the
same could be true for many  (but not all!) nilpotent orbits in
finite dimensional simple Lie algebras.

\smallskip

Let $\mathcal{D}_M$ stand for the Goldie field of the primitive
quotient $U(\g)/I_M$. When $\g=\mathfrak{sl}_n$, Joseph proved that
$\mathcal{D}_M$ is isomorphic to a Weyl skew-field, more precisely,
to the Goldie field of the Weyl algebra
$\mathbf{A}_{d(e)}(\mathbb{C})$; see [\cite{JoK}, Thm.~10.3].

\medskip

\noindent {\bf Theorem~B.} {\it If $\mathcal{D}_M$ is isomorphic to
the Goldie field of $\mathbf{A}_{d(e)}(\mathbb{C})$, then ${\rm
rk}\big(U(\g)/I_M\big)=\dim M$ for any finite dimensional
irreducible $U(\g,e)$-module $M$.}

\medskip

Combining Theorem~{\rm B} with the result of Joseph mentioned above
we see that for $\g=\mathfrak{sl}_n$ (and for $\g=\mathfrak{gl}_n$)
the equality ${\rm rk}\big(U(\g)/I_M\big)=\dim M$ holds for all
nilpotent elements $e\in\g$ and all finite dimensional irreducible
$U(\g,e)$-modules $M$.\footnote{Very recently, Jonathan Brundan has
reproved the equality ${\rm rk}\big(U(\g)/I_M\big)=\dim M$ for
$\g=\mathfrak{gl}_n$ by a characteristic zero argument based on
earlier results of Joseph and the theory of finite $W$-algebras of
type $\rm A$; see [\cite{Br}].}

\smallskip

In view of our earlier remarks Theorem~{\rm B} enables us to
describe the completely prime primitive ideals $I$ of
$U(\mathfrak{sl}_n)$ with $\mathcal{VA}(I)=\overline{\O}$ as exactly
those $I=I_M$ for which $M$ is a one-dimensional $U(\g,e)$-module
(one should also keep in mind here that in type $\rm A$ the
component group $\Gamma(e)$ acts trivially on ${\rm Irr}\,U(\g,e)$).
This description differs from the classical one which is due to
M{\oe}glin [\cite{Mo}]. M{\oe}glin's classification of the
completely prime primitive ideals of $U(\mathfrak{sl}_n)$ stems from
her confirmation of a long-standing conjecture of Dixmier according
to which any completely prime primitive ideal of $U(\g)$, for
$\g=\mathfrak{gl}_n$ or $\mathfrak{sl}_n$, coincides with the
annihilator of a $\g$-module induced from a one-dimensional
representation of a parabolic subalgebra of $\g$. We remark that for
any nilpotent element $e\in \g=\mathfrak{sl}_n$  a complete
description of one-dimensional $U(\g,e)$-modules can be deduced from
[\cite{P10}, 3.8] which, in turn, relies on the Brundan--Kleshchev
description of the finite $W$-algebras for $\mathfrak{gl}_n$ as
truncated shifted Yangians; see [\cite{BrK}].\footnote{Very
recently, Brundan has found a new proof of M{\oe}glin's theorem
relying almost entirely on finite $W$-algebra techniques and the
equality ${\rm rk}\big(U(\g)/I_M\big)=\dim M$ for
$\g=\mathfrak{gl}_n$; see [\cite{Br}].}

\smallskip

More generally, using Theorem~{\rm B} and arguing as in [\cite{P10},
4.9] it is straightforward to see that for $\g=\mathfrak{sl}_n$ and
any $d\in\N$ the set
$\mathcal{X}_\O(d)\,:=\,\{I\in\mathcal{X}_\O\,|\,{\rm
rk}\big(U(\g)/I\big)=d\}$ has a natural structure of a quasi-affine
algebraic variety. There is some hope that in the future one would
be able to combine Theorem~{\rm B} with the main results of
[\cite{BrK2}] to determine the scale factors of all Goldie rank
polynomials for $\g=\mathfrak{sl}_n$.\footnote{This goal has now
been achieved in [\cite{Br}] where Brundan used the equality ${\rm
rk}\big(U(\g)/I_M\big)=\dim M$ for $\g=\mathfrak{gl}_n$  to derive
exact formulae for all Goldie rank rank polynomials in type $\rm
A$.}

\smallskip

At this point it should be mentioned that a conjecture  of Joseph
(put forward in 1976) asserts that the Goldie field of a primitive
quotient of $U(\g)$ is {\it always} isomorphic to a Weyl skew-field;
see [\cite{JoGK}, 1.2] and references therein. It is needless to say
that Joseph's conjecture was inspired by the well known
Gelfand--Kirillov conjecture (from 1966) on the structure of the Lie
field $K(\g)$. Curiously, the latter conjecture fails for $\g$
simple outside types ${\rm A}_n$, ${\rm C}_n$ and ${\rm G}_2$ (see
[\cite{P10'}, Thm.~1]) and remains open in types ${\rm C}_n$ and
${\rm G}_2$ (in type $\rm A$ the conjecture was proved by Gelfand
and Kirillov themselves who made use of very special properties of
the so-called {\it mirabolic} subalgebras of $\mathfrak{sl}_n$; see
[\cite{GK}]). Joseph's conjecture is known to hold for many
primitive quotients outside type $\rm A$; see [\cite{JoGK},
\cite{JoK}] and [\cite{Ja}, Satz~15.24]. When it holds for all
primitive quotients $U(\g)/I$ with $\mathcal{VA}(I)=\overline{\O}$
(as happens for the three orbits mentioned above), one can
parametrise the completely prime primitive ideals in
$\mathcal{X}_\O$ by the points of the affine variety $\big({\rm
Specm}\,U(\g,e)^{\rm ab}\big)/{\Gamma(e)}$.

\smallskip

However, our discussion after the formulation of Theorem~{\rm A}
shows that for $\g=\mathfrak{sp}_{2n}$ with $n\ge 3$ and a nilpotent
element $e\in\g$ with $n$ Jordan  blocks of size $2$ we have that
$\dim M>{\rm rk}\big(U(\g)/I_M\big)$ for some irreducible finite
dimensional $U(\g,e)$-module $M$. Applying Theorem~{\rm B} we now
deduce that for that $M$ the Goldie field $\mathcal{D}_M$ is {\it
not} isomorphic to a Weyl skew-field. This shows that Joseph's
version of the Gelfand--Kirillov conjecture fails for
$\g=\mathfrak{sp}_{2n}$ with $n\ge 3$ (it is known that the
conjecture holds for all simple Lie algebras of rank $2$, but a
detailed proof of this fact is missing in the literature).

\medskip

\noindent{\bf Acknowledgements.} The results presented here were
announced in my talks at the conferences on symmetric spaces (Levico
Terme, 2010), noncommutative algebras (Canterbury, 2010) and
algebraic groups (Nagoya, 2010). I would like to thank Jon Brundan,
Ivan Losev and Toby Stafford for their interest and many useful
discussions. I would like to express my gratitude to the referee for
very careful reading and thoughtful suggestions.

\section{\bf Reducing modulo $\P$ admissible forms of primitive quotients}
\subsection{}\label{2.1} Let $G$ be a simple, simply connected algebraic group over $\mathbb
C$, and $\g=\Lie(G)$. Let $\h$ be a Cartan subalgebra of $\g$ and
$\Phi$ the root system of $\g$ relative to $\h$. Choose a basis of
simple roots $\Pi=\{\alpha_1,\ldots,\alpha_\ell\}$ in $\Phi$, let
$\Phi^+$ be the corresponding positive system in $\Phi$, and put
$\Phi^-:=-\Phi^+$. Let $\g=\n^-\oplus\h\oplus\n^+$ be the
corresponding triangular decomposition of $\g$ and choose a
Chevalley basis ${\mathcal
B}=\{e_\gamma\,|\,\,\gamma\in\Phi\}\cup\{h_\alpha\,|\,\,\alpha\in\Pi\}$
in $\g$. Set ${\mathcal
B}^{\pm}:=\{e_\alpha\,|\,\,\alpha\in\Phi^\pm\}$. Let $\g_\Z$ and
$U_\Z$ denote the Chevalley $\Z$-form of $\g$ and the Kostant
$\Z$-form of $U(\g)$ associated with $\mathcal B$. Given a
$\Z$-module $V$ and a $\Z$-algebra $A$, we write $V_A:=
V\otimes_{\Z}A$.

\smallskip

It follows from the Dynkin--Kostant theory that any nilpotent
$G$-orbit in $\g$ intersects with $\g_\Z$. Take a nonzero nilpotent
element $e\in\g_\Z$ and choose $f,h\in\g_{\mathbb Q}$ such that
$(e,h,f)$ is an $\sl_2$-triple in $\g_{\mathbb Q}$. Denote by
$(\,\cdot\,,\,\cdot\,)$ a scalar multiple of the Killing form
$\kappa$ of $\g$ for which $(e,f)=1$ and define $\chi\in\g^*$ by
setting $\chi(x)=(e,x)$ for all $x\in\g$. Given $x\in\g$ we set
${\mathcal O}(x):=(\Ad G)\cdot x$ and $d(x):=\frac{1}{2}\dim
{\mathcal O}(x)$.

\smallskip

Following [\cite{P07'}, \cite{P10}] we call a finitely generated
$\Z$-subalgebra $A$ of $\mathbb C$ {\it admissible} if
$\kappa(e,f)\in A^\times$ and all bad primes of the root system of
$G$ and the determinant of the Gram matrix of
$(\,\cdot\,,\,\cdot\,)$ relative to a Chevalley basis of $\g$ are
invertible in $A$. It follows from the definition that every
admissible ring $A$ is a Noetherian domain. Moreover, it is well
known (and easy to see) that for every ${\mathfrak P}\in {\rm
Specm}\,A$ the residue field $A/\P$ is isomorphic to ${\mathbb
F}_q$, where $q$ is a $p$-power depending on $\P$.
 We denote by $\Pi(A)$ the set of all primes $p\in
\mathbb N$ that occur this way. It follows from Hilbert's
Nullstellensatz, for example, that the set $\Pi(A)$ contains almost
all primes in $\mathbb N$ (see the proof of Lemma~4.4 in
[\cite{P10}] for more detail).

\smallskip

Let $\g(i)=\{x\in\g\,|\,\,[h,x]=ix\}$. Then
$\g=\bigoplus_{i\in\Z}\,\g(i)$, by the $\mathfrak{sl}_2$-theory, and
all subspaces $\g(i)$ are defined over $\mathbb Q$. Also,
$e\in\g(2)$ and $f\in\g(-2)$. We define a (nondegenerate)
skew-symmetric bilinear form $\la\,\cdot\,,\,\cdot\,\ra$ on $\g(-1)$
by setting $\la x,y\ra:=(e,[x,y])$ for all $x,y\in\g(-1)$. There
exists a basis $B=\{z_1',\ldots,z_s',z_1,\ldots,z_s\}$ of $\g(-1)$
contained in $\g_\mathbb Q$ and such that
$$\la z_i',z_j\ra=\delta_{ij},\qquad\ \la z_i,z_j\ra\,=\,\la z_i',z_j'\ra=0
\qquad\quad\ (1\le i,j\le s).$$ As explained in [\cite{P07'}, 4.1],
after enlarging $A$, possibly, one can assume that
$\g_A=\bigoplus_{i\in\,\Z}\,\g_A(i)$, that each
$\g_A(i):=\g_A\cap\g(i)$ is a freely generated over $A$ by a basis
of the vector space $\g(i)$, and that $B$ is a free basis of the
$A$-module $\g_A(-1)$.

\smallskip

Put $\m:=\g(-1)^0\oplus\sum_{i\le  -2}\,\g(i)$ where $\g(-1)^0$
denotes the $\mathbb C$-span of $z_1',\ldots, z_s'$. Then $\m$ is a
nilpotent Lie subalgebra of dimension $d(e)$ in $\g$ and $\chi$
vanishes on the derived subalgebra of $\m$; see [\cite{P02}] for
more detail. It follows from our assumptions on $A$ that
$\m_A=\g_A\cap\m$ is a free $A$-module and a direct summand of
$\g_A$. More precisely, $\m_A=\g_A(-1)^0\oplus\sum_{i\le
-2}\,\g_A(i)$, where $\g_A(-1)^0= \g_A\cap\g(-1)=
Az_1'\oplus\cdots\oplus Az_s'$. Enlarging $A$ further we may assume
that $e,f\in\g_A$ and that $[e,\g_A(i)]$ and $[f,\g_A(i)]$ are
direct summands of $\g_A(i+2)$ and $\g_A(i-2)$, respectively. Then
$\g_A(i+2)=[e,\g_A(i)]$ for all $i\ge 0$.

\smallskip

Write $\g_e=\Lie(G_e)$ for the centraliser of $e$ in $\g$. As in
[\cite{P02} 4.2, 4.3] we choose a basis $x_1,\ldots,
x_r,x_{r+1},\ldots, x_m$ of the free $A$-module
$\p_A:=\bigoplus_{i\ge 0}\,\g_A(i)$ such that
\begin{itemize}
\item[(a)]
$x_i\in\g_A(n_i)$ for some $n_i\in\Z_+$;

\smallskip

\item[(b)]
$x_1,\ldots, x_r$ is a free basis of the $A$-module $\g_A\cap\g_e$;

\smallskip

\item[(c)] $x_{r+1},\ldots, x_m \in[f,\g_A]$.
\end{itemize}
\subsection{}\label{2.2}
Let $Q_e$ be the generalised Gelfand-Graev $\g$-module associated to
$e$. Recall that $Q_e=U(\g)\otimes_{U(\m)}{\mathbb C}_\chi$, where
${\mathbb C}_\chi={\mathbb C}1_\chi$ is a one-dimensional
$\m$-module such that $x\cdot 1_\chi=\chi(x)1_\chi$ for all
$x\in\m$. Given $({\bf a},{\bf b})\in\Z_+^m\times\Z_+^s$ we let
$x^{\bf a}z^{\bf b}$ denote the monomial $x_1^{a_1}\cdots
x_m^{a_m}z_1^{b_1}\cdots z_s^{b_s}$ in $U(\g)$. Set
$Q_{e,\,A}:=U(\g_A)\otimes_{U(\m_A)}A_\chi$, where $A_\chi=A1_\chi$.
Note that $Q_{e,\,A}$ is a $\g_A$-stable $A$-lattice in $Q_e$ with
$\{x^{\bf i}z^{\bf j}\otimes 1_\chi,\,|\,\,({\bf i},{\bf
j})\in\Z_+^m\times\Z_+^s\}$ as a free basis. Given $({\bf a},{\bf
b})\in\Z_+^m\times Z_+^s$ we set
$$|({\bf a},{\bf b})|_e\,:=\,\,\textstyle{\sum}_{i=1}^m\,a_i(n_i+2)+\textstyle{\sum}_{i=1}^s\,b_i.$$
For ${\bf i}=(i_1,\ldots, i_k)\in\Z_+^k$ set $|{\bf
i}|:=\sum_{j=1}^ki_j$. By [\cite{P02}, Thm.~4.6], the algebra
$U(\g,e):=(\End_\g\,Q_e)^{\rm op}$ is generated over $\mathbb C$ by
endomorphisms $\Theta_1,\ldots,\Theta_r$ such that
\begin{eqnarray}\label{lam} \qquad\ \,\Theta_k(1_\chi)\,=\,\Big(x_k+\sum_{0<|({\bf i},{\bf j})|_e\le
n_k+2}\,\lambda_{{\bf i},\,{\bf j}}^k\,x^{\bf i}z^{\bf
j}\Big)\otimes 1_\chi,\qquad\quad \ 1\le k\le r,\end{eqnarray} where
$\lambda_{{\bf i},\,{\bf j}}^k\in\mathbb Q$ and $\lambda_{{\bf
i},\,{\bf j}}^k=0$ if either $|({\bf i},\,{\bf j})|_e=n_k+2$ and
$|{\bf i}|+|{\bf j}|=1$ or ${\bf i}\ne {\bf 0}$, ${\bf j}={\bf 0}$,
and $i_l=0$ for $l>r$. The monomials
$\Theta_1^{i_1}\cdots\Theta_r^{i_r}$ with $(i_1,\ldots,
i_r)\in\Z_+^r$ form a basis of the vector space $U(\g,e)$.

\smallskip

The monomial $\Theta_1^{i_1}\cdots\Theta_r^{i_r}$ is said to have
{\it Kazhdan degree} $\sum_{i=1}^r\,a_i(n_i+2)$. For $k\in\Z_+$ we
let $U(\g,e)_k$ denote the $\mathbb C$-span of all monomials
$\Theta_1^{i_1}\cdots\Theta_r^{i_r}$ of Kazhdan degree $\le k$. The
subspaces $U(\g,e)_k$, $k\ge 0$, form an increasing exhaustive
filtration of the algebra $U(\g,e)$ called the {\it Kazhdan
filtration}; see [\cite{P02}]. The corresponding graded algebra
$\gr\,U(\g,e)$ is a polynomial algebra in
$\gr\,\Theta_1,\ldots,\gr\,\Theta_r$. It follows from [\cite{P02},
Thm.~4.6] that there exist polynomials $F_{ij}\in{\mathbb
Q}[X_1,\ldots, X_r]$, where $1\le i<j\le r$, such that
\begin{eqnarray}\label{relations}
\qquad\
[\Theta_i,\Theta_j]\,=\,F_{ij}(\Theta_1,\ldots,\Theta_r)\qquad\quad\
\ (1\le i<j\le r).
\end{eqnarray}
Moreover, if $[x_i,x_j]\,\,=\,\,\sum_{k=1}^r \alpha_{ij}^k\, x_k$ in
$\g_e$, then
$$F_{ij}(\Theta_1,\ldots,\Theta_r)\,\equiv\,\sum_{k=1}^r\alpha_{ij}^k\Theta_k
+q_{ij}(\Theta_1,\ldots,\Theta_r)\ \ \ \big({\rm
mod}\,U(\g,e)_{n_i+n_j}\big),$$ where the initial form of $q_{ij}\in
{\mathbb Q}[X_1,\ldots, X_r]$ has total degree $\ge 2$ whenever
$q_{ij}\ne 0$ (as usual, by the initial form of a nonzero polynomial
$f\in {\mathbb Q}[X_1,\ldots, X_r]$ we mean the nonzero component of
smallest degree in the homogeneous decomposition of $f$). By
[\cite{P07'}, Lemma~4.1], the algebra $U(\g,e)$ is generated by
$\Theta_1,\ldots,\Theta_r$ subject to the relations
(\ref{relations}). In what follows we assume that our admissible
ring $A$ contains all $\lambda_{{\bf i},{\bf j}}^k$ in (\ref{lam})
and all coefficients of the $F_{ij}$'s in (\ref{relations}) (due to
the above PBW theorem for $U(\g,e)$ we can view the $F_{ij}$'s as
polynomials in $r=\dim \g_e$ variables with coefficients in $\mathbb
Q$).
\subsection{} \label{2.3}
Let $N_\chi$ denote the left ideal of $U(\g)$ generated by all
$x-\chi(x)$ with $x\in \m$. Then $Q_e\cong U(\g)/N_\chi$ as
$\g$-modules. As $N_\chi$ is a $(U(\g), U(\m))$-bimodule, the fixed
point space $(U(\g)/N_\chi)^{\ad \m}$ carries a natural algebra
structure given by $(x+N_\chi)\cdot (y+N_\chi)=xy+N_\chi$ for all
$x,y\in U(\g)$. Moreover, $U(\g)/N_\chi\cong Q_e$ as $\g$-modules
via the $\g$-module map sending $1+N_\chi$ to $1_\chi$, and
$(U(\g)/N_\chi)^{\ad \m}\cong U(\g,e)$ as algebras. Any element of
$U(\g,e)$ is uniquely determined by its effect on the generator
$1_\chi\in Q_e$ and the canonical isomorphism between
$(U(\g)/N_\chi)^{\ad \m}$ and $U(\g,e)$ is given by $u\mapsto
u(1_\chi)$ for all $u\in(U(\g)/N_\chi)^{\ad \m}$. This isomorphism
is defined over $A$. In what follows we will often identify $Q_e$
with $U(\g)/N_\chi$ and $U(\g,e)$ with $(U(\g)/N_\chi)^{\ad \m}$.

\smallskip

Let $U(\g)=\bigcup_{j\in\Z}\, {\sf K}_jU(\g)$ be the Kazhdan
filtration of $U(\g)$; see [\cite{GG}, 4.2]. Recall that ${\sf
K}_jU(\g)$ is the $\mathbb C$-span of all products $x_1\cdots x_t$
with $x_i\in\g(n_i)$ and $\sum_{i=1}^t\, (n_i+2)\le j$. The Kazhdan
filtration on $Q_e$ is defined by ${\sf K}_jQ_e:=\pi({\sf
K}_jU(\g))$ where $\pi\colon U(\g)\twoheadrightarrow U(\g)/{\mathcal
I}_\chi$ is the canonical homomorphism. It turns $Q_e$ into a
filtered $U(\g)$-module. The Kazhdan grading of $\gr\, Q_e$ has no
negative components, and the Kazhdan filtration of $U(\g,e)$ defined
in \ref{2.2} is nothing but the filtration of
$U(\g,e)=(U(\g)/N_\chi)^{\ad \m}$ induced from the Kazhdan
filtration of $Q_e$ through the embedding $(U(\g)/N_\chi)^{\ad
\m}\hookrightarrow Q_e$; see [\cite{GG}] for more detail.

\smallskip

Let $U(\g_A,e)$ denote the $A$-span of all monomials
$\Theta_1^{i_1}\cdots\Theta_r^{i_r}$ with
$(i_1,\ldots,i_r)\in\Z_+^r$. Our assumptions on $A$  guarantee that
$U(\g_A,e)$ is an $A$-subalgebra of $U(\g,e)$ contained in
$(\End_{\g_A}\,Q_{e,\,A})^{\rm op}$. It is immediate from the above
discussion that $Q_{e,A}$ identifies with the $\g_A$-module
$U(\g_A)/N_{\chi,A},$ where $N_{\chi,A}$ stands for the left ideal
of $U(\g_A)$ generated by all $x-\chi(x)$ with $x\in \m_A$. Hence
$U(\g_A,e)$ embeds into the $A$-algebra
$\big(U(\g_A)/N_{\chi,A}\big)^{\ad\,\m_A}$. As $Q_{e,\,A}$ is a free
$A$-module with basis consisting of all $x^{\bf i}z^{\bf j}\otimes
1_\chi$ with $({\bf i},{\bf j})\in\Z_+^m\times\Z_+^s$ we have that
\begin{eqnarray}\label{A-algebras}
U(\g_A,e)\,=\,({\End}_{\g_A}\,Q_{e,\,A})^{\rm op}\,\cong\,
\big(U(\g_A)/N_{\chi,A}\big)^{\ad\,\m_A}.
\end{eqnarray}
Also, $Q_{\chi,\,A}$ is free as a right $U(\g_A,e)$-module; see
[\cite{P10}, 2.3] for detail.
\subsection{}\label{2.4}
We now pick $p\in\Pi(A)$ and denote by $\k$ an algebraic closure of
${\mathbb F}_p$. Since the form $(\,\cdot\,,\,\cdot\,)$ is
$A$-valued on $\g_A$, it induces a symmetric bilinear form on the
Lie algebra $\g_\k\cong\g_A\otimes_A\k$. We use the same symbol to
denote this bilinear form on $\g_\k$. Let $G_\k$ be the simple,
simply connected algebraic $\k$-group with hyperalgebra
$U_\k=U_\Z\otimes_Z\k$. Note that $\g_\k=\Lie(G_\k)$ and the form
$(\,\cdot\,,\,\cdot\,)$ is $(\Ad\, G_\k)$-invariant and
nondegenerate. For $x\in\g_A$ we set $\bar{x}:=x\otimes 1$, an
element of $\g_\k$. To ease notation we identify $e, f$ with the
nilpotent elements $\bar{e},\bar{f}\in\g_\k$ and $\chi$ with the
linear function $(e,\,\cdot\,)$ on $\g_\k$.

The Lie algebra $\g_\k=\Lie(G_\k)$ carries a natural $[p]$-mapping
$x\mapsto x^{[p]}$ equivariant under the adjoint action of $G_\k$.
 The subalgebra of $U(\g_\k)$
generated by all $x^p-x^{[p]}\in U(\g_\k)$ is called the $p$-{\it
centre} of $U(\g_\k)$ and denoted $Z_p(\g_\k)$ or $Z_p$ for short.
It is immediate from the PBW theorem that $Z_p$ is isomorphic to a
polynomial algebra in $\dim\g$ variables and $U(\g_\k)$ is a free
$Z_p$-module of rank $p^{\dim \g}$. For every maximal ideal $J$ of
$Z_p$ there is a unique linear function $\eta=\eta_J\in\g_\k^*$ such
that
$$J=\,\langle x^p-x^{[p]}-\eta(x)^p1\,|\,\,\,x\in\g_\k\rangle.$$
Since the Frobenius map of $\k$ is bijective, this enables us to
identify the maximal spectrum ${\rm Specm}\,Z_p$ with $\g_\k^*$.

Given $\xi\in\g_\k^*$ we denote by $I_\xi$ the two-sided ideal of
$U(\g_\k)$ generated by all $x^p-x^{[p]}-\xi(x)^p1$ with
$x\in\g_\k$, and set $U_\xi(\g_\k):=U(\g_\k)/I_\xi$. The algebra
$U_\xi(\g_\k)$ is called the {\it reduced enveloping algebra} of
$\g_\k$ associated to $\xi$. The preceding remarks imply that
$\dim_\k U_\xi(\g_\k)=p^{\dim\g}$ and $I_\xi \cap Z_p=J_\xi$, the
maximal ideal of $Z_p$ associated with $\xi$. Every irreducible
$\g_\k$-module is a module over $U_\xi(\g_\k)$ for a unique
$\xi=\xi_V\in\g_\k^*$. The linear function $\xi_V$ is called the
{\it $p$-character} of $V$; see [\cite{P95}] for more detail. By
[\cite{P95}], any irreducible $U_\xi(\g_\k)$-module has dimension
divisible by $p^{(\dim \g-\dim \z_\xi)/2},$ where
$\z_\xi=\{x\in\g_\k\,|\,\,\xi([x,\g_\k])=0\}$ is the stabiliser of
$\xi$ in $\g_\k$. We denote by $Z_{G_\k}(\xi)$ the coadjoint
stabiliser of $\xi$ in $G_\k$.

\subsection{}\label{2.5}
For $i\in\Z$, set $\g_\k(i):=\g_A(i)\otimes_A\k$ and put
$\m_{\k}:=\m_A\otimes_A\k$. Our assumptions on $A$ yield that the
elements $\bar{x}_1,\ldots, \bar{x}_r$ form a basis of the
centraliser $(\g_\k)_e$ of $e$ in $\g_\k$ and that $\m_{\k}$ is a
nilpotent subalgebra of dimension $d(e)$ in $\g_\k$. Set
$Q_{e,\,\k}:=U(\g_\k)\otimes_{U(\m_\k)}\k_{\chi},$ where
$\k_\chi=A_\chi\otimes_A\k\,=\,\k1_{\chi}$. Clearly, $\k1_{\chi}$ is
a one-dimensional $\m_\k$-module with the property that
$x(1_\chi)=\chi(x)1_\chi$ for all $x\in\m_\k$. It follows from our
discussion in \ref{2.2} and \ref{2.3} that $Q_{e,\,\k}\cong
Q_{e,\,A}\otimes_A\k$ as modules over $\g_\k$ and $Q_{e,\,\k}$ is a
free right module over the $\k$-algebra
$$U(\g_\k,e):=U(\g_A,e)\otimes_A\k.$$ Thus we may identify $U(\g_\k,e)$
with a subalgebra of
$\widehat{U}(\g_\k,e):=\big(\End_{\g_\k}\,Q_{e,\,\k}\big)^{\rm op}$.
The algebra $U(\g_\k,e)$ has $\k$-basis consisting of all monomials
$\bar{\Theta}_1^{i_1}\cdots \bar{\Theta}_{r}^{i_r}$ with
$(i_1,\ldots, i_r)\in\Z_+^r$, where $\bar{\Theta}_i:=\Theta_i\otimes
1\in U(\g_A,e)\otimes_A\k$. Given $g\in A[X_1,\ldots, X_n]$ we write
$^p g$ for the image of $g$ in the polynomial algebra
$\k[X_1,\ldots, X_n]\,=\,A[X_1,\ldots, X_n]\otimes_A\k$. Since all
polynomials $F_{ij}$ are in $A[X_1,\ldots, X_r]$, it follows from
the relations (\ref{relations}) that
\begin{eqnarray}\label{relations'}
\qquad\
[\bar{\Theta}_i,\bar{\Theta}_j]\,=\,{^p\!}F_{ij}(\bar{\Theta}_1,\ldots,\bar{\Theta}_r)\qquad\quad\
\ (1\le i<j\le r).
\end{eqnarray}
By [\cite{P10}, Lemma~2.1],  the algebra $U(\g_\k,e)$ is generated
by the elements $\bar{\Theta}_1,\ldots,\bar{\Theta}_r$ subject to
the relations (\ref{relations'}).

\smallskip

Let $\g_A^*$ be the $A$-module dual to $\g_A$ and let $\m_A^\perp$
denote the set of all linear functions on $\g_A$ vanishing on
$\m_A$. By our assumptions on $A$, this is a free $A$-submodule and
a direct summand of $\g_A^*$. Note that $\m_A^\perp\otimes_A\mathbb
C$ and $\m_A^\perp\otimes_A\k$ identify naturally with with the
annihilators $\m^\perp:=\{f\in\g^*\,|\,\,f(\m)=0\}$ and
$\m_\k^\perp:=\{f\in\g^*_\k\,|\,\,f(\m_\k)=0\}$, respectively.

\smallskip

Following [\cite{P10}], for $\eta\in\chi+\m_\k^\perp$ we set
$Q_{e}^{\eta}:=Q_{e,\,\k}/I_\eta Q_{e,\,\k}$. By construction,
$Q_{e}^{\eta}$ is a $\g_\k$-module with $p$-character $\eta$.  Each
$\g_\k$-endomorphism $\bar{\Theta_i}$ of $Q_{e,\,\k}$ preserves
$I_\eta Q_{e,\,\k}$, hence induces a $\g_\k$-endomorphism of
$Q_{e}^{\eta}$ which we denote by $\theta_i$. We write
$U_\eta(\g_\k,e)$ for the algebra
$\big(\End_{\g_\k}\,Q_{e}^{\eta}\big)^{\rm op}$. Since the
restriction of $\eta$ to $\m_\k$ coincides with that of $\chi$, the
left ideal of $U(\g_\k)$ generated by all $x-\eta(x)$ with
$x\in\m_\k$ equals $N_{\chi,\,\k}:=N_{\chi,\,A}\otimes_A\k$ and
$\k_\chi=\k_\eta$ as $\m_\k$-modules. We denote by $N_{\eta,\,\chi}$
the left ideal of $U_\eta(\g_\k)$ generated by all $x-\chi(x)$ with
$x\in\m_\k$. The following are proved in [\cite{P10}, 2.6]:

\smallskip

\begin{itemize}
\item[(a)] $Q_{e}^{\eta}\,\cong\,
U_{\eta}(\g_\k)\otimes_{U_{\eta}(\m_\k)}\k_\chi$ as $\g_\k$-modules;

\smallskip

\item[(b)] $U_{\eta}(\g_\k,e)\,\cong\,\big(U_{\eta}(\g_\k)/
U_{\eta}(\g_\k)N_{\eta,\,\chi}\big)^{\ad\m_\k}$;

\smallskip
\item[(c)] $Q_{e}^{\eta}$ is a projective generator for
$U_{\eta}(\g_\k)$ and
$U_{\eta}(\g_\k)\,\cong\,\Mat_{p^{d(e)}}\big(U_{\eta}(\g_\k,e)\big)$;

\smallskip

\item[(d)] the monomials $\theta_1^{i_1}\cdots\theta_r^{i_r}$ with
$0\le i_k\le p-1$ form a $\k$-basis of $U_{\eta}(\g_\k,e)$.
\end{itemize}

\smallskip

\noindent Moreover, a Morita equivalence between
$U_\eta(\g_\k,e)$-{\sf mod} and $U_\eta(\g_\k)$-{\sf mod} in
part~(b) is given explicitly by the functor that sends a finite
dimensional $U_\eta(\g_\k,e)$-module $W$ to the
$U_\eta(\g_\k)$-module
$\widetilde{W}=Q_e^\eta\otimes_{U_\eta(\g_\k,\,e)}W$, whilst the
quasi-inverse functor from $U_\eta(\g_\k)$-{\sf mod} to
$U_\eta(\g_\k,e)$-{\sf mod} sends a $U_\eta(\g_\k)$-module
$\widetilde{W}$ to its subspace
$$W={\rm Wh}_\eta(\widetilde{W}):=\{v\in \widetilde{W}\,|\,\,x.v=\eta(x)v\ \ \mbox{for all
}\,x\in\m_\k\}.$$

Recall from \ref{2.1} the $A$-basis $\{x_1,\ldots, x_r,
x_{r+1},\ldots,x_m\}$ of $\p_A$ and set
$$X_i=\left\{
\begin{array}{ll}
z_i&\mbox{if $\ 1\le i\le s$},\\
x_{r-s+i}&\mbox{if $\ s+1\le i\le m-r+s$}.
\end{array}\right .$$
For ${\bf a}\in \Z_+^{d(e)}$, put $X^{\bf a}:=X_1^{a_1}\cdots
X_{d(e)}^{a_{d(e)}}$ and $\bar{X}^{\bf a}:=\bar{X}_1^{a_1}\cdots
\bar{X}_{d(e)}^{a_{d(e)}}$, elements of $U(\g_A)$ and $U(\g_\k)$,
respectively. By [\cite{P07'}, Lemma~4.2(i)], the vectors $X^{\bf
a}\otimes 1_\chi$ with ${\bf a}\in\Z^{d(e)}$ form a free basis of
the right $U(\g_A,e)$-module $Q_{e,\,A}$. Let $\a_\k$ be the
$\k$-span of $\bar{X}_1,\ldots, \bar{X}_{d(e)}$ in $\g_\k$ and put
$\widetilde{\a}_\k:=\a_\k\oplus\z_\chi$. Note that $
\a_\k\,=\,\{x\in\widetilde{\a}_\k\,|\,\,\,(x,\ker\ad f)=0\}$. Since
$\chi$ vanishes on $\widetilde{\a}_\k$, we may identify the
symmetric algebra $S(\widetilde{\a}_\k)$ with the coordinate ring
$\k[\chi+\m_\k^\perp]$ by setting $x(\eta):=\eta(x)$ for all
$x\in\widetilde{\a}_\k$ and $\eta\in\chi+\m_\k^\perp$ and extending
to $S(\widetilde{\a}_\k)$ algebraically.

\smallskip

Given a subspace $V\subseteq \g_\k$ we denote by $Z_p(V)$ the
subalgebra of the $p$-centre $Z(\g_\k)$ generated by all
$x^p-x^{[p]}$ with $x\in V$. Clearly, $Z_p(V)$ is isomorphic to a
polynomial algebra in $\dim_\k V$ variables. Let $\rho_\k$ denote
the representation of $U(\g_\k)$ in $\End_\k Q_{e,\,\k}$.

\smallskip

In [\cite{P10}, 2.7] we proved the following:
\begin{theorem}\label{U-hat}
The algebra $\widehat{U}(\g_\k,e)$ is generated by $U(\g_\k,e)$ and
$\rho_\k(Z_p)\cong Z_p(\widetilde{\a}_\k)$. Moreover,
$\widehat{U}(\g_\k,e)$ is a free $\rho_\k(Z_p)$-module with basis
$\{\bar{\Theta}_1^{a_1}\cdots\bar{\Theta}_r^{a_r}\,|\,\,0\le a_i\le
p-1\}$ and $\widehat{U}(\g_\k,e)\cong U(\g_\k,e)\otimes Z_p(\a_\k)$
as $\k$-algebras.
\end{theorem}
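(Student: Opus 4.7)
The plan is threefold: (i) identify $\rho_\k(Z_p)$ with $Z_p(\widetilde{\a}_\k)$ as algebras; (ii) establish injectivity of the multiplication map $\mu\colon U(\g_\k,e)\otimes_\k Z_p(\a_\k)\to\widehat U(\g_\k,e)$ via a Kazhdan leading-term analysis; (iii) prove surjectivity of $\mu$ by a Whittaker-space argument. Since $\rho_\k(Z_p)$ is central in $\widehat U(\g_\k,e)$ (because $Z_p$ is central in $U(\g_\k)$), the $\k$-vector-space isomorphism produced by (ii) and (iii) is automatically an isomorphism of $\k$-algebras, and the generation statement is immediate from (i).

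For (i): all primes in $\Pi(A)$ are good, so the weight estimate $\m_\k\subseteq\bigoplus_{i\le -1}\g_\k(i)$ combined with the homogeneity of the $[p]$-map forces $x^{[p]}\in\bigoplus_{i\le -p}\g_\k(i)$ for every $x\in\m_\k$. Since $\chi=(e,\,\cdot\,)\in\g_\k(2)^*$ vanishes on every weight space except $\g_\k(-2)$, and $-p<-2$, this yields $\chi(x^{[p]})=0$. Hence
\[
\rho_\k\big(x^p-x^{[p]}-\chi(x)^p\!\cdot\!1\big)(1_\chi)\,=\,\chi(x)^p-\chi(x^{[p]})-\chi(x)^p\,=\,0
\]
for every $x\in\m_\k$, and by centrality $x^p-x^{[p]}-\chi(x)^p\!\cdot\!1$ annihilates all of $Q_{e,\k}$. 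Under the Frobenius-twisted identification $Z_p\cong\k[\g_\k^*]$, these elements cut out exactly the affine subspace $\chi+\m_\k^\perp$, so $\rho_\k$ factors through $\k[\chi+\m_\k^\perp]\cong S(\widetilde{\a}_\k)\cong Z_p(\widetilde{\a}_\k)$. The opposite containment (nothing extra is killed) follows from $Q_e^\eta\ne 0$ for each $\eta\in\chi+\m_\k^\perp$, as given by \ref{2.5}(a,c).

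For (ii) and (iii): endow $Q_{e,\k}$ with the Kazhdan filtration and $\widehat U(\g_\k,e)$ with the filtration induced via $\phi\mapsto\phi(1_\chi)$. By (\ref{lam}), $\bar\Theta_i(1_\chi)\equiv\bar x_i\otimes 1_\chi$ modulo lower Kazhdan-degree terms. For a basis element $\bar X_j\in\g_\k(m_j)$ of $\a_\k$, the Kazhdan degree of $\bar X_j^p$ equals $p(m_j+2)$ whereas that of $\bar X_j^{[p]}\in\g_\k(pm_j)$ equals only $pm_j+2$; since $p\ge 2$ the former strictly exceeds the latter, and therefore
\[
\rho_\k\big(\bar X_j^p-\bar X_j^{[p]}\big)(1_\chi)\,\equiv\,\bar X_j^p\otimes 1_\chi
\]
modulo lower Kazhdan-degree terms. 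Consequently, for $0\le a_i\le p-1$ and $c_j\in\Z_+$, the elements $\bar\Theta^{\bf a}\cdot\prod_j\rho_\k\big(\bar X_j^p-\bar X_j^{[p]}\big)^{c_j}$ applied to $1_\chi$ have pairwise distinct Kazhdan leading terms $\bar x^{\bf a}\bar X^{p{\bf c}}\otimes 1_\chi$ in the PBW basis of $Q_{e,\k}$, giving linear independence and hence injectivity of $\mu$. For surjectivity, identify $\widehat U(\g_\k,e)$ with the $\m_\k$-Whittaker subspace $\{v\in Q_{e,\k}\,|\,\,x\cd v=\chi(x)v\ \forall x\in\m_\k\}$ via $\phi\mapsto\phi(1_\chi)$ and induct on Kazhdan degree: the graded $\m_\k$-invariants in $\gr Q_{e,\k}$ are spanned by the leading monomials above, so the leading term of any Whittaker vector can be matched by an element in the image of $\mu$ and subtracted off to reduce the degree.

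The main obstacle lies in the span claim used in (iii): computing the $\m_\k$-invariants in $\gr Q_{e,\k}\cong S(\g_\k)\otimes_{S(\m_\k)}\k_\chi$ explicitly and verifying that they are freely spanned by the monomials $\bar x^{\bf a}\bar X^{p{\bf c}}\otimes 1_\chi$ with $0\le a_i\le p-1$ and $c_j\in\Z_+$. This is a graded, modular refinement of the Kostant/Skryabin-type Whittaker invariance calculation underlying the very definition of $U(\g,e)$; the new feature is precisely the appearance of the ``$p$-th power'' monomials $\bar X^{p{\bf c}}$ as additional invariants, which is responsible for the extra $Z_p(\a_\k)$-factor in the decomposition. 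Once this graded invariant computation is secured, the inductive surjectivity, the tensor-product isomorphism $\widehat U(\g_\k,e)\cong U(\g_\k,e)\otimes Z_p(\a_\k)$, and the generation claim all drop out.
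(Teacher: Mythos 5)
This paper does not prove Theorem \ref{U-hat}: the statement is imported from [\cite{P10}, 2.7] with no in-text argument, so a direct comparison is not possible. Read on its own, your outline has a reasonable architecture and part (i) is correct — the $(\Ad\lambda)$-homogeneity of the $[p]$-operation together with Jacobson's formula gives $x^{[p]}\in\bigoplus_{i\le-p}\g_\k(i)$ for $x\in\m_\k$, hence $\chi(x^{[p]})=0$, so $\rho_\k$ factors through $\k\bigl[(\chi+\m_\k^\perp)^{(1)}\bigr]\cong Z_p(\widetilde{\a}_\k)$, and the injectivity of the factored map via $Q_e^\eta\ne 0$ is also fine. The Kazhdan-degree mismatch $p(m_j+2)>pm_j+2$ in part (ii) is the right observation. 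Two technical quibbles: by (\ref{lam}) the top Kazhdan-degree part of $\bar\Theta_i(1_\chi)$ is $\bar x_i$ \emph{plus} same-degree monomials of total degree $\ge 2$, so you need to refine the filtration (say lexicographically by Kazhdan degree and then total degree) before reading off leading terms; and injectivity of $\mu$ requires independence for all $a_i\in\Z_+$, not only $0\le a_i\le p-1$ — as written you have established only the free-$\rho_\k(Z_p)$-module claim. The same disjointness of the PBW generators $\{\bar x_1,\dots,\bar x_r\}$ and $\{\bar X_1,\dots,\bar X_{d(e)}\}$ repairs both points.

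The genuine gap is the step you yourself label the main obstacle. The assertion that the graded $\m_\k$-Whittaker invariants in $\gr Q_{e,\k}$ are spanned \emph{precisely} by $\{\bar x^{\bf a}\bar X^{p{\bf c}}\}$ is the whole substance of the theorem. That the $p$-th powers $\bar X_j^p$ are new invariants is formal (derivations annihilate $p$-th powers), but ruling out \emph{other} new invariants in characteristic $p$ is not a routine refinement of the characteristic-zero Kostant/Premet computation, and you supply no argument for it. Without this, surjectivity of $\mu$ — and with it the free-basis claim, the tensor decomposition, and hence the generation statement — is unproved. A more economical route using only data already assembled in \ref{2.4}--\ref{2.5} is available: the Morita equivalence $U_\eta(\g_\k)\cong\Mat_{p^{d(e)}}\bigl(U_\eta(\g_\k,e)\bigr)$ forces $\dim_\k U_\eta(\g_\k,e)=p^{\dim\g-2d(e)}=p^r$, and combined with $U_\eta(\g_\k,e)\cong\widehat U(\g_\k,e)\otimes_{Z_p(\widetilde{\a}_\k)}\k_\eta$ and your leading-term independence, this dimension count bounds the rank of $\widehat U(\g_\k,e)$ over $Z_p(\widetilde{\a}_\k)$ from above and makes $\mu$ surjective without ever computing the graded invariants directly. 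Either that argument or the honest graded computation must be supplied; deferring both leaves the theorem unestablished.
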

Combining [\cite{P10}, Thm.~2.1(ii)] with [\cite{P10},
Lemma~2.2(iv)] it is straightforward to see that $Q_{e,\,\k}$ is a
free right $\widehat{U}(\g_\k,e)$-module with basis
$\{\bar{X}_1^{a_1}\cdots \bar{X}_{d(e)}^{a_{d(e)}}\otimes
1_\chi\,|\,\,0\le a_i\le p-1\}$ and $U_\eta(\g_\k,e)\,\cong\,
\widehat{U}(\g_\k,e)\otimes_{Z_p(\widetilde{\a}_\k)}\k_\eta$ for
every $\eta\in\chi+\m_\k^\perp$. (The algebra
$Z_p(\widetilde{\a}_\k)$ acts on $\k_\eta=\k1_\eta$ by the rule
$(x^p-x^{[p]})(1_\eta)=\eta(x)^p$ for all $x\in\widetilde{\a}_\k$.)
\subsection{}\label{2.6} From now on we fix a primitive ideal $\I$ of $U(\g)$
with $\mathcal{VA}(\I)=\overline{\O}$. The affine variety
$\mathcal{VA}(\I)$ is the zero locus in $\g^*\cong\g$ of the $(\Ad
G)$-invariant ideal $\gr\, \I$ of $S(\g)=\gr\, U(\g)$. As we
identify $\g$ with $\g^*$ by using the Killing isomorphism $\kappa$,
our assumption on $\I$ simply means that the open $({\rm
Ad}^*\,G)$-orbit of $\mathcal{VA}(\I)$ contains $\chi$. We know from
[\cite{Lo}, Thm.~1.2.2], [\cite{P10}, Thm.~4.2] and [\cite{Gi},
Thm.~4.5.2] that $\I={\rm
Ann}_{U(\g)}\big(Q_e\otimes_{U(\g,\,e)}M\big)$ for some finite
dimensional $U(\g,e)$-module $M$. We choose a $\CC$-basis basis
$E=\{m_1,\ldots,m_l\}$ of $M$ and denote by $\tilde{A}$ the
$A$-subalgebra of $\CC$ generated by the coefficients of the
coordinate vectors of all $\Theta_i(m_j)\in M$ with respect to $E$.
By construction, the ring $\tilde{A}$ is admissible and the
$\tilde{A}$-span of $E$ is a $U(\g_A,e)$-stable $\tilde{A}$-lattice
in $M$. Thus, after replacing $A$ by $\tilde{A}$ we may assume that
the lattice $M_A:=Am_1\oplus\cdots\oplus Am_l$ of $M$ is
$U(\g_A,e)$-stable. We write $\tau_A$ for the corresponding
representation of $U(\g_A,e)$ in $\End M_A$. Our discussion in
\ref{2.3} and \ref{2.5} then shows that the $\g$-module
$\widetilde{M}:=Q_e\otimes_{U(\g,\,e)}\,M$ contains a $\g_A$-stable
$A$-lattice with basis $\{X^{\bf a}\otimes m_i\,|\,\,{\bf a}\in
\Z_+^{d(e)},\ 1\le i\le l\}$; we call it $\widetilde{M}_A$. Note
that $\widetilde{M}_A\cong Q_{e,\,A}\otimes_{U(\g_A,\,e)}M_A$ as
$\g_A$-modules. For $p\in\Pi(A)$, the $\g_\k$-module
$\widetilde{M}_\k$ has $\k$-basis $\{\bar{X}^{\bf
a}\otimes\bar{m}_i\,|\,\,{\bf a}\in \Z_+^{d(e)},\ 1\le i\le l\}$,
where $\bar{m}_i=m_i\otimes 1$. Also, $\widetilde{M}_\k\cong
Q_{e,\,\k}\otimes_{U(\g_\k,\,e)}M_\k$ as $\g_\k$-modules.

\smallskip

For $1\le i,j\le l$ denote by $E_{i,j}$ the endomorphism of $M$ such
that $E_{i,j}(m_k)=\delta_{j,k}m_i$ for all $1\le k\le l$. As $M$ is
an irreducible $U(\g,e)$-module, we may assume, after enlarging $A$
further if necessary, that all $E_{i,j}$'s are in the image of
$U(\g_A,e)$ in $\End M$. Thus we may assume that for every
$p\in\Pi(A)$ the $U(\g_\k,e)$-module $M_\k$ is irreducible. We
mention that $U(\g_\k,e)$ acts on $M_\k$ via the representation
$\tau_\k=\tau_A\otimes 1$. By Theorem~\ref{U-hat},
$\widehat{U}(\g_\k,e)\cong U(\g_\k,\,e)\otimes_\k Z_p(\a_\k)$ as
$\k$-algebras. Therefore, for any linear function $\psi$ on $\a_\k$
there is a unique representation
$\widehat{\tau}_{\k,\,\psi}\colon\,\widehat{U}(\g_\k,e)\to\, \End
M_\k$ with $\widehat{\tau}_{\k,\,\psi}(x^p-x^{[p]})=\psi(x)^p{\rm
Id}$ for all $x\in\a_\k$
 whose restriction to
$U(\g_\k,e)\hookrightarrow\widehat{U}(\g_\k,e)$ coincides with
$\tau_\k$. Since the representation $\widetilde{\tau}_{\k,\,\psi}$
is irreducible and $Z_p(\widetilde{\a}_\k)$ is a central subalgebra
of $\widehat{U}(\g_\k,e)$, the linear function $\psi$ extends
uniquely to a linear function $\Psi$ on $\widetilde{\a}_\k$ such
that $\widehat{\tau}_{\k,\,\psi}(x^p-x^{[p]})=\Psi(x)^p{\rm Id}$ for
all $x\in\widetilde{\a}_\k$. As $\g_\k=\m_\k\oplus
\widetilde{\a}_\k$, we can extend $\Psi$ to a linear function on
$\g_\k$ by setting $\Psi(x)=\chi(x)$ for all $x\in\m_\k$. By
construction, $\Psi\in \chi+\m_\k^\perp$ and
$\Psi\vert_{\a_\k}=\psi$.

\smallskip

We now set
$\widetilde{M}_{\k,\,\Psi}:=\,\widetilde{M}_\k/I_{\Psi}\widetilde{M}_\k$,
a $\g_\k$-module with $p$-character $\Psi$. The definition of $\Psi$
and our discussion in \ref{2.5} show that
\begin{eqnarray*}
\widetilde{M}_{\k,\,\Psi}&\cong&
\widetilde{M}_{\k}\otimes_{Z_p(\g_\k)}\k_\Psi\,=\,
\big(Q_{e,\,\k}\otimes_{U(\g_\k,\,e)}M_\k\big)\otimes_{Z_p(\m_\k)\otimes
Z_p(\widetilde{\a}_\k)}\k_\Psi\\
&\cong&\big(Q_{e,\,\k}\otimes_{U(\g_\k,\,e)}M_\k\big)\otimes_{Z_p(\widetilde{\a}_\k)}\k_\Psi\,\cong\,
Q_{e,\,\k}\otimes_{\widehat{U}(\g_\k,\,e)}M_\k\,
\cong\,Q_e^\Psi\otimes_{U_\Psi(\g_\k,\,e)}M_\k,
\end{eqnarray*}
where we view $M_\k$ as a $\widehat{U}(\g_\k,e)$-module via the
representation $\widehat{\tau}_{\k,\,\psi}$. This implies that under
our assumptions on $A$ and $\Psi$ the $U_\Psi(\g_\k)$-module
$\widetilde{M}_{\k,\,\Psi}$ is irreducible and has dimension
$lp^{d(e)}$; see \ref{2.5} for more detail.
\smallskip
\begin{rem}\label{rrr}
One can prove that the linear functions $\Psi$ constructed in this
subsection form a single orbit under the action of the connected
unipotent subgroup $\mathcal{M}_\k$ of $G_\k$ such that $\Ad
\mathcal{M}_\k$ is generated by all linear operators $\exp \ad x$
with $x\in\m_\k$. Indeed, the group $\mathcal{M}_\k$ preserves the
left ideal $U(\g_\k)N_{\chi,\,\k}$ and hence acts on both
$Z_p(\widetilde{\a}_\k)=\rho_\k(Z_p(\g_\k))$ and
$\widehat{U}(\g_\k,e)=(U(\g_\k)/U(\g_\k)N_{\chi,\,\k})^{\ad\,\m_\k}$.
The rational action of $\mathcal{M}_\k$ on $Q_{e,\,\k}$ is obtained
by reducing modulo $\P$ the natural action on $Q_{e,\,A}$ of the
unipotent subgroup $\mathcal{M}_A$ of $G$ such that $\Ad
\mathcal{M}_A$ is generated by all inner automorphisms $\exp \ad x$
with $x\in\m_A$. From this it follows that $U(\g_\k,e)\subseteq
\widehat{U}(\g_\k,e)^{\mathcal{M}_\k}$ (one should keep in mind here
that $U(\g_\k,e)$ is generated by $\bar{\Theta}_1,\ldots,
\bar{\Theta}_r$ and $p\gg 0$). As we identify $S(\widetilde{\a}_\k)$
with $\k[\chi+\m_\k^\perp]$, we may regard the
$\mathcal{M}_\k$-algebra $Z_p(\widetilde{\a}_\k)$ as the coordinate
algebra of the Frobenius twist $(\chi+\m_\k^\perp)^{(1)}\subset
(\g_\k^*)^{(1)}$ of $\chi+\m_\k^\perp$; see [\cite{P10'}, 3.4] for
more detail. The natural action of $\mathcal{M}_\k$  on
$(\chi+\m_\k^\perp)^{(1)}$ is a Frobenius twist of the coadjoint
action of $\mathcal{M}_\k$ on $\chi+\m_\k^\perp$. By
Theorem~\ref{U-hat}, $\widehat{U}(\g_\k,e)$ is a free
$Z_p(\widetilde{\a}_\k)$-module with basis consisting of elements
from $U(\g_\k,e)$. From this we deduce that
$\widehat{U}(\g_\k,e)^{\mathcal{M}_\k}=U(\g_\k,e)$ and
$Z_p(\widetilde{\a}_\k)\cap U(\g_\k,e)=
Z_p(\widetilde{\a}_\k)^{\mathcal{M}_\k}$. On the other hand,
[\cite{P10}, Lemma~3.2] entails that each fibre of the categorical
quotient $\chi+\m_\k^\perp\to(\chi+\m_\k^\perp)/\!\!/\mathcal{M}_\k$
induced by inclusion
$\k[\chi+\m_\k^\perp]^{\mathcal{M}_\k}\hookrightarrow
\k[\chi+\m_\k^\perp]$  is a single $\mathcal{M}_\k$-orbit. As the
maximal spectrum of $Z_p(\widetilde{\a}_\k)$ is isomorphic to
$(\chi+\m_\k^\perp)^{(1)}$ as $\mathcal{M}_\k$-varieties by our
earlier remarks, each fibre of the categorical quotient
$$\alpha\colon\,{\rm Specm}\,Z_p(\widetilde{\a}_\k)\longrightarrow\,
\big({\rm Specm}\,Z_p(\widetilde{\a}_\k)\big)/\!\!/\mathcal{M}_\k$$
is a single $\mathcal{M}_\k$-orbit as well. Now let $\Psi_i$,
$i=1,2$, be two linear functions as above, denote by $\psi_i$ the
restriction of $\Psi_i$ to $\a_\k$, and consider the corresponding
representations
$\widehat{\tau}_{\k,\,\psi_i}\colon\,\widehat{U}(\g_\k,e)\to \End
M_\k$. Since $\widehat{\tau}_{\k,\,\psi_1}$ and
$\widehat{\tau}_{\k,\,\psi_2}$ agree on
$Z_p(\widetilde{\a}_\k)^{\mathcal{M}_\k}\subset U(\g_\k,e)$, it must
be that $\alpha(\Psi_1)=\alpha(\Psi_2)$. But then $\Psi_1$ and
$\Psi_2$ are in the same $\mathcal{M}_\k$-orbit, as claimed.

\smallskip

The above discussion in conjunction with [\cite{P10}, Lemma~3.2]
also yields that for $p\gg 0$ the central subalgebra
$Z_p(\widetilde{\a})^{\mathcal{M}_\k}\cong
\k[(\chi+\m_\k^\perp)^{(1)}]^{\mathcal{M}_\k}$ of $U(\g_\k,e)$ is
isomorphic to the function algebra on the Frobenius twist of the
Slodowy slice $\mathbb{S}_\chi:=\chi+\kappa({\rm Ker}\,\ad f)$,
where $\kappa\colon\,\g_\k\stackrel{\sim}{\longrightarrow}\g_\k^*$
is the Killing isomorphism associated with the bilinear form
$(\,\cdot\,,\,\cdot\,)$. Together with [\cite{P10}, Thm.~2.1] this
shows that the relationship between $U_\eta(\g_\k,e)$ and
$U(\g_\k,e)$ is very similar to that between $U_\eta(\g_\k)$ and
$U(\g_\k)$. More precisely, one embeds $\k[(\mathbb{S}_\chi)^{(1)}]$
into $U(\g_\k,e)$ as an analogue of the $p$-centre $Z_p(\g_\k)$ (so
that $U(\g_\k,e)$ is a free $\k[(\mathbb{S}_\chi)^{(1)}]$-module of
rank $p^r$) and then obtains $U_\eta(\g_\k,e)$ from $U(\g_\k,e)$ by
tensoring the latter over $\k[(\mathbb{S}_\chi)^{(1)}]$ by a
suitable one-dimensional representation of
$\k[(\mathbb{S}_\chi)^{(1)}]$.
\end{rem}
\subsection{}\label{2.7}
Put $\I_A:={\rm Ann}_{U(\g_A)}\,\widetilde{M}_A$ and denote by ${\rm
gr}(\I_A)$ the corresponding graded ideal of $S(\g_A)$. Define
$R:=U(\g)/\I$, ${\rm gr}(R):=S(\g)/{\rm gr}(\I)$,
$R_A:=U(\g_A)/\I_A$, and ${\rm gr}(R_A)=S(\g_A)/{\rm gr}(\I_A)$.
Clearly, ${\rm gr}(R_A)=\bigoplus_{n\ge 0}({\rm gr}(R_A))(n)$ is a
finitely generated graded $A$-algebra and each $({\rm gr}(R_A))(n)$
is a finitely generated $A$-module. Also, $A$ is a commutative
Noetherian domain. If $b\in A\setminus\{0\}$, then ${\rm
gr}(\I_{A[b^{-1}]})={\rm gr}(\I_A)\otimes_A A[b^{-1}]$ and
\begin{eqnarray*}
{\rm gr}(R_{A[b^{-1}]})&=&S(\g_{A[b^{-1}]})/{\rm
gr}(\I_{A[b^{-1}]})\cong \big(S(\g_A)\otimes_A
A[b^{-1}]\big)/\big({\rm gr}(\I_A)\otimes_A A[b^{-1}]\big)\\
&\cong& {\rm gr}(R_A)\otimes_A A[b^{-1}];
\end{eqnarray*}
see [\cite{Bo}, Ch.~II, 2.4], for example. Since ${\rm
gr}(R)=\bigoplus_{n\ge 0}\,({\rm gr}(R))(n)$ is a graded Noetherian
algebra of Krull dimension $2d(e)=\dim\O$ with $({\rm
gr}(R))(0)=\CC$, we have that $2d(e)=\dim {\rm
gr}(R)=1+\deg\,P_R(t)$, where $P_{{\rm gr}(R)}(t)$ is the Hilbert
polynomial of ${\rm gr}(R)$; see [\cite{Eis}, Corollary~13.7].

\smallskip

Denote by $F$ the quotient field of $A$. Since ${\rm gr}(R_F):={\rm
gr}(R_A)\otimes_A F$ is a finitely generated algebra over a field,
the Noether Normalisation Theorem says that there exist homogeneous,
algebraically independent $y_1,\ldots, y_{2d(e)}\in{\rm gr}(R)_F$,
such that ${\rm gr}(R_F)$ is a finitely generated module over its
graded polynomial subalgebra $F[y_1,\ldots, y_{2d(e)}]$; see
[\cite{Eis}, Thm.~13.3]. Let $v_1,\ldots, v_D$ be a generating set
of the $F[y_1,\ldots, y_{2d(e)}]$-module ${\rm gr}(R_F)$ and let
$r_1,\ldots, r_{N}$ be a generating set of the $A$-algebra ${\rm
gr}(R_A)$. Then
\begin{eqnarray*}
v_i\cdot v_j&=&\textstyle{\sum}_{k=1}^D \,p_{i,j}^k(y_1,\ldots,
y_{d(e)})v_k\ \ \qquad(1\le
i,j\le D)\\
r_i&=&\textstyle{\sum}_{j=1}^{D}\,q_{i,j}(y_1,\ldots, y_{d(e)}) v_j\
\ \,\qquad(1\le i\le N)
\end{eqnarray*} for some polynomials $p_{i,j}^k,\, q_{i,j}\in
F[X_1,\ldots, X_{2d(e)}].$ The algebra ${\rm gr}(R_A)$ contains an
$F$-basis of ${\rm gr}(R_F)$. The coordinate vectors of the $r_i$'s,
$y_i$'s and $v_i$'s relative to this basis and the coefficients of
the polynomials $q_{i,j}$ and $p_{i,j}^k$ involve only finitely many
scalars in $F$. Replacing $A$ by $A[b^{-1}]$ for a suitable $0\ne
b\in A$ if necessary, we may assume that all $y_i$ and $v_i$ are in
${\rm gr}(R_A)$ and all $p_{i,j}^k$ and $q_{i,j}$ are in
$A[X_1,\ldots,X_{2d(e)}]$. In conjunction with our earlier remarks
this shows that no generality will be lost by assuming that
\begin{equation}\label{f-gen}{\rm gr}(R_A)\,=\,A[y_1,\ldots,
y_{2d(e)}]v_1+\cdots+A[y_1,\ldots,y_{2d(e)}]v_D
\end{equation} is a finitely generated module over the polynomial
algebra $A[y_1,\ldots, y_{2d(e)}]$.
\smallskip

Since ${\rm gr}(R_A)$ is a finitely generated $A[y_1,\ldots,
y_{d(e)}]$-module and $A$ is a Noetherian domain, a graded version
of the Generic Freeness Lemma shows that there exists a nonzero
element $a_1\in A$ such that each  $\big({\rm
gr}(R_A)(n)\big)[a_1^{-1}]$ is a free $A[a_1^{-1}]$-module of finite
rank; see (the proof of) Theorem~14.4 in [\cite{Eis}]. Since
$\big({\rm gr}(R_A)(n)\big)[a_1^{-1}]\cong \big({\rm gr}
(R_{A[a_1^{-1}]})\big)(n)$ for all $n$ by our earlier remarks, we
see that there exists an admissible ring $A\subset \CC$ such that
all graded components of ${\rm gr}(R_A)$ are free $A$-modules of
finite rank.

\smallskip

Since $S(\g_A)$ is a finitely generated $A$-algebra, we can also
apply the proof of Theorem~14.4 in [\cite{Eis}] to the graded ideal
${\rm gr}(\I_A)$ of $S(\g_A)$ to deduce that there exists a nonzero
$a_2\in A$ such that all graded components of $\big({\rm
gr}(\I_A)\big)[a_2^{-1}]$ are free $A[a_2^{-1}]$-modules of finite
rank. As $\big({\rm gr}(\I_A)\big)[a_2^{-1}]\cong{\rm
gr}(\I_{A[a_2^{-1}]})$ by [\cite{Bo}, Ch.~II, 2.4], we may (and we
will) assume that all graded components of  ${\rm gr}(\I_A)$ are
free $A$-modules of finite rank. Using the standard filtered-graded
techniques we now obtain that the $A$-modules $\I_A$ and $R_A$ are
free as well.

\subsection{}\label{2.8}
Note that $\widetilde{M}_F=\widetilde{M}_A\otimes_AF$ is a module
over the split Lie algebra $\g_F$. Since $\widetilde{M}\cong
\widetilde{M}_F\otimes_F\CC$, each subspace $\I\cap U_k(\g)$ is
defined over $F$ (here $U_k(\g)$ stands for the $k$th component of
the canonical filtration of $U(\g)$).  Since the algebra $U(\g)$ is
Noetherian, the ideal $\I$ is generated by its $F$-subspace
 $\I_{F,\,N}:=\,U_{N}(\g_F)\cap\I$. Since $\I$ is a
two-sided ideal of $U(\g)$, all subspaces $\I\cap U_k(\g)$ are
invariant under the adjoint action of $G$ on $U(\g)$. Hence the
$F$-subspace $\I_{F,\,N}$ is invariant under the adjoint action of
the distribution algebra $U_F:=U_\Z\otimes_\Z F$. Since
$\h_F:=\h\cap\g_F$ is a split Cartan subalgebra of $\g_F$, the
adjoint $\g_F$-module $\I_{F,\,N}$ decomposes into a finite direct
sum of absolutely irreducible $\g_F$-modules with integral dominant
highest weights. Therefore, the $\g_F$-module $\I_{F,\,N}$ possesses
a $\Z$-form invariant under the adjoint action of the Kostant
$\Z$-form $U_{\mathbb Z}$; we call it $\I_{\Z,\,N}$.

\smallskip

Let $\{u_i\,|\,\,i\in I\}$ be any basis of the free $\Z$-module
$\I_{\Z,\,N}$. Expressing the $u_i$ via the PBW basis of $U(\g_F)$
associated with the Chevalley basis $\mathcal B$ involves only
finitely many scalars in $F$. Enlarging $A$ further if need be we
may assume that all $u_i$ are in $U(\g_A)$ and hence that the ideal
$\I_A$ of $U(\g_A)$ is invariant under the natural action of the
Hopf $\Z$-algebra $U_\Z$ (one should keep in mind here that the
$\Z$-algebra $U_\Z$ is generated by all $e_\gamma^n/n!$ with
$\gamma\in\Phi$ and $n\in\Z_+$). Thus, from now on we may assume
that for any maximal ideal $\P$ of $A$ the two-sided ideal
$\I_\k:=\I_A\otimes_A\k_\P$ of $U(\g_\k)$ is stable under the
adjoint action of the simple algebraic $\k$-group $G_\k$ with
hyperalgebra $U_\k=U_\Z\otimes_\Z\k$.

\section{\bf Introducing certain finite subsets of regular elements in
$R$}
\subsection{}\label{3.1}
Let ${\mathcal B}=\{g_1,\ldots, g_n\}$ be our Chevalley basis of
$\g_\Z$ and identify ${\mathcal B}$ with its image in
$R=U(\g)/\mathcal{I}$. Denote by $R_k$ the $k$th component of the
filtration of $R$ induced by the canonical filtration of $U(\g)$ and
let $S$ be the Ore set of all regular elements in $R$. Since
${\mathcal Q}(R)=S^{-1}R\cong {\rm Mat}_{l'}({\mathcal D}_M)$, where
$l'={\rm rk}(R)$, there exists a unital subalgebra $\mathfrak C$ in
$\mathcal{Q}(R)$ isomorphic to ${\rm Mat}_{l'}(\CC)$ and such that
${\mathcal Q}(R)\cong \mathfrak{C}\otimes \mathfrak{D}$, where
$\mathfrak{D}\cong {\mathcal D}_M$ is the centraliser of
$\mathfrak{C}$ in $\mathcal{Q}(R)$.

\smallskip

In this section, we are going to describe an algorithmic procedure
that will produce at the end certain finite subsets $X\subset R$ and
$Y\subset S$. The subalgebra of ${\mathcal Q}(R)$ generated by $X$
and by $Y\cup Y^{-1}$ will contain the Chevalley basis
$\mathcal{B}$, a fixed set of matrix units of $\mathfrak{C}$ and a
generating set of the skew-field $\mathfrak{C}$. The actual form of
the elements in $X$ and $Y$ will be of no importance for us, but in
the next section we will rely on the fact that our procedure is
finite and each of its steps is reversible.

\smallskip

Fix a set $\{e_{ij}\,|\,\,1\le i,j\le l'\}$ of matrix units in
$\mathfrak{C}$, so that
\begin{eqnarray}\label{a}
e_{ij}e_{tk}&=&\delta_{jt}e_{ik}\qquad \qquad (1\le i,j,t,k\le l');\label{a}\\
\textstyle{\sum}_{i=1}^{l'}\,e_{ii}&=&1.\label{b}
\end{eqnarray}
There exist $s_{ij}, s'_{ij}\in S$ and $E_{ij}, E_{ij}'\in R$ such
that
\begin{eqnarray}\label{0}
s_{ij}^{-1}E_{ij}\,=\,e_{ij}\,=\,E_{ij}'(s_{ij}')^{-1}.
\end{eqnarray}
 Then in $R$
we have the following relations
\begin{eqnarray}\label{00}
E_{ij}s_{ij}'\,=\,s_{ij}E_{ij}'\qquad\qquad\quad(1\le i,j\le l').
\end{eqnarray}
As $\mathcal{Q}(R)=\mathfrak{C}\otimes \mathfrak{D}$, there exist
$c_{ij}^k\in \mathcal{Q}(R)$, where $1\le k\le n$, such that
\begin{eqnarray}
g_k&=&\textstyle{\sum}_{i,j}\,e_{ij}c_{ij}^k\qquad\qquad(1\le k\le n);\label{d}\\
c_{ij}^ke_{th}&=&e_{th}c_{ij}^k\qquad\qquad\qquad(1\le i,j,t,h\le
l';\ 1\le k\le n).\label{e}
\end{eqnarray}
For each $k\le l'$ we can find $a^k_{ij}\in S$ and $C_{ij}^k\in R$
such that $c_{ij}^k=(a_{ij}^k)^{-1}C_{ij}^k$. Since $S$ is an Ore
set, there are $r_{ij,\,tk}, r^k_{ij,\,th}, a^k_{ij,\,th}\in S$ and
$E_{ij,\,tk}, E^k_{ij,\,th}, C^k_{ij,\,th}\in R$ such that
\begin{eqnarray}
r_{ij,\,tk}E_{ij}&=&E_{ij,\,tk}s_{tk}\qquad\qquad\ \, (1\le
i,j,t,k\le
l');\label{f}\\
r^k_{ij,\,th}C_{ij}^k&=&E^k_{ij,\,th}s_{th}\qquad\qquad\ \, (1\le
i,j,t,h\le l',\ 1\le k\le l');\label{g}\\
C_{ij}^ka^k_{ij,\,th}&=&a_{ij}^ks_{th}'C^k_{ij,\,th}\qquad\quad\
\,(1\le i,j,t,h\le l',\ 1\le k\le l').\label{h}
\end{eqnarray}
Since
$s_{ij}^{-1}E_{ij}s^{-1}_{tk}E_{tk}\,=\,\delta_{jt}E'_{ik}(s'_{ik})^{-1}$
by (\ref{a}) and (\ref{00}), applying (\ref{f}) we obtain that
$s_{ij}^{-1}r_{ij,\,tk}^{-1}E_{ij,
tk}E_{tk}\,=\,\delta_{jt}E'_{ik}(s'_{ik})^{-1}$. This yields
\begin{eqnarray}\label{i}
E_{ij,\,tk}E_{ik}s'_{ik}\,=\,\delta_{jt}r_{ij,\,tk}s_{ij}E'_{ik}\qquad\qquad\
(1\le i,j,t,k\le l').
\end{eqnarray}
Similarly, since
$(a_{ij}^k)^{-1}C_{ij}^ks_{th}^{-1}E_{th}\,=\,E_{th}'(s'_{th})^{-1}(a_{ij}^k)^{-1}C_{ij}^k$
by (\ref{e}) and (\ref{00}), applying (\ref{g}) and (\ref{h}) yields
$(a_{ij}^k)^{-1}(r^k_{ij,\,th})^{-1}E_{ij,\,th}^kE_{th}\,=\,E_{th}'C_{ij,\,th}^k(a_{ij,\,th}^k)^{-1}$.
We thus get
\begin{eqnarray}\label{j}
\quad\
E_{ij,\,th}^kE_{th}a_{ij,th}^k\,=\,r^k_{ij,\,th}a_{ij}^kE_{th}'C_{ij,\,th}^k\qquad\quad(1\le
i,j,t,h\le l',\ 1\le k\le l').
\end{eqnarray}

Recall that $1\,=\,\sum_{i=1}^{l'}e_{ii}\,=\,
\sum_{i=1}^{l'}s_{ii}^{-1}E_{ii}.$ Multiplying both sides by
$s_{11}$ on the left we get
\begin{eqnarray}\label{k}
s_{11}\,=\,E_{11}+\textstyle{\sum}_{i=2}^{l'}\,s_{11}s_{ii}^{-1}E_{ii}.
\end{eqnarray}
There exist $s_{1,2}\in S$ and $q_{2}\in R$ such that
$s_{1,2}s_{11}=q_{2}s_{22}.$ Multiplying both sides of (\ref{k}) by
$s_{1,2}$ on the left we then obtain
\begin{eqnarray}\label{l}
\qquad s_{1,2}s_{11}\,=\,s_{1,2}E_{11}+q_{2}E_{22}+
\textstyle{\sum}_{i=3}^{l'}\,s_{1,2}s_{11}s_{ii}^{-1}E_{ii}.
\end{eqnarray}
For $3\le k\le l'$, we select (recursively) some $s_{1,\ldots,k}\in
S$ and $q_{k}\in R$ such that
\begin{eqnarray}\label{m}
\textstyle{\prod}_{i=1}^k s_{1,\ldots, k-i+1}\,=\,q_{k}s_{kk}.
\end{eqnarray}
For convenience, we set $q_{1}=1$. At the end of the process started
with (\ref{k}) and (\ref{l}) we get rid of all denominators and
arrive at the relation
\begin{eqnarray}\label{n}
\prod_{k=1}^{l'}s_{1,\ldots,l'-k+1}\,=\,
\sum_{k=1}^{l'}\Big(\prod_{i=1}^{l'-k}s_{1,\ldots,l'-i+1}\Big)q_{k}E_{kk}
\end{eqnarray}
which holds in $R$.

\smallskip

Let $p(1),\ldots, p(l'^2)$ be all elements in the lexicographically
ordered set $\{(i,j)\,|\,\,1\le i,j\le l'^2\}$ and denote by
$e_{p(k)}$, $E_{p(k)}$ and $s_{p(k)}$ the corresponding elements in
$R$. Since $e_{ij}$ commutes with $c_{ij}^k$ we can rewrite
(\ref{d}) as
\begin{eqnarray}\label{o}
g_k\,=\,\textstyle{\sum}_{i,j}\,(a_{ij}^k)^{-1}C_{ij}^ks_{ij}^{-1}E_{ij}\qquad\qquad(1\le
k\le n).
\end{eqnarray}
For $1\le k\le l'$, there exist $D_{ij}^k\in R$ and $s_{ij}^k\in S$
such that
\begin{eqnarray}\label{p}\qquad
D_{ij}^ks_{ij}\,=\,s_{ij}^kC_{ij}^k\ \qquad\quad(1\le i,j\le l').
\end{eqnarray}
Then, setting $T_{ij}^k\,:=\,D_{ij}^kE_{ij}$ and
$s_{ij;\,k}\,:=\,s_{ij}^ka_{ij}^k$, we can rewrite (\ref{o}) as
follows:
\begin{eqnarray}\label{r}
g_k\,=\,\textstyle{\sum}_{i,j}\,s_{ij;\,k}^{-1}T_{ij}^k\,=\,
\textstyle{\sum}_{i=1}^{l'^2}s_{p(i);\,k}^{-1}T_{p(i)}^k\qquad\qquad(1\le
k\le n).
\end{eqnarray}
Multiplying both sides of (\ref{r}) by $s_{p(1);\,k}$ on the left we
get
\begin{eqnarray}\label{q}
s_{p(1);\,k}\cdot
g_k\,=\,T^k_{p(1)}+\textstyle{\sum}_{i=2}^{l'^2}\,s_{p(1);\,k}s_{p(i);\,k}^{-1}T_{p(i)}.
\end{eqnarray}
There are $s_{p(1),p(2);\,k}\in S$ and $q_{p(2)}^k\in R$ such that
$s_{p(1),p(2);\,k}s_{p(1);\,k}=q_{p(2)}^k s_{p(2);\,k}.$ Multiplying
both sides of (\ref{q}) by $s_{p(1),p(2);\,k}$ on the left we get $$
(s_{p(1),p(2);\,k}s_{p(1);\,k})g_k\,=\,s_{p(1),p(2);\,k}T_{p(1)}^k+q^k_{p(2)}T^k_{p(2)}+
\textstyle{\sum}_{i=3}^{l'^2}\,s_{p(1),p(2);\,k}s_{p(1);\,k}s_{p(i);\,k}^{-1}T^k_{p(i)}.
$$
For $3\le j\le l'^2$, we choose (recursively) some
$s_{p(1),\ldots,p(j);\,k}\in S$ and $q^k_{p(j)}\in R$ such that
\begin{eqnarray}\label{r'}
\textstyle{\prod}_{i=1}^j s_{p(1),\ldots,
p(j-i+1);\,k}\,=\,q^k_{p(j)}s_{p(j);\,k},
\end{eqnarray}
and set $q^k_{p(1)}=1$. As before, at the end of the process just
started we arrive at the relations
\begin{eqnarray}\label{s}
\Big(\prod_{j=1}^{l'^2}s_{p(1),\ldots,p(l'^2-j+1);\,k}\Big)g_k\,=\,
\sum_{j=1}^{l'^2}\Big(\prod_{i=1}^{l'^2-j}s_{p(1),\ldots,p(l'^2-i+1);\,k}\Big)q^k_{p(j)}T^k_{p(j)}
\end{eqnarray}
which hold in $R$, where $1\le k\le n$.

\smallskip

We now denote by $X$ the set of all elements
\begin{eqnarray*}
&&E_{ij}, E_{ij}',C_{ij}^k, E_{ij,tk}, E_{ij,th}^k, C_{ij,th}^k,
q_k, D_{ij}^k, T_{ij}^k, q_{p(j);\,k}
\end{eqnarray*}
 and by $Y$ the set of all elements
\begin{eqnarray*}
&&s_{ij}, s'_{ij},a_{ij}^k, r_{ij,tk}, r_{ij,th}^k, a_{ij,th}^k,
s_{1,\ldots,k}, s_{ij}^k, s_{ij;\,k},
s_{p(1),\ldots,p(j);\,k}.\end{eqnarray*}
\subsection{}\label{3.2} In this subsection we assume that $\mathfrak{D}$ is a Weyl
skew-field, more precisely, $\mathfrak{D}\cong \mathcal{Q}({\mathbf
A}_{d(e)}(\CC))$. We follow closely the exposition in [\cite{P10'},
Sect.~2] and adopt (with some minor modifications) the notation
introduced there.

\smallskip

Set $d:=d(e)$. If a pair $(a,b)\in\{(i,j)\,|\,\,1\le i,j\le l'\}$
occupies the $k$th place in our lexicographical ordering, then we
write $c_{p(k)}^s$, $a_{p(k)}^s$ and $C_{p(k)}^s$ for $c_{ab}^s$,
$a_{ab}^s$ and $C_{ab}^s$, respectively. There exist $w_1,\ldots,
w_{2d}\in \mathfrak{D}$ such that
\begin{eqnarray}
[w_i,w_j]\,=\,[w_{d+i},w_{d+j}]&=&0\,\,\, \qquad \quad\quad(1\le i,j\le d);\label{1}\\
\,[w_i,w_{d+j}]&=&\delta_{i,j}\,\, \qquad \,\, \quad(1\le i,j\le d);\label{2}\\
Q^s_{p(k)}\cdot c^s_{p(k)}&=&P^s_{p(k)},\qquad \, \,\,(1\le k\le
l'^2;\ 1\le s\le n)\label{3}
\end{eqnarray}
for some nonzero polynomials $P^s_{p(k)},Q^s_{p(k)}$ in $w_1,\ldots,
w_{2d}$ with coefficients in $\CC$. (One should keep in mind here
that the monomials $w_1^{a_1}w_2^{a_2}\cdots w_{2d}^{a_{2d}}$ with
$a_i\in\Z_+$ form a basis of the $\CC$-subalgebra of $D$ generated
by $w_1,\ldots,w_{2d}$.)

\smallskip

Since every nonzero element of $\mathfrak{D}$ is regular in
$\mathcal{Q}(R)$, there exist $Q^s_{1;\,p(k)}, Q^s_{2;\,p(k)}\in S$
such that
\begin{eqnarray}\label{qq}
Q^s_{p(k)}Q^s_{1;\,p(k)}\,=\,Q^s_{2;\,p(k)}\qquad \, \,\,(1\le k\le
l'^2;\ 1\le s\le n).
\end{eqnarray}
Since $w_i=v_i^{-1}u_i$ for some elements $v_i\in S$ and $u_i\in R$,
we can rewrite (\ref{1}) and (\ref{2}) as follows:
\begin{eqnarray}
v_i^{-1}u_i\cdot v_j^{-1}u_j&=&v_j^{-1}u_j\cdot v_i^{-1}u_i;\label{1'}\\
v_{d+i}^{-1}u_{d+i}\cdot v_{d+j}^{-1}u_{d+j}&=&v_{d+j}^{-1}u_{d+j}\cdot v_{d+i}^{-1}u_{d+i};\label{2'}\\
v_{i}^{-1}u_{i}\cdot v_{d+j}^{-1}u_{d+j}-v_{d+j}^{-1}u_{d+j}\cdot
v_{i}^{-1}u_{i}&= &\delta_{i,j}\qquad\qquad \quad\quad \ (1\le
i,j\le d). \label{3'}
\end{eqnarray}
As $S$ is an Ore set, there are $v_{i,j}\in S$ and $u_{i,j}\in R$
such that
\begin{equation}\label{uv}
v_{i,j}u_i=u_{i,j}v_j\qquad\qquad\quad(1\le i,j\le 2d).
\end{equation}
Thus we can rewrite (\ref{1'}), (\ref{2'}) and (\ref{3'}) in the
form
\begin{eqnarray}
\quad v_i^{-1}v_{i,j}^{-1}\cdot
u_{i,j}u_j&=&v_j^{-1}v_{j,i}^{-1}\cdot u_{j,i}u_i
\qquad(1\le i,j\le d\mbox{ or } d\le i,j\le 2d)\label{1''}\\
\quad v_{i}^{-1}v_{i,d+j}^{-1}\cdot
u_{i,d+j}u_{d+j}&=&\delta_{ij}+v_{d+j}^{-1}v_{d+j,i}^{-1}\cdot
u_{d+j,i}u_{d+i}\qquad(1\le i,j\le d).\label{2''}
\end{eqnarray}
There exist $b_{i,j}\in S$ and  $b'_{i,j}\in R$ such that
\begin{equation}\label{10}
b_{i,j}v_{i,j}v_i=b'_{i,j}v_{j,i}v_j\qquad\qquad\quad(1\le i,j\le
2d).
\end{equation}
Since $v_{i,j}v_i(v_{j,i}v_j)^{-1}\,=\,b_{i,j}^{-1}b'_{i,j},$ we see
that (\ref{1''}) and (\ref{2''}) give rise to the relations
\begin{eqnarray}
\quad b_{i,j}u_{i,j}u_j&=&b'_{i,j}u_{j,i}u_i
\qquad(1\le i,j\le d\mbox{ or } d\le i,j\le 2d)\label{1'''}\\
\qquad b_{i,d+j}u_{i,d+j}u_{d+j}&=&\delta_{ij}b_{i,d+j}v_{i,d+j}v_i+
b'_{i,d+j} u_{d+j,i}u_{i}\qquad(1\le i,j\le d)\label{2'''}
\end{eqnarray}
which hold in $R$.

\smallskip

For an $m$-tuple ${\bf i}=(i(1),i(2),\ldots,i(m))$ with $1\le
i(1)\le i(2)\le \cdots\le i(m)\le 2d$ and $m\ge 3$ we select
(recursively) some $u_{i(1),\ldots, i(k)}\in R$ and
$v_{i(1),\ldots,i(k)}\in S$, where $3\le k\le m$, such that
\begin{equation}\label{13}
v_{i(1),\ldots,
i(k)}u_{i(1),\ldots,i(k-1)}u_{i(k-1)}=u_{i(1),\ldots, i(k)}v_{i(k)}.
\end{equation}
Write $w^{\mathbf i}:=w_{i(1)}\cdot w_{i(2)}\cdot\ldots\cdot
w_{i(m)} =\prod_{k=1}^m\,v_{i(k)}^{-1}u_{i(k)}.$ Then
\begin{eqnarray*}
w^{\mathbf i}&=&v_{i(1)}^{-1}u_{i(1)}\cdot
v_{i(2)}^{-1}u_{i(2)}\cdot\prod_{k=3}^m\,v_{i(k)}^{-1}u_{i(k)}\\
&=&v_{i(1)}^{-1}v_{i(1),i(2)}^{-1}u_{i(1),i(2)}u_{i(2)}\cdot
v_{i(3)}^{-1}u_{i(3)}\cdot
\prod_{k=4}^m\,v_{i(k)}^{-1}u_{i(k)}\\
&=&v_{i(1)}^{-1}v_{i(1),i(2)}^{-1}v_{i(1),i(2),i(3)}^{-1}
u_{i(1),i(2),i(3)}u_{i(3)}\cdot
\prod_{k=4}^m\,v_{i(k)}^{-1}u_{i(k)}\\
&=&\cdots\,=\,\Big(\prod_{k=1}^m\,v_{i(1),\ldots, i(m-k+1)}
\Big)^{-1}\cdot u_{i(1),\ldots, i(m)}u_{i(m)}.
\end{eqnarray*}
Then we set $v_{\bf i}:={\prod}_{k=1}^m\,v_{i(1),\ldots, i(m-k+1)}$,
an element of $S$, and $u_{\bf i}:=u_{i(1),\ldots, i(m)}u_{i(m)}$,
an element of $R$.

\smallskip

Let $\{{\bf i}(1),\ldots,{\bf i}(N)\}$ be the set of all tuples as
above with $\sum_{\ell=1}^Ni(\ell)\le \Delta$, where
$\Delta=\max\big\{\deg P^s_{p(k)},\,\deg Q^s_{p(k)}\,|\,\,1\le k\le
l'^2,\,1\le s\le n\big\}$. Clearly,
$P^s_{p(k)}=\sum_{j=1}^N\,\lambda^s_{j,k}w^{{\bf i}(j)}$ and
$Q^s_{p(k)}=\sum_{j=1}^N\,\mu^s_{j,k}w^{{\bf i}(j)}$ for some
$\lambda^s_{j,k},\mu^s_{j,k}\in \CC$, where $1\le k\le l'^2$ and
$1\le s\le n$. By the above, we have that
$P^s_{p(k)}=\sum_{j=1}^N\,\lambda^s_{j,k}\,v_{{\bf i}(j)}^{-1}
u_{{\bf i}(j)}$ and $Q^s_{p(k)}=\sum_{i=1}^N\, \mu^s_{j,k}\,v_{{\bf
i}(j)}^{-1}u_{{\bf i}(j)}$.

Set $v_{{\bf i}(j)}(0):=v_{{\bf i}(j)}$ and $u_{{\bf
i}(j)}(0)=u_{{\bf i}(j)}$. For each pair $(j,t)$ of positive
integers satisfying $N\ge j>t>0$ we select (recursively) some
$v_{{\bf i}(j)}(t)\in S$ and $u_{{\bf i}(j)}(t)\in R$ such that
\begin{eqnarray}\label{06}
v_{{\bf i}(j)}(t)v_{{\bf i}(t)}(t-1)\,=\,u_{{\bf i}(j)}(t)v_{{\bf
i}(j)}(t-1).
\end{eqnarray}
Multiplying both sides of (\ref{3}) by $v_{{\bf i}(1)}$ on the left
and applying (\ref{06}) with $t=1$ we obtain that
\begin{eqnarray*}
v_{{\bf i}(1)}P_{p(k)}^s&=&\lambda^s_{1,k}u_{{\bf i}(1)}
+\sum_{j=2}^N\lambda^s_{j,k}v_{{\bf i}(1)}v_{{\bf i}(j)}^{-1}
u_{{\bf i}(j)}\\
&=&\lambda^s_{1,k}u_{{\bf i}(1)} +\sum_{j=2}^N\lambda^s_{j,k}v_{{\bf
i}(j)}(1)^{-1}u_{{\bf i}(j)}(1) u_{{\bf i}(j)}.
\end{eqnarray*}
Multiplying both sides of this equality by $v_{{\bf i}(2)}(1)$ on
the left  and applying (\ref{06}) with $t=2$ we get
\begin{eqnarray*}
v_{{\bf i}(2)}(1)v_{{\bf i}(1)}P_{p(k)}^s&=&\lambda^s_{1,k}v_{{\bf
i}(2)}(1)u_{{\bf i}(1)} +\lambda^s_{2,k}u_{{\bf i}(2)}(1)u_{{\bf
i}(1)}\\ &+&\sum_{j=3}^N\lambda^s_{j,k}v_{{\bf i}(j)}(2)^{-1}u_{{\bf
i}(j)}(2)u_{{\bf i}(j)}(1) u_{{\bf i}(j)}.
\end{eqnarray*}
Repeating this process $N$ times we arrive at the relation
\begin{equation}
\ \ \Big(\prod_{\ell=1}^{N}v_{{\bf
i}(N-\ell+1)}\Big)P_{p(k)}^s\,=\,\,\sum_{j=1}^N\lambda^s_{j,k}\cdot\Big(\prod_{\ell=1}^{N-j}v_{{\bf
i}(N-\ell+1)}(N-\ell)\cdot\prod_{\ell=1}^{j} u_{{\bf
i}(j-\ell+1)}(j-\ell)\Big) \label{18}
\end{equation}
which holds in $R$  (at the $\ell$-th step of the process we
multiply the preceding equality by $v_{{\bf i}(\ell)}(\ell-1)$ on
the left and then apply (\ref{06}) with $t=\ell$). Similarly, we
have that
\begin{equation}
\ \ \Big(\prod_{\ell=1}^{N}v_{{\bf
i}(N-\ell+1)}\Big)Q_{p(k)}^s\,=\,\,\sum_{j=1}^N\mu^s_{j,k}\cdot\Big(\prod_{\ell=1}^{N-j}v_{{\bf
i}(N-\ell+1)}(N-\ell)\cdot\prod_{\ell=1}^{j} u_{{\bf
i}(j-\ell+1)}(j-\ell)\Big). \label{18'}
\end{equation}
We denote the left-hand sides of (\ref{18}) and (\ref{18'}) by
$\widetilde{P}_{p(k)}^s$ and $\widetilde{Q}_{p(k)}^s$, respectively,
and set $\tilde{v}:=\prod_{\ell=1}^{N}v_{{\bf i}(N-\ell+1)}$. Note
that $\tilde{v}\in S$. Then
\begin{equation}\label{18''}
 \tilde{v}^{-1}\widetilde{P}^s_{p(k)}\,=\,P_{p(k)}^s,\qquad
\tilde{v}^{-1}\widetilde{Q}^s_{p(k)}\,=\,Q_{p(k)}^s
\qquad\qquad(1\le k\le N;\ 1\le s\le l'^2).
\end{equation}
Now (\ref{3}) can be rewritten as
\begin{equation}\label{14}
\widetilde{Q}^s_{p(k)}(a_{p(k)}^s)^{-1}C_{p(k)}^s\,=\,
\widetilde{P}_{p(k)}^s\qquad\qquad (1\le k\le N;\ 1\le s\le l'^").
\end{equation}
Choosing $\tilde{a}_{p(k)}^s\in S$ and $\tilde{q}_{p(k)}^s\in R$
such that
\begin{eqnarray}\label{15}
\tilde{a}_{p(k)}^s\widetilde{Q}^s_{p(k)}\,=\,a_{p(k)}^s\tilde{q}_{p(k)}^s\qquad\qquad
(1\le k\le N;\ 1\le s\le l'^2)
\end{eqnarray}
we can rewrite (\ref{14}) as follows:
\begin{eqnarray}\label{16}
\tilde{q}^s_{p(k)}C_{p(k)}^s\,=\,\tilde{a}_{p(k)}^s\widetilde{P}_{p(k)}^s
\qquad\qquad(1\le k\le N;\ 1\le s\le l'^2).
\end{eqnarray}
This relation holds in $R$. In view of (\ref{qq}) we have that
\begin{equation*}
Q^s_{p(k)}\,=\,Q^s_{2;\,p(k)}(Q^s_{1;\,p(k)})^{-1}\qquad\qquad(1\le
k\le N;\ 1\le s\le l'^2).
\end{equation*}
Combining this with (\ref{18''}) we obtain
\begin{equation}\label{17}
\widetilde{Q}^s_{p(k)}Q^s_{1;\,p(k)}\,=\,\tilde{v}Q^s_{2;\,p(k)}\qquad\qquad
(1\le k\le N;\ 1\le s\le l'^2).
\end{equation}
This relation holds in $R$ as well.

\smallskip

Finally, in view of (\ref{3}) and (\ref{qq}) we can replace
(\ref{e}) by the following relation:
\begin{eqnarray}\label{d'}
e_{ij}w_t\,=\,w_te_{ij}\qquad\qquad\quad(1\le i,j\le l';\ 1\le t\le
2d).
\end{eqnarray}
Since $v_i^{-1}u_i=w_i=u_i'v_i'^{-1}$ for some $u_i'\in R$ and
$v_i'\in S$, the latter can be rewritten as
$$
s_{ij}^{-1}E_{ij}v_t^{-1}u_t\,=\,u_t'v_t'^{-1}s_{ij}^{-1}E_{ij}\qquad\qquad(1\le
i,j\le l';\ 1\le t\le 2d). $$ There exists $v_{ij;\,t},
b_{ij;\,t}\in S$ and $E_{ij;\,t}, D_{ij;\,t}\in R$ such that
\begin{eqnarray}
v_{ij;\,t}E_{ij}&=&E_{ij;\,t}v_t;\label{D1}\\
s_{ij}v_t'D_{ij;\,t}&=&E_{ij}b_{ij;\,t}\label{D2}
\end{eqnarray}
for all $1\le i,j\le l'$ and $1\le t\le 2d$. Then (\ref{d'}) gives
rise to the relations
\begin{eqnarray}\label{D''}
\qquad\qquad
E_{ij;\,t}u_tb_{ij;\,t}\,=\,v_{ij;\,t}s_{ij}u_t'D_{ij;\,t}\qquad\quad(1\le
i,j\le l';\ 1\le t\le 2d)
\end{eqnarray}
which hold in $R$.

\smallskip

We denote by $X$ the set of all elements
\begin{eqnarray*}
&&E_{ij}, E_{ij}',C_{ij}^k, E_{ij,tk}, E_{ij,th}^k, C_{ij,th}^k,
q_k, D_{ij}^k, T_{ij}^k, q_{p(j);\,k},\\&& u_i, u_i', u_{i,j},
b_{i,j}', u_{i(1),\ldots, i(k)}, u_{{\bf
i}(j)}(t),\tilde{q}_{p(k)}^s, E_{ij;\,t}, D_{ij;\,t}
\end{eqnarray*}
 and by $Y$ the set of all elements
\begin{eqnarray*}
&&s_{ij}, s'_{ij},a_{ij}^k, r_{ij,tk}, r_{ij,th}^k, a_{ij,th}^k,
s_{1,\ldots,k}, s_{ij}^k, s_{ij;\,k}, s_{p(1),\ldots,p(j);\,k},\\&&
Q_{1;\,p(k)}^s, Q_{2;\,p(k)}^s, v_i, v_i', v_{i,j}, b_{i,j},
v_{i(1),\ldots,i(k)}, v_{{\bf i}(j)}(t),\tilde{a}_{p(k)}^s,
v_{ij;\,t}, b_{ij;\,t}.\end{eqnarray*}
\subsection{}\label{3.3} Let $X\subset R$ and $Y\subset S$
be the finite subsets introduced in \ref{3.1} and \ref{3.2}.
Obviously, they lie in $R_m$ for some $m\gg 0$, hence involve only
finitely many scalars in $\CC$.  From now on we will always assume
that those scalars are in $A$ and hence $X\cup Y\subset R_A$. It
will be crucial for us in what follows to work with those admissible
rings $A$  for which the images of the elements of $Y$ in
$R_\k\,=\,(R_A/\mathfrak{P}R_A)\otimes_{A/\mathfrak{P}}\,\k$ remain
regular for all maximal ideals $\mathfrak{P}$ of $A$. Our next
result ensures that such admissible rings do exist.
\begin{lemma}\label{reg}
Let $s$ be a regular element of $R$ contained in $R_A$ and assume
that $A$ satisfies the conditions imposed in \ref{2.7}. Then there
exists an admissible extension $B$ of $A$ such that for every
$\mathfrak{P}\in{\rm Specm}\,B$ the element $s\otimes 1$ is regular
in $R_B\otimes_B\k_{\mathfrak{P}}\,\cong\,
(R_B/\mathfrak{P}R_B)\otimes_{B/\mathfrak{P}}\k$.
\end{lemma}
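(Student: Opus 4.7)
\medskip\noindent\emph{Proof plan.} Set $C:=R_A/sR_A$, a cyclic right $R_A$-module. Since $s$ is a non-zero-divisor in $R$ and the free $A$-module $R_A$ embeds into $R=R_A\otimes_A\CC$, left multiplication by $s$ is injective on $R_A$, so we have a short exact sequence of $A$-modules
\[
0\longrightarrow R_A\xrightarrow{L_s}R_A\longrightarrow C\longrightarrow 0.
\]
The canonical filtration of $R_A$ induces an exhaustive filtration $C_k:=\mathrm{im}(R_{A,k}\to C)$ on $C$ with injective transition maps, and $\mathrm{gr}(C)=\bigoplus_{k\ge 0}C_k/C_{k-1}$ is a graded quotient of $\mathrm{gr}(R_A)$. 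In particular it is a cyclic, hence finitely generated, graded module over the finitely generated $A$-algebra $\mathrm{gr}(R_A)$.

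The graded form of the Generic Freeness Lemma used already in \ref{2.7} then produces $0\ne b\in A$ such that every graded component $\mathrm{gr}(C)_k[b^{-1}]$ is free of finite rank over $A[b^{-1}]$. Set $B:=A[b^{-1}]$; it remains admissible and still enjoys all the freeness conclusions of \ref{2.7}. Because each $\mathrm{gr}(C)_k\otimes_A B$ is projective, the short exact sequences
\[
0\longrightarrow C_{B,k-1}\longrightarrow C_{B,k}\longrightarrow \mathrm{gr}(C)_k\otimes_A B\longrightarrow 0
\]
split, and a straightforward induction on $k$ (with base $C_{B,0}\cong B$) shows that every $C_{B,k}$ is a free $B$-module. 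Consequently $C_B=\varinjlim_k C_{B,k}$ is flat over $B$, so $\mathrm{Tor}_1^B(C_B,\k_{\mathfrak{P}})=0$ for every $\mathfrak{P}\in\mathrm{Specm}\,B$.

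Tensoring the displayed exact sequence with $\k_{\mathfrak{P}}$ over $B$ now shows that $L_s$ is injective on $R_B\otimes_B\k_{\mathfrak{P}}$, so the equation $s\bar{x}=0$ there forces $\bar{x}=0$. An identical argument, applied to the cyclic left $R_A$-module $C':=R_A/R_As$ and enlarging $B$ by inverting a further element $b'$ if necessary, shows that right multiplication by $s$ is also injective on $R_B\otimes_B\k_{\mathfrak{P}}$; together this means that $s\otimes 1$ is regular in $R_B\otimes_B\k_{\mathfrak{P}}$, as required. The single point deserving care is that the graded Generic Freeness Lemma must supply one $b\in A$ valid in \emph{all} filtration degrees simultaneously: inverting a distinct denominator in each degree would not produce a finitely generated $\Z$-algebra, and hence would not yield an admissible extension. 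Once this is in hand, the rest of the argument is standard filtered-to-flat bookkeeping and presents no further obstacle.
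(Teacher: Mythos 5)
Your argument is correct and essentially reproduces the paper's proof: both rely on the graded Generic Freeness Lemma applied to $\gr(R_A/sR_A)$ (and its left-handed analogue $\gr(R_A/R_As)$), pass to a suitable admissible localisation $B$, and then conclude that reduction modulo $\P$ preserves injectivity of left and right multiplication by $s$. The only cosmetic difference is that you package the final step as flatness of $C_B$ and $\mathrm{Tor}_1$-vanishing, whereas the paper instead splits $sR_B$ off $R_B$ as a direct summand and verifies $(sR_B)\cap\P R_B = s\cdot\P R_B$ by hand (first over $\mathfrak{f}=B/\P$ and then extending scalars to $\k$) — the same homological fact expressed element-wise.
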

\begin{proof}
Since $s\cdot R_A$ is a right ideal of $R_A$, the graded $A$-module
$\gr(s\cdot R_A)$ is an ideal of the commutative Noetherian ring
$\gr(R_A)$. Hence $\gr(s\cdot R_A)$ is a finitely generated
$\gr(R_A)$-module. As $A$ is a Noetherian domain, applying
[\cite{Eis}, Thm.~14.4] shows that there is a nonzero  $a_1\in A$
such that each $\big(\gr(s\cdot R_A)(n)\big)[a_1^{-1}]$ is a free
$A[a_1^{-1}]$-module of finite rank. Since $\big(\gr(s\cdot
R_A)(n)\big)[a_1^{-1}]\cong \big(\gr(s\cdot
R_{A[a_1^{-1}]})\big)(n)$ for all $n$, we see that there exists an
admissible ring $\tilde{A}\subset \CC$ containing $A$ such that all
graded components of $\gr(s\cdot R_{\tilde{A}})$ are free
$\tilde{A}$-modules of finite rank. Since we can repeat this
argument with the left ideal $R_A\cdot s$ in place of $s\cdot R_A$,
it can be assumed, after enlarging $\tilde{A}$ possibly, that all
graded components of $\gr(R_{\tilde{A}}\cdot s)$ are free
$\tilde{A}$-modules of finite rank as well.

\smallskip

Since $\gr(R_A)$ is a finitely generated $A$-algebra, we can also
apply [\cite{Eis}, Thm.~14.4] to the graded ${\rm gr}(R_A)$-module
$\gr(R_A/s\cdot R_A)\cong \gr(R_A)/\gr(s\cdot R_A)$ to deduce that
there is a nonzero $a_2\in A$ such that all graded components of
$$\gr(R_A/s\cdot R_A)[a^{-1}_2]\,\cong\,\big(\gr(R_A)/\gr(s\cdot
R_A)\big)[a_2^{-1}]\,\cong\,\gr(R_{A[a_2^{-1}]})/\gr(s\cdot
R_{A[a_2^{-1}]}) $$ are free $A[a_2^{-1}]$-modules of finite rank.
Replacing $s\cdot R_A$ by $R_A\cdot s$ in this argument we observe
that the same applies to all graded components of
$\gr(R_{A[a_3^{-1}]})/\gr(R_{A[a_3^{-1}]}\cdot s)$ for a suitable
nonzero $a_3\in A$.

\smallskip

We conclude that there exists an admissible extension $B$ of $A$
such that all graded components of $\gr(s\cdot R_B)$, $\gr(R_B\cdot
s)$, $\gr(R_B)/\gr(s\cdot R_B)$ and $\gr(R_B)/\gr(R_B\cdot s)$ are
free $B$-modules of finite rank. Straightforward induction on
filtration degree now shows that the free $B$-modules $s\cdot
R_B\cong R_B$ and $R_B\cdot s\cong R_B$ are direct summands of
$R_B$. Let $R_B'$ and $R_B''$ be $B$-submodules of $R_B$ such that
$R_B=(s\cdot R_B)\oplus R_B'$ and $R_B=(R_B\cdot s)\oplus R_B''$.

\smallskip
We now take any maximal ideal $\mathfrak{P}$ of $B$, denote by
$\mathfrak{f}$ the finite field $B/\mathfrak{P}$, and write
$\bar{x}$ for the image of $x\in R_B$ in
$R_\k=(R_B/\mathfrak{P}R_B)\otimes_\mathfrak{f}\k$. Note that
$R_{\mathfrak f}\,:=\,R_B/\mathfrak{P}R_B$ is an $\mathfrak{f}$-form
of the $\k$-vector space $R_\k$. Suppose $\bar{s}\cdot \bar{u}=0$
for some $u\in R_B$. Then
\begin{eqnarray*}
s\cdot u&\in& (s\cdot R_B)\cap\mathfrak{P}R_B\,=\,(s\cdot
R_B)\cap\big(\mathfrak{P}(s\cdot R_B)\oplus
\mathfrak{P}R_B'\big)\\
&=&(s\cdot R_B)\cap\big(s\cdot \mathfrak{P}R_B)\oplus
\mathfrak{P}R_B'\big)\,=\,s\cdot\mathfrak{P}R_B.
\end{eqnarray*}
Therefore, $s\cdot u=s\cdot u'$ for some $u'\in \mathfrak{P}R_B$.
Since $s$ is a regular element of $R$ and $s\cdot(u-u')=0$, we
deduce that $u=u'\in \mathfrak{P}R_B$. This yields $\bar{u}=0$. If
$\bar{v}\cdot \bar{s}=0$ for some $v\in R_B$, then we use the
decomposition $R_B=(R_B\cdot s)\oplus R_B''$ and argue as before to
deduce that $\bar{v}=0$. Hence $\bar{s}$ is a regular element of
$R_{\mathfrak{f}}$.

\smallskip

Let $l_{\bar{s}}\colon\,R_\k\to R_\k$ and
$r_{\bar{s}}\colon\,R_\k\to R_\k$ denote the left and right
multiplication by $\bar{s}$, respectively. Denote by $(R_\k)_j$ the
$j$th component of the filtration of $R_\k$ induced by the canonical
filtration of $U(\g_\k)$ and set $(R_{\mathfrak f})_j:=(R_\k)_j\cap
R_{\mathfrak f}$. We know that $\bar{s}\in (R_{\mathfrak f})_\ell$
for some $\ell$, whereas the regularity of $\bar{s}$ in
$R_{\mathfrak f}$ yields that the $\mathfrak f$-linear maps
$l_{\bar{s}}\colon\,(R_{\mathfrak f})_j\to(R_{\mathfrak
f})_{j+\ell}$ and $r_{\bar{s}}\colon\,(R_{\mathfrak
f})_j\to(R_{\mathfrak f})_{j+\ell}$ are injective for all $j\in
\Z_+$. Standard linear algebra then shows that so are all
$\k$-linear maps $l_{\bar{s}}\colon\,(R_\k)_j\to(R_\k)_{j+\ell}$ and
$r_{\bar{s}}\colon\,(R_\k)_j\to(R_\k)_{j+\ell}$. In other words,
$\bar{s}$ is regular in $R_\k$ as claimed.
\end{proof}
\section{\bf  Proving the main results}
\subsection{}\label{4.1} From now on we assume that for every $s\in Y$ the
element $s\otimes 1$ is regular in $R_\k=(R_A/\P
R_A)\otimes_{\mathfrak f}\,\k$ for every $\P\in{\rm Specm}\,A$ (here
$\mathfrak{f}=A/\P$). Since $Y$ is a finite set, this is a valid
assumption thanks to Lemma~\ref{reg}. We also assume that our
admissible ring $A$ satisfies all requirements mentioned in Sect.~2.
The discussion in \ref{2.8} then shows that the simple algebraic
group $G_\k$ acts on $R_\k$ as algebra automorphisms and preserves
the filtration of $R_\k$ induced by the canonical filtration of
$U(\g_\k)$.

\smallskip

Since $U(\g_\k)$ is a finite module over its centre, so is its
homomorphic image $R_\k\,=\,\big(U(\g_A)/\I_A\big)\otimes_{\mathfrak
f}\,\k\,\cong\,U(\g_\k)/\I_\k$. Being a homomorphic image of
$U(\g_\k)$, the ring $R_\k$ is Noetherian and, moreover, an affine
PI-algebra over $\k$. Let $I_1,\ldots, I_\nu$ be the minimal primes
of $R_\k$ and $N_\k:=\bigcap_{j=1}^\nu\,I_j$. Then
$\nu=\nu(\P)\in\N$ and $N_\k$ is the maximal nilpotent ideal of
$R_\k$; see [\cite{Sch}, Thm.~2]. In particular,
$\bar{R}_\k:=R_\k/N_\k$ is a semiprime Noetherian ring. By Goldie's
theory, the set $\bar{S}_\k$ of all regular elements of $\bar{R}_\k$
is an Ore set in $\bar{R}_\k$ and the quotient ring
$\mathcal{Q}(\bar{R}_\k)=\bar{S}_\k^{-1}\bar{R}_\k$ is semisimple
and Artinian.

\smallskip

Write $Z(\bar{R}_\k)$ for the centre of $\bar{R}_\k$ and
$\mathcal{C}(Z(\bar{R}_\k))$ for the set of all elements of
$Z(\bar{R}_\k)$ which are regular in $\bar{R}_\k$. Since
$\bar{R}_\k$ is a finite module over the image of the $p$-centre of
$U(\g_\k)$ in $\bar{R}_\k$, it is algebraic over $Z(\bar{R}_\k)$.
Applying [\cite{AH}, Thm.~2] now yields that
$\mathcal{Q}(\bar{R}_\k)$ is obtained from $\bar{R}_\k$ by inverting
the elements from $\mathcal{C}(Z(\bar{R}_\k))$ (the latter is
obviously an Ore set in $\bar{R}_\k$).

\begin{prop}\label{quot}
There exists a unital subalgebra $\mathfrak{C}_\k$ of
$\mathcal{Q}(\bar{R}_\k)$ isomorphic to $\Mat_{l'}(\k)$ and such
that
$\mathcal{Q}(\bar{R}_\k)\,\cong\,\mathfrak{C}_\k\otimes\mathfrak{D}_\k$
where $\mathfrak{D}_\k$ is the centraliser of $\mathfrak{C}_\k$ in
$\mathcal{Q}(\bar{R}_\k)$.
\end{prop}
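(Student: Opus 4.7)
The plan is to reduce modulo $\P$ the system of relations derived in \ref{3.1} that encodes the matrix-unit subalgebra $\mathfrak{C}\subset \mathcal{Q}(R)$, and to reassemble the reduced data into a copy of $\Mat_{l'}(\k)$ sitting inside $\mathcal{Q}(\bar{R}_\k)$. The crucial input, coming from the running hypothesis imposed in \ref{4.1} (built on Lemma~\ref{reg}), is that each $s\in Y$ has regular image $\bar{s}$ in $R_\k$.

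The first task is to upgrade this to regularity in the semiprime quotient $\bar{R}_\k=R_\k/N_\k$. Since $R_\k$ is module-finite over the image of the $p$-centre $Z_p(\g_\k)$ in $R_\k$, it is a Noetherian PI algebra; by Small's theorem it possesses a classical ring of fractions $\mathcal{Q}(R_\k)$, which is Artinian, and the extension of $N_\k$ to $\mathcal{Q}(R_\k)$ coincides with the Jacobson radical $J$. Because units in an Artinian ring are detected modulo $J$, and $\mathcal{Q}(R_\k)/J\cong \mathcal{Q}(\bar{R}_\k)$, the image of $\bar{s}$ in $\bar{R}_\k$ is regular for every $s\in Y$, hence a unit in $\mathcal{Q}(\bar{R}_\k)$.

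With this in hand, define $\bar{e}_{ij}:=\bar{s}_{ij}^{-1}\bar{E}_{ij}\in \mathcal{Q}(\bar{R}_\k)$ using the reductions of the elements appearing in (\ref{0}). The reductions modulo $\P$ of the identities (\ref{00}), (\ref{i}), (\ref{j}) and (\ref{n}) are valid in $\bar{R}_\k$; after passing to $\mathcal{Q}(\bar{R}_\k)$ and cancelling the now-invertible images of the elements of $Y$, they translate into the matrix-unit relations $\bar{e}_{ij}\bar{e}_{tk}=\delta_{jt}\bar{e}_{ik}$ together with $\sum_{i=1}^{l'}\bar{e}_{ii}=1$. Therefore $\mathfrak{C}_\k:=\bigoplus_{i,j}\k\bar{e}_{ij}$ is a unital subalgebra of $\mathcal{Q}(\bar{R}_\k)$ isomorphic to $\Mat_{l'}(\k)$. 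Since $\mathcal{Q}(\bar{R}_\k)$ is semisimple Artinian and $\mathfrak{C}_\k$ is a simple unital subalgebra, the standard double-centralizer theorem for simple subalgebras of semisimple Artinian algebras yields $\mathcal{Q}(\bar{R}_\k)\cong \mathfrak{C}_\k\otimes_\k\mathfrak{D}_\k$, where $\mathfrak{D}_\k$ is the centralizer of $\mathfrak{C}_\k$, itself semisimple Artinian.

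The main obstacle is the first step: bootstrapping regularity from $R_\k$ to $\bar{R}_\k$, which is not formal — a regular element of a non-semiprime Noetherian ring need not remain regular in its reduction. The Noetherian PI structure of $R_\k$, via Small's description of its Artinian classical quotient, is what makes the bootstrap work. A secondary concern is to ensure that the defining scalars of all elements in $X\cup Y$ and all constants in the relations (\ref{a})--(\ref{n}) lie in $A$, so that reduction modulo $\P$ is legitimate; this is guaranteed by the finiteness of $X\cup Y$ together with the admissibility enlargements made in \ref{2.7}--\ref{3.3}.
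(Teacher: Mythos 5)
Your overall strategy matches the paper's: reduce the \ref{3.1} system modulo $\P$, use regularity of the images of $Y$ to invert them in $\mathcal{Q}(\bar{R}_\k)$, reassemble the matrix units $\bar{e}_{ij}$, and then obtain the tensor decomposition. However, your key intermediate step — upgrading regularity from $R_\k$ to $\bar{R}_\k = R_\k/N_\k$ — is not justified as written. You invoke Small's theorem to claim that $R_\k$ "possesses a classical ring of fractions $\mathcal{Q}(R_\k)$, which is Artinian", but Small's theorem is a \emph{criterion}: a Noetherian ring has an Artinian classical quotient ring if and only if $\mathcal{C}(0)=\mathcal{C}(N)$, and this condition is not automatic even for commutative affine rings (e.g.\ $\k[x,y]/(x^2,xy)$ fails it, since $y$ is a zero-divisor yet regular mod the nilradical). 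Being Noetherian, PI, and module-finite over an affine central subalgebra does not by itself give Small's condition, so your appeal to an Artinian $\mathcal{Q}(R_\k)$ is unjustified. Moreover, verifying the hard inclusion $\mathcal{C}(N)\subseteq\mathcal{C}(0)$ would be more work than the problem requires.

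The fix is to use the weaker and genuinely automatic fact that the paper cites: in a right Noetherian ring, a left regular element is regular modulo the prime radical ($\mathcal{C}'(0)\subseteq\mathcal{C}(N)$); this is [\cite{McR}, Prop.~4.1.3(iii)], cf.\ also [\cite{Gol}, 2.3, 2.5]. With that replacement, your argument goes through directly with no need for an Artinian quotient ring of $R_\k$. In fact your own aside that "a regular element of a non-semiprime Noetherian ring need not remain regular in its reduction" is wrong — for Noetherian rings it \emph{does}; what may fail is the converse inclusion $\mathcal{C}(N)\subseteq\mathcal{C}(0)$, i.e.\ Small's condition. On the decomposition step, your use of the double-centraliser fact for a unital $\Mat_{l'}(\k)$-subalgebra is fine and slightly cleaner than the paper's route (the paper first shows $\mathcal{Q}(\bar{R}_\k)=\mathfrak{C}_\k\cdot\mathfrak{D}_\k$ using (\ref{d})--(\ref{e}) and the Armendariz--Hajarnavis central localisation, then checks injectivity of $\mathfrak{C}_\k\otimes\mathfrak{D}_\k\to\mathcal{Q}(\bar{R}_\k)$); no Artinian hypothesis is actually needed for that general Peirce-decomposition fact.
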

\begin{proof}
The ring theoretic notation used below will follow that of
[\cite{McR}]. Given a two-sided ideal $I$ of the ring $R_\k$ we
write $\mathcal{C}'(I)$ for the set of all elements $r\in R_\k$ for
which the coset $r+I$ is left regular in the ring $R_\k/I$ (the
latter means that $r\cdot x\in I$ for $x\in R_\k$ implies $x\in I$).
We denote by $\mathcal{C}(I)$ the set of all elements $r\in R_\k$
such that the coset $r+I$ is left and right regular in $R_\k/I$. As
we know, for each $y\in Y$ the element $y\otimes 1$ is regular in
$R_\k$. In particular, $y\otimes 1\in\mathcal{C}'(0)$. To ease
notation we now let $\bar{x}$ denote the image of $x\in R_A$ in
$\bar{R}_\k=R_\k/N_\k$. As the ring $R_\k$ is right Noetherian, it
follows from [\cite{Gol}, 2.3, 2.5] that $\mathcal{C}'(0)\subseteq
\mathcal{C}(N_\k)$ (see also [\cite{McR}, Prop.~4.1.3(iii)]). This
shows that for every $y\in Y$ the element $\bar{y}$ is regular in
$\bar{R}_\k$.

\smallskip

The subset $\bar{X}\cup\bar{Y}$ of $\bar{R}_\k$ contains elements
satisfying the relations (\ref{00}), (\ref{f}), (\ref{g}),
(\ref{h}), (\ref{i}), (\ref{j}), (\ref{m}), (\ref{n}), (\ref{p}),
(\ref{r'}), (\ref{s}).
 Since all elements of $\bar{Y}$
involved in these relations remain regular in $\bar{R}_\k$ and each
step of the procedure described in \ref{3.1} is reversible, we can
find elements $\bar{e}_{ij}$ and $\bar{c}_{ij}^k$ in
$\mathcal{Q}(\bar{R}_\k)$, where $1\le i,j\le l'$ and $1\le k\le n$,
satisfying the relations (\ref{a}), (\ref{b}), (\ref{d}), (\ref{e}).
We denote by $\mathfrak{C}_\k$ the $\k$-span of the
$\bar{e}_{ij}$'s. Thanks to (\ref{a}) and (\ref{b}), it is a
homomorphic image of $\Mat_{l'}(\k)$ and a unital subalgebra of
$\mathcal{Q}(\bar{R}_\k)$. Therefore,
$\mathfrak{C}_\k\cong\Mat_{l'}(\k)$ as $\k$-algebras.

\smallskip

In  view of (\ref{e}) all elements $\bar{c}_{ij}^k$ commute with
$\mathfrak{C}_\k$, whilst (\ref{d}) implies that the $\bar{g}_k$'s
lie in $\mathfrak{C}_\k\cdot \mathfrak{D}_\k$ where
$\mathfrak{D}_\k$ is the centraliser of $\mathfrak{C}_\k$ in
$\mathcal{Q}(\bar{R}_\k)$. As the inverses of the elements from
$\mathcal{C}(Z(\bar{R}_\k))$ lie in $\mathfrak{D}_\k$ as well and
$\mathcal{Q}(\bar{R}_\k)\,=\,\bar{S}_\k^{-1}\bar{R}_\k\,=\,
\big(\mathcal{C}(Z(\bar{R}_\k))\big)^{-1}\bar{R}_\k$ by our earlier
remarks, we deduce that
$\mathcal{Q}(\bar{R}_\k)\,=\,\mathfrak{C}_\k\cdot\mathfrak{D}_\k$.
As a consequence, there exists a surjective algebra homomorphism
$\psi\colon
\mathfrak{C}_\k\otimes\mathfrak{D}_\k\,\twoheadrightarrow\,
\mathcal{Q}(\bar{R}_\k)$. Since $\mathfrak{C}_\k$ is a matrix
algebra, it is straightforward to see that $\psi$ is injective. This
completes the proof.
\end{proof}
\subsection{}\label{4.2} Let $Z(\bar{R}_\k)$ be the centre of
$\bar{R}_\k$ and denote by $Z_p(\bar{R}_\k)$ the image of the
$p$-centre $Z_p(\g_\k)$ in $\bar{R}_\k$. Recall from (\ref{f-gen})
that the commutative $A$-algebra ${\rm gr}(R_A)$ is generated by $D$
homogeneous elements as a module over its graded polynomial
subalgebra $A[y_1,\ldots,y_{2d}]$, where $d=d(e)$.
\begin{lemma}\label{Z0}
There exists a $\k$-subalgebra $\bar{Z}_0$ of $Z_p(\bar{R}_\k)$
generated by $2d$ elements and such that $\bar{R}_\k$ is generated
as a $\bar{Z}_0$-module by $Dp^{2d}$ elements.
\end{lemma}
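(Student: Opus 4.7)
The plan is to pull back to $Z_p(\bar R_\k)$ a $2d$-generated subalgebra of the $p$-centre $Z_p(\g_\k)$ whose image on the associated graded captures the Frobenius twist of the Noether normalisation~(\ref{f-gen}). The key observation is that each $g_k^p-g_k^{[p]}$ lies in canonical filtration degree $p$ with symbol $g_k^p\in S^p(\g_\k)$, so evaluating any polynomial in the central elements $g_k^p-g_k^{[p]}$ produces an element of $Z_p(\g_\k)$ whose symbol is the Frobenius of the corresponding polynomial in $g_1,\dots,g_n$.

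Concretely, I would first pick homogeneous lifts $F_i\in S(\g_A)$ of the Noether normalisers $y_i\in\gr(R_A)$ for $1\le i\le 2d$, reduce mod $\P$ to obtain $\bar F_i\in S(\g_\k)$, and define
$$z_i\;:=\;\bar F_i\bigl(g_1^p-g_1^{[p]},\,\dots,\,g_n^p-g_n^{[p]}\bigr)\;\in\;Z_p(\g_\k).$$
Denoting by $\bar z_i\in Z_p(\bar R_\k)$ the image of $z_i$ under $U(\g_\k)\to R_\k\to\bar R_\k$, the freeness statements of \ref{2.7} identify $\gr(R_\k)$ with $\gr(R_A)\otimes_A\k$, so the symbol of $\bar z_i$ in $\gr(\bar R_\k)$ is the image of $y_i^p$. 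I would then set $\bar Z_0:=\k[\bar z_1,\dots,\bar z_{2d}]\subseteq Z_p(\bar R_\k)$, a subalgebra generated by $2d$ elements by construction.

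For the module-generation statement, pick lifts $\tilde v_j,\tilde y_i\in R_A$ of the same canonical filtration degrees as $v_j$ and $y_i$ respectively, and let $\bar v_j,\bar y_i\in\bar R_\k$ be their images. On the associated graded, (\ref{f-gen}) tensored with $\k$ and pushed through to $\gr(\bar R_\k)$ says the latter is generated as a $\k[y_1,\dots,y_{2d}]$-module by $v_1,\dots,v_D$; since $\k[y_1,\dots,y_{2d}]$ is free of rank $p^{2d}$ over its Frobenius subring $\k[y_1^p,\dots,y_{2d}^p]$ with basis the monomials $y^{\bf a}$, $0\le a_i<p$, the graded ring $\gr(\bar R_\k)$ is generated over the image of $\bar Z_0$ (which contains $y_1^p,\dots,y_{2d}^p$) by the $Dp^{2d}$ symbols $v_jy^{\bf a}$. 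A standard filtered-lifting induction on canonical filtration degree — writing the top symbol of $\bar r\in\bar R_\k$ as $\sum Q_{j,{\bf a}}(y_1^p,\dots,y_{2d}^p)\,v_jy^{\bf a}$, subtracting $\sum Q_{j,{\bf a}}(\bar z_1,\dots,\bar z_{2d})\bar v_j\bar y^{\bf a}$, and iterating — then yields the desired generation of $\bar R_\k$ over $\bar Z_0$ by the $Dp^{2d}$ lifts. The only real obstacle is bookkeeping the matching of symbol degrees for the products $\bar z^{\bf b}\bar v_j\bar y^{\bf a}$, which follows from commutativity of $\gr(\bar R_\k)$ together with $\sigma(\bar z_i)=y_i^p$.
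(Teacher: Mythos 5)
Your strategy is essentially the one the paper uses: lift the Noether normalisers $y_1,\dots,y_{2d}$ of $\gr(R_A)$ to central elements $z_i\in Z_p(\g_\k)$, then run a filtered--graded induction to pass from finite generation of $\gr(\bar R_\k)$ to finite generation of $\bar R_\k$. The paper lifts $u_i\in U(\g_A)$, observes that $(\gr_{a_i}\bar u_i)^p$ lies in $\gr Z_p(\g_\k)=\k[g_1^p,\dots,g_n^p]$ because it is a $p$th power, and picks $z_i$ abstractly with that symbol; you instead write down the explicit element $z_i=\bar F_i\bigl(g_1^p-g_1^{[p]},\dots,g_n^p-g_n^{[p]}\bigr)$.

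There is, however, a genuine gap in your symbol computation. The symbol of your $z_i$ in $S(\g_\k)$ is $\bar F_i(g_1^p,\dots,g_n^p)$, obtained by the $\k$-linear substitution $g_j\mapsto g_j^p$ in $\bar F_i$. This is \emph{not} the same as $\bar F_i(g_1,\dots,g_n)^p$: if $\bar F_i=\sum c_\alpha g^\alpha$ then $\bar F_i(g^p)=\sum c_\alpha g^{p\alpha}$ while $\bar F_i(g)^p=\sum c_\alpha^p g^{p\alpha}$, and these agree only when every $c_\alpha$ lies in the prime subfield $\Bbb F_p$. But the residue field $A/\P$ is in general $\Bbb F_q$ for a proper power $q$ of $p$, so the reduced coefficients of $\bar F_i$ need not be Frobenius-fixed. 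Consequently your claim that the symbol of $\bar z_i$ is the image of $y_i^p$, and hence that the image of $\bar Z_0$ in $\gr(\bar R_\k)$ contains $\bar y_1^p,\dots,\bar y_{2d}^p$, is not justified — and this is exactly the statement your module-generation induction needs.

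The fix is easy and stays within your framework: replace $\bar F_i$ by $\bar F_i^{(1)}$, the polynomial obtained by raising each coefficient of $\bar F_i$ to the $p$th power, and set $z_i:=\bar F_i^{(1)}\bigl(g_1^p-g_1^{[p]},\dots,g_n^p-g_n^{[p]}\bigr)$; then $\gr z_i=\bar F_i^p$ and maps to $\bar y_i^p$ as required. Alternatively, do what the paper does and simply note that $(\gr_{a_i}\bar u_i)^p$, being a $p$th power, lies in $\gr Z_p(\g_\k)=\k[g_1^p,\dots,g_n^p]$, so some $z_i\in Z_p(\g_\k)$ with this symbol exists; no explicit formula is needed for the induction.
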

\begin{proof}
We follow the proof of [\cite{P07'}, Lemma~3.2] very closely. Write
$(R_A)_j$ (resp. $(R_\k)_j$) for the image in $R_A$ (resp. $R_\k$)
of the $j$th component of the canonical filtration of $U(\g_A)$
(resp. $U(\g_\k)$).

\smallskip

Suppose that $y_i$ has degree $a_i$, where $1\le i\le 2d$, and $v_k$
has degree $l_k$, where $1\le k\le D$, and let
$\Phi_A\colon\,S(\g_A)\twoheadrightarrow \gr(R_A)$ denote the
canonical homomorphism. For $1\le i\le 2d$ (resp. $1\le k\le D$)
choose $u_i\in U(\g_A)$ (resp. $w_k\in U_{l_k}(\g_A)$) such that
$\Phi_A(\gr_{a_i}\,u_i)=y_i$ (resp. $\Phi_A(\gr_{l_k}\,w_k)=v_k$).
Let $\bar{u}_i$ (resp. $\bar{w}_k$) denote the image of $u_i$ (resp.
$w_k$) in $R_\k=(U(\g_A)/\I_A)\otimes_A\k_\P$. For every $n\in\Z_+$
the set
$$\{w_ku_1^{i_1}\cdots u_{2d}^{i_{2d}}\,|\,\,
l_k+\textstyle{\sum}_{j=1}^{2d}\, i_ja_j\le n;\ 1\le k\le D\}$$
spans the $A$-module $(R_A)_n$. In view of our earlier remarks this
implies that the set
$$\{\bar{w}_k\bar{u}_1^{i_1}\cdots \bar{u}_{2d}^{i_{2d}}\,|\,\,
l_k+\textstyle{\sum}_{j=1}^{2d}\, i_ja_j\le n;\ 1\le k\le D\}$$
spans the $\k$-space $(R_\k)_n$. Since
$\gr_{pa_i}(\bar{u}_i^p)=(\gr_{a_i}\bar{u}_i)^p$ is a $p$th power in
$S(\g_\k)$, for every $i\le 2d$ there exists a $z_i\in
Z_p(\g_\k)\cap U_{a_i}(\g_\k)$ such that $\bar{u}_i^p-z_i\in
U_{pa_i-1}(\g_\k)$. We let $Z_0$ be the $\k$-subalgebra of
$Z_p(\g_\k)$ generated by $z_1,\ldots, z_{2d}$ and denote by
$\bar{Z}_0$ the image of $Z_0$ in $\bar{R}_\k=R_\k/N_\k$.

\smallskip

Let $R_\k'$ the $Z_0$-submodule of $R_\k$ generated by all
$\bar{w}_k\bar{u}_1^{i_1}\cdots \bar{u}_{2d}^{i_{2d}}$ with $0\le
i_j\le p-1$ and $1\le k\le D.$ Using the preceding remarks and
induction on $n$ we now obtain that $(R_\k)_n\subset R_\k'$ for all
$n\in\Z_+$. But then $R_\k=R_\k'$, implying that the set
$$\Lambda\,:=\,\{\bar{w}_k\bar{u}_1^{i_1}\cdots
\bar{u}_{2d}^{i_{2d}}\,|\,\, 0\le i_j\le p-1;\, 1\le k\le D\}$$
generates $R_\k$ as an $Z_0$-module. Obviously, $|\Lambda|\le
Dp^{2d}$. As $\bar{R}_\k$ is a homomorphic image of $R_\k$ and the
action of $\bar{Z}_0$ on $\bar{R}_\k$ is induced by that of
$Z_0\subset Z_p(\g_\k)$, the result follows.
\end{proof}
\begin{corollary}\label{max-dim}
Every irreducible $\bar{R}_\k$-module has dimension $\le
\sqrt{D}\cdot p^d$.
\end{corollary}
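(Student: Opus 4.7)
The plan is to exploit Lemma~\ref{Z0}: the central subalgebra $\bar{Z}_0 \subseteq Z(\bar{R}_\k)$ is finitely generated and $\bar{R}_\k$ is module-finite over it, so every simple $\bar{R}_\k$-module must admit a central character on $\bar{Z}_0$, after which a dimension count will finish the argument.

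More precisely, let $V$ be an irreducible $\bar{R}_\k$-module and put $\mathfrak{M} := {\rm Ann}_{\bar{R}_\k}(V)$. Since $\bar{R}_\k$ is module-finite over its affine central subalgebra $\bar{Z}_0$, it is an affine PI-algebra over the algebraically closed field $\k$. By the Kaplansky--Amitsur theorem, the primitive ideal $\mathfrak{M}$ is maximal and the primitive quotient $\bar{R}_\k/\mathfrak{M}$ is finite-dimensional over $\k$. Hence $V$ itself is finite-dimensional, and by the Jacobson density theorem together with $\bar{\k}=\k$ one gets $\bar{R}_\k/\mathfrak{M} \cong \Mat_n(\k)$, where $n := \dim_\k V$.

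Next I would observe that the image of $\bar{Z}_0$ in $\bar{R}_\k/\mathfrak{M} \cong \Mat_n(\k)$ lies in the centre $\k\cdot\mathrm{Id}$, so the induced map $\bar{Z}_0 \to \k$ has kernel a maximal ideal $\mathfrak{m}\subset \bar{Z}_0$ with $\bar{Z}_0/\mathfrak{m}\cong \k$ and $\mathfrak{m}\bar{R}_\k \subseteq \mathfrak{M}$. This yields a surjection
\[ \bar{R}_\k/\mathfrak{m}\bar{R}_\k \,\twoheadrightarrow\, \bar{R}_\k/\mathfrak{M} \,\cong\, \Mat_n(\k). \]
By Lemma~\ref{Z0}, the quotient $\bar{R}_\k/\mathfrak{m}\bar{R}_\k$ is spanned as a $\k$-vector space by the images of the $Dp^{2d}$ module generators of $\bar{R}_\k$ over $\bar{Z}_0$, so $\dim_\k\!\big(\bar{R}_\k/\mathfrak{m}\bar{R}_\k\big) \le Dp^{2d}$. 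Consequently $n^2 \le Dp^{2d}$, which forces $\dim_\k V = n \le \sqrt{D}\cdot p^d$.

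The only non-trivial ingredient in the argument is the classical fact that primitive quotients of affine PI-algebras over $\k$ are finite-dimensional matrix algebras over $\k$; all the remaining steps are routine dimension counting based on Lemma~\ref{Z0}, and I do not anticipate any real obstacle.
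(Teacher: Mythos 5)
Your proof is correct and follows essentially the same line of reasoning as the paper: after observing that the affine central subalgebra $\bar{Z}_0$ must act by scalars, you count the $Dp^{2d}$ module generators from Lemma~\ref{Z0} to bound $\dim\End V=(\dim V)^2$. The paper compresses this into one sentence (``the image of $\Lambda$ spans $\End V$'') while you spell out the finite-dimensionality of $V$ and the identification $\bar R_\k/\mathfrak M\cong\Mat_n(\k)$ via Kaplansky--Amitsur, but the content is the same.
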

\begin{proof}
This is an immediate consequence of Lemma~\ref{Z0}, because the
central elements of $\bar{R}_\k$ act on any irreducible
$\bar{R}_\k$-module $V$ as scalar operators and the image of
$\Lambda$ in $\End V$ spans $\End V$.
\end{proof}
\begin{prop}\label{Krull}
The centre $Z(\bar{R}_\k)$ is an affine algebra over $\k$ and
$$\dim Z(\bar{R}_\k)=\dim Z_p(\bar{R}_\k)=2d.$$
\end{prop}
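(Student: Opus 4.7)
The plan is to reduce both assertions to statements about $Z_p(\bar{R}_\k)$: the upper bound will come from Lemma~\ref{Z0}, and the lower bound will come from passing to the associated graded and using the equality $\mathcal{VA}(\I)=\overline{\O}$.

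First I would observe that, since $U(\g_\k)$ is a finite module over its $p$-centre $Z_p$ (see \ref{2.4}), so is every homomorphic image, and in particular $\bar{R}_\k$ is a Noetherian $Z_p(\bar{R}_\k)$-module. As $Z_p(\bar{R}_\k)$ is affine (being a quotient of the polynomial algebra $Z_p$) and $Z(\bar{R}_\k)\subseteq\bar{R}_\k$, the centre $Z(\bar{R}_\k)$ is a finitely generated $Z_p(\bar{R}_\k)$-module, hence an affine $\k$-algebra. The integral extension $Z_p(\bar{R}_\k)\hookrightarrow Z(\bar{R}_\k)$ then preserves Krull dimension, so $\dim Z(\bar{R}_\k)=\dim Z_p(\bar{R}_\k)$.

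Next I would compute $\dim Z_p(\bar{R}_\k)$. For the upper bound, Lemma~\ref{Z0} provides a subalgebra $\bar{Z}_0\subseteq Z_p(\bar{R}_\k)$ generated by $2d$ elements over which $\bar{R}_\k$, and therefore also its submodule $Z_p(\bar{R}_\k)$, is finite; hence $\dim Z_p(\bar{R}_\k)\le\dim\bar{Z}_0\le 2d$. For the lower bound I would pass to the canonical filtration: the freeness arranged in \ref{2.7} gives $\gr R_\k\cong(\gr R_A)\otimes_A\k$ with the same Hilbert function as $\gr R$, so $\dim\gr R_\k=2d$. Since $x^p-x^{[p]}\in Z_p$ has leading symbol $x^p$ under the canonical filtration, the graded subring $\gr Z_p(R_\k)$ of $\gr R_\k$ coincides with the $p$-th power subring $(\gr R_\k)^p$, over which $\gr R_\k$ is integral. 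Therefore
\[
\dim Z_p(R_\k)\,=\,\dim\gr Z_p(R_\k)\,=\,\dim(\gr R_\k)^p\,=\,\dim\gr R_\k\,=\,2d.
\]
Finally, $N_\k$ is a nilpotent ideal of $R_\k$, so $N_\k\cap Z_p(R_\k)$ is a nilpotent ideal of $Z_p(R_\k)$, and the quotient $Z_p(\bar{R}_\k)=Z_p(R_\k)/(Z_p(R_\k)\cap N_\k)$ still has Krull dimension $2d$.

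The hardest step will be the lower bound. It requires matching $\gr Z_p(R_\k)$ with the full $p$-th power subring $(\gr R_\k)^p$, and more delicately knowing that the characteristic-zero dimension $2d$ of $\mathcal{VA}(\I)$ descends correctly to $\gr R_\k$ for every $p\in\Pi(A)$ with which we work; this is precisely what the freeness of the graded components of $\gr(\I_A)$ arranged in \ref{2.7} is designed to guarantee.
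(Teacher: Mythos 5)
Your proof is correct, and the lower-bound argument is genuinely different from the paper's. Both proofs obtain the upper bound $\dim Z_p(\bar{R}_\k)\le 2d$ and the affineness/equality statements via Lemma~\ref{Z0} and integrality over the $2d$-generated subalgebra $\bar{Z}_0$. For the lower bound, however, the paper argues geometrically: it uses the construction of \ref{2.6} to show that ${\rm Specm}\,Z_p(\bar{R}_\k)$, viewed as a $G_\k$-stable closed subset of $\g_\k^*$, contains the $p$-character $\Psi\in\chi+\m_\k^\perp$ of the irreducible $\bar{R}_\k$-module $\widetilde{M}_{\k,\Psi}$, and then exploits the contracting $\k^\times$-action $\rho_e$ on $\chi+\m_\k^\perp$ to conclude $\dim\z_\Psi\ge 2d$, so $\dim({\rm Ad}^*G_\k)\Psi\ge 2d$. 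You instead compare Hilbert functions across the base change $A\to\k$: the freeness of graded components arranged in \ref{2.7} gives $\gr R_\k$ the same Hilbert polynomial as $\gr R$, hence $\dim\gr R_\k=2d$, and the chain $(\gr R_\k)^p\subseteq\gr Z_p(R_\k)\subseteq\gr R_\k$ of module-finite extensions (the first inclusion holding because the leading symbol of $\pi(x^p-x^{[p]})$ is $\bar{x}^p$ whenever this $p$-th power is nonzero in $\gr R_\k$) forces $\dim\gr Z_p(R_\k)=2d$, and hence $\dim Z_p(R_\k)=2d$; passing to the nilpotent quotient preserves this. Your route is more elementary and avoids constructing $\Psi$, but the paper's argument also establishes that $\Psi\in\mathcal{V}_\P(M)$ with $\dim\z_\Psi=2d$, facts that are reused in Remark~\ref{Psi}, in \ref{4.4}, and in \ref{4.6}; so the geometric version is not redundant in the overall development. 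One small inaccuracy: you should claim only the inclusion $(\gr R_\k)^p\subseteq\gr Z_p(R_\k)$ rather than equality—the reverse containment is neither obvious nor needed for the dimension count.
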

\begin{proof}
By Lemma~\ref{Z0}, $\bar{R}_\k$ is a finitely generated
$\bar{Z}_0$-module. Since $\bar{Z}_0$ is an affine $\k$-algebra,
$\bar{R}_\k$ is a Noetherian $\bar{Z}_0$-module. But then
$Z(\bar{R}_\k)$ and $Z_p(\bar{R}_\k)$ are finitely generated
$\bar{Z}_0$-modules. From this it is follows that the $\k$-algebra
$Z(\bar{R}_\k)$ is affine (of course, the same is true for
$Z_p(\bar{R}_\k)$, as it is a homomorphic image of $Z_p(\g_\k)$).
Both $Z(\bar{R}_\k)$ and $Z_p(\bar{R}_\k)$ being integral over
$\bar{Z}_0$, the inclusions $\bar{Z}_0\hookrightarrow
Z_p(\bar{R}_\k)$ and $\bar{Z}_0\hookrightarrow Z(\bar{R}_\k)$ give
rise to finite morphisms ${\rm Specm}\,\bar{Z}_0\twoheadrightarrow
{\rm Specm}\,Z_p(\bar{R}_\k)$ and ${\rm
Specm}\,\bar{Z}_0\twoheadrightarrow {\rm Specm}\,Z(\bar{R}_\k)$.
Since $\bar{Z}_0$ is a homomorphic image of the polynomial algebra
$\k[X_1,\ldots, X_{2d}]$, we now obtain
\begin{eqnarray}\label{leq}
\dim Z(\bar{R}_\k)=\dim Z_p(\bar{R}_\k)=\dim \bar{Z}_0\le 2d.
\end{eqnarray}

Recall from \ref{4.1} that the simple algebraic group $G_\k$ acts
rationally on $\bar{R}_\k$. Moreover, the canonical homomorphism
$c\colon\,U(\g_\k)\twoheadrightarrow R_\k=U(\g_\k)/\I_\k$ is
$G_\k$-equivariant. Since the inverse image under $c$ of the unique
maximal nilpotent ideal $N_\k$ of $R_\k$ is $G_\k$-stable, both
$Z_p(\bar{R}_\k)\cong Z_p(\g_\k)/\big(Z_p(\g_\k)\cap
c^{-1}({N}_\k)\big)$ and $Z(\bar{R}_\k)$ are stable under the action
of $G_\k$ on $\bar{R}_\k$. Since $Z_p(\bar{R}_\k)$ is a homomorphic
image of $Z_p(\g_\k)$, the maximal spectrum $\mathcal{V}_\P(M):={\rm
Specm}\,Z_p(\bar{R}_\k)$ identifies with a Zariski closed subset of
$\g_\k^*$ (see \ref{2.4} for more detail). By our discussion in
\ref{2.6}, the affine $G_\k$-variety $\mathcal{V}_\P(M)$ contains a
linear function $\Psi\in\chi+\m_\k^\perp$. Indeed, it is immediate
from the definition of $R_A$ that
$\widetilde{M}_{\k}=\widetilde{M}_A\otimes_A\k$ is an $R_\k$-module.
Therefore, $\bar{R}_\k=R_\k/N_\k$ acts on its simple quotient
$\widetilde{M}_{\k,\Psi}$. Since the $x^p-x^{[p]}$ acts on
$\widetilde{M}_{\k,\Psi}$ as $\Psi(x)^p\,{\rm Id}$ for every
$x\in\g_\k$ and $Z_p(\g_\k)\cap c^{-1}({N}_\k)$ annihilates
$\widetilde{M}_{\k,\Psi}$, we see that $\Psi$ induces an algebra
homomorphism $Z_p(\bar{R}_\k)\to\k$. In other words,
$\Psi\in\mathcal{V}_\P(M)$.

\smallskip

Given $j\in\Z^+$ we define $\Xi_j:=\{\eta\in\g_\k^*\,|\,\,\dim
\z_\xi\le 2j\}$, a Zariski closed, conical subset of $\g_\k^*$.
There is a cocharacter $\lambda\colon\,\k^\times\to G_\k$ such that
$(\Ad\lambda(t))(x)=t^ix$ for all $x\in \g_k(i)$ and
$t\in\k^\times$. Let $\rho_e\colon\,\k^\times \to\rm{GL}(\g_\k^*)$
denote the composition of ${\rm Ad}^*\,\lambda$ with the scalar
cocharacter $\xi\mapsto t^{-2}\xi$, where $\xi\in\g_\k^*$ and
$t\in\k^\times$. Obviously, $\rho_e$ induces a contracting
$\k^\times$-action on $\chi+\m_\k^\perp$ with centre at $\chi$.
Since for any $j$ the Zariski closed set $(\chi+\m_\k^\perp)\cap
\Xi_j$ is $\rho_e(\k^\times)$-stable and $\dim \z_\chi=2d$, we see
that $(\chi+\m_\k^\perp)\cap \Xi_j=\emptyset$ for all $j<2d$. This
implies that $\dim \z_\Psi\ge 2d$.

\smallskip

Since $({\rm Ad}^*\, G_\k)\,\Psi\subset \mathcal{V}_\P(M)$, we now
deduce that $\dim \mathcal{V}_\P(M)\ge 2d$. In conjunction with
(\ref{leq}) this gives $\dim Z(\bar{R}_\k)=\dim Z_p(\bar{R}_\k)=2d$,
as stated.
\end{proof}
\begin{rem}\label{Psi}
It follows from the proof of Proposition~\ref{Krull} that $\dim
\z_\Psi=2d$ and the orbit $({\rm Ad}^*\,G_\k)\,\Psi$ is open in the
variety $\mathcal{V}_\P(M)$. Moreover, arguing as in [\cite{P10},
3.6] it is easy to observe that $\chi$ and $\Psi$ belong to the same
sheet of $\g_\k^*$.
\end{rem}
\subsection{}\label{4.3} In this subsection we assume that
$\mathcal{D}_M$ is a Weyl skew-field and we adopt the notation and
conventions of \ref{4.1}. By Proposition~\ref{quot}, there is a
unital subalgebra $\mathfrak{C}_\k\cong\Mat_{l'}(\k)$ of
$\mathcal{Q}(\bar{R}_\k)$  such that
$\mathcal{Q}(\bar{R}_\k)\,\cong\,\mathfrak{C}_\k\otimes\mathfrak{D}_\k$
where $\mathfrak{D}_\k$ is the centraliser of $\mathfrak{C}_\k$ in
$\mathcal{Q}(\bar{R}_\k)$.
\begin{prop}\label{weyl-case}
Suppose $\mathcal{D}_M\cong \mathcal{Q}(\mathbf{A}_{d}(\CC))$ and
the admissible ring $A$ satisfies all the requirements of \ref{4.1}.
Then the $\k$-algebra $\mathfrak{D}_\k$ is isomorphic to the ring of
fractions $\mathcal{Q}(\mathbf{A}_d(\k))$ and
$\mathcal{Q}(\bar{R}_\k)\,\cong\,
\Mat_{l'}\big(\mathcal{Q}(\mathbf{A}_d(\k))\big)$.
\end{prop}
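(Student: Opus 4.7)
The plan is to produce elements $\bar{w}_1,\dots,\bar{w}_{2d}\in\mathfrak{D}_\k$ satisfying the Weyl commutation relations and generating $\mathfrak{D}_\k$ as an Ore ring of fractions of the subring $B_\k:=\k\la\bar{w}_1,\dots,\bar{w}_{2d}\ra$, and then to identify $B_\k$ with $\mathbf{A}_d(\k)$ by combining the Azumaya structure of the Weyl algebra over its centre with the transcendence-degree computation of Proposition~\ref{Krull}.

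All of the polynomial identities (\ref{1'''})--(\ref{D''}) obtained in~\ref{3.2} live in $R_A$ and involve only the finite set $X\cup Y$. By the standing assumption of~\ref{4.1}, every element of $\bar{Y}$ is regular in $\bar{R}_\k$, so each identity reduces sensibly modulo~$\P$; reversing the recursive construction of~\ref{3.2} then yields elements $\bar{w}_i:=\bar{v}_i^{-1}\bar{u}_i\in\mathcal{Q}(\bar{R}_\k)$ ($1\le i\le 2d$) together with polynomials $\bar{P}^s_{p(k)},\bar{Q}^s_{p(k)}\in\k[\bar{w}_1,\dots,\bar{w}_{2d}]$ such that: the Weyl relations (\ref{1}), (\ref{2}) hold for the $\bar{w}_i$; each $\bar{w}_t$ commutes with every $\bar{e}_{ij}$ from Proposition~\ref{quot} (reversing (\ref{D''})), so $\bar{w}_t\in\mathfrak{D}_\k$; each $\bar{Q}^s_{p(k)}(\bar{w})$ is regular in $\bar{R}_\k$ by (\ref{qq}); and the analogue of (\ref{3}), namely $\bar{c}^s_{p(k)}=\bar{Q}^s_{p(k)}(\bar{w})^{-1}\bar{P}^s_{p(k)}(\bar{w})$, expresses the matrix coefficients of the generators $\bar{g}_k$ of $\bar{R}_\k$ relative to $\mathfrak{C}_\k$ as elements of the Ore localisation of $B_\k$ inside $\mathfrak{D}_\k$. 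Combined with (\ref{d}) and $\mathcal{Q}(\bar{R}_\k)\cong\Mat_{l'}(\mathfrak{D}_\k)$ (Proposition~\ref{quot}), this forces $\mathfrak{D}_\k$ to coincide with the Ore ring of fractions of $B_\k$.

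The Weyl relations give a surjective $\k$-algebra map $\phi\colon\mathbf{A}_d(\k)\twoheadrightarrow B_\k$. Recall that $\mathbf{A}_d(\k)$ is Azumaya of rank $p^{2d}$ over its centre $C=\k[w_1^p,\dots,w_{2d}^p]$, so every two-sided ideal is of the form $J\cdot\mathbf{A}_d(\k)$ for some ideal $J\subset C$; writing $\ker\phi=J\cdot\mathbf{A}_d(\k)$, the algebra $B_\k\cong\mathbf{A}_d(\k)\otimes_C C/J$ is Azumaya of rank $p^{2d}$ over $C/J$ with $Z(B_\k)=C/J$, and its Ore ring of fractions satisfies $Z(\mathfrak{D}_\k)=\mathop{\mathrm{Frac}}(C/J)$. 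On the other hand, $\mathfrak{C}_\k\cong\Mat_{l'}(\k)$ is central simple over $\k$, so $Z(\mathfrak{D}_\k)=Z(\mathcal{Q}(\bar{R}_\k))$; and Proposition~\ref{Krull} together with Posner's theorem applied to the prime PI-ring $\bar{R}_\k$ gives transcendence degree $2d$ for $Z(\mathcal{Q}(\bar{R}_\k))$ over~$\k$. Hence $\mathop{\mathrm{Frac}}(C/J)$ has transcendence degree $2d$, forcing $\dim C/J=2d$ and $J=0$. Therefore $B_\k\cong\mathbf{A}_d(\k)$, $\mathfrak{D}_\k\cong\mathcal{Q}(\mathbf{A}_d(\k))$ and $\mathcal{Q}(\bar{R}_\k)\cong\Mat_{l'}\bigl(\mathcal{Q}(\mathbf{A}_d(\k))\bigr)$. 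The main obstacle is the meticulous bookkeeping in the reversibility step; this relies essentially on Lemma~\ref{reg} to ensure that every denominator produced in the recursive construction of~\ref{3.2} remains regular modulo~$\P$.
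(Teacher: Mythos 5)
Your proposal follows the paper's strategy closely: recover the $\bar{w}_i$ by reversing the recursive identities from \S\ref{3.2}, identify $\mathfrak{D}_\k$ with the Ore localisation of the $\k$-subalgebra they generate, invoke the fact that every two-sided ideal of $\mathbf{A}_d(\k)$ in characteristic $p$ is centrally generated (equivalently, that $\mathbf{A}_d(\k)$ is Azumaya over $\k[w_1^p,\ldots,w_{2d}^p]$), and use Proposition~\ref{Krull} to kill the central ideal $J$. This is essentially the same route as the paper; the Azumaya language is a mild repackaging of what the paper phrases as ``every two-sided ideal is centrally generated''.

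There is, however, a genuine circularity in the transcendence-degree step. You write ``Posner's theorem applied to the prime PI-ring $\bar{R}_\k$ gives transcendence degree $2d$ for $Z(\mathcal{Q}(\bar{R}_\k))$'' and treat $\mathop{\mathrm{Frac}}(C/J)$ as a single field. But at this stage of the argument one only knows that $\bar{R}_\k$ is \emph{semiprime}: the primeness of $\bar{R}_\k$ is precisely Corollary~\ref{prime}, which is derived \emph{from} Proposition~\ref{weyl-case}, so it cannot be assumed in its proof. Without primeness, $\mathcal{Q}(\bar{R}_\k)\cong\bigoplus_{j=1}^\nu\mathcal{Q}(R_\k/I_j)$ and $Z(\mathcal{Q}(\bar{R}_\k))\cong\mathcal{Q}(Z(\bar{R}_\k))$ is merely a finite product of fields, and $C/J$ need only be reduced rather than a domain. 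The paper handles this carefully: Proposition~\ref{Krull} gives $\dim Z(\bar{R}_\k)=2d$, so \emph{at least one} of the direct-factor fields of $\mathcal{Q}(Z(\bar{R}_\k))$ has transcendence degree $2d$; if $J\neq 0$ (equivalently, if $\beta\colon Z_d(\k)\to\bar{Z}_d$ were not injective) then \emph{every} factor would have transcendence degree $<2d$, a contradiction. Your argument can be repaired along these lines by replacing ``the prime PI-ring $\bar{R}_\k$'' with the semiprime decomposition and the single field $\mathop{\mathrm{Frac}}(C/J)$ with a finite product of fields, but as written the appeal to Posner's theorem presupposes the conclusion.
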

\begin{proof}
First recall from \ref{4.1} that given $x\in R_A$ we write $\bar{x}$
for the image of $x\otimes 1$ in
$\bar{R}_\k=(R_A\otimes_A\k_\P)/N_\k$. Repeating the argument used
at the beginning of the proof of Proposition~\ref{quot} we observe
that for every $y\in Y$ the element $\bar{y}$ is regular in
$\bar{R}_\k$.

\smallskip

The subset $\bar{X}\cup\bar{Y}$ of $\bar{R}_\k$ contains elements
satisfying the relations (\ref{uv}), (\ref{10}), (\ref{1'''}),
(\ref{2'''}), (\ref{13}), (\ref{06}), (\ref{18}), (\ref{18'}),
(\ref{18''}), (\ref{15}), (\ref{16}), (\ref{17}), (\ref{D1}),
(\ref{D2}), (\ref{D''}). Since all elements of $\bar{Y}$ involved in
these relations are regular and each step of the procedure described
in \ref{3.2} is reversible, we can find elements $w_1,\ldots,
w_{2d}$ in $\mathcal{Q}(\bar{R}_\k)$ satisfying the relations
(\ref{1}) and (\ref{2}). We denote by $\mathcal{D}'_\k$ the
$\k$-subalgebra of $\mathcal{Q}(\bar{R}_\k)$ generated by the
$w_i$'s. Clearly, $\mathcal{D}'_\k$ is a homomorphic image of the
Weyl algebra $\mathbf{A}_{d}(\k)$.

\smallskip

By (\ref{d'}), we have the inclusion $\mathcal{D}'_\k\subset
\mathfrak{D}_\k$.  Since the images of the $Q_{i;\,p(k)}^s$'s with
$i=1,2$ are regular in $\bar{R}_\k$ and
$\mathcal{Q}(\bar{R}_\k)\,=\,(\mathcal{C}(Z(\bar{R}_\k))^{-1}\bar{R}_\k$
by our earlier remarks, we can combine (\ref{qq}), (\ref{3}),
(\ref{d}) and (\ref{e}) with the equality
 $\mathcal{Q}(\bar{R}_\k)\,=\,\mathfrak{C}_\k\cdot\mathfrak{D}_\k$  to
 obtain
\begin{equation}\label{eqq}
\mathfrak{D}_\k\,=\,(\mathcal{C}(Z(\bar{R}_\k))^{-1}\mathcal{D}_\k'.
\end{equation}
Since it follows from Proposition~\ref{quot} that $\mathfrak{D}_\k$
is a semiprime ring, (\ref{eqq}) yields that $\mathcal{D}'_\k$ has
no nonzero nilpotent ideals, i.e. the ring $\mathcal{D}_\k'$ is
semiprime, too.

\smallskip

Let $\mathcal{C}(Z(\mathcal{D}'_\k))$ denote the set of all regular
elements of $\mathcal{D}_\k'$ contained in the centre of
$\mathcal{D}'_\k$. It is immediate from (\ref{eqq}) that
$\mathcal{C}(Z(\mathcal{D}'_\k))\subseteq \,
\mathcal{C}(Z(\mathfrak{D}_\k))$. So
$\mathcal{C}(Z(\mathcal{D}'_\k))$ is a multiplicative subset of
regular elements of $\mathcal{Q}(\bar{R}_\k)$ satisfying the left
and right Ore condition.

 Being a homomorphic image of $\mathbf{A}_d(\k)$ the $\k$-algebra
 $\mathcal{D}_\k'$ is finitely generated as a module over its centre.
 As $\mathcal{D}'_\k$ is a semiprime ring, applying
[\cite{AH}, Thm.~2] yields that $\mathcal{Q}(\mathcal{D}_\k')$ is
obtained from $\mathcal{D}'_\k$ by inverting the elements from
$\mathcal{C}(Z(\mathcal{D}'_\k))$. Combining this with (\ref{qq})
and (\ref{3}) we now deduce that
$\bar{c}_{ij}^k\in(\mathcal{C}(Z(\mathcal{D}'_\k)))^{-1}\mathcal{D}_\k'$
for all $1\le i,j\le l'$ and $1\le k\le n$. But then (\ref{d})
forces $\bar{g}_k\in
(\mathcal{C}(Z(\mathcal{D}'_\k)))^{-1}\mathfrak{C}_\k\cdot\mathcal{D}_\k'$
for all $1\le k\le n$. This, in turn, yields that
$(\mathcal{C}(Z(\mathcal{D}'_\k)))^{-1}\mathcal{D}_\k'$ contains
$\bar{R}_\k$ and hence $Z(\bar{R}_\k)$. Then our remarks earlier in
the proof show that
\begin{equation}\label{dk}\mathfrak{D}_\k\,=\,(\mathcal{C}(Z(\mathcal{D}'_\k)))^{-1}\mathcal{D}_\k'\,=\,
\mathcal{Q}(\mathcal{D}'_\k).\end{equation}

Let $Z_d(\k)$ denote the centre of the Weyl algebra
$\mathbf{A}_d(\k)$. It is well known and easily seen that
$\mathbf{A}_d(\k)$ is a free $Z_d(\k)$-module of rank $p^{2d}$ and
every two-sided ideal of $\mathbf{A}_d(\k)$ is centrally generated.
Furthermore, $Z_d(\k)$ is a polynomial algebra in $2d$ variables
over $\k$. Since $\mathcal{D}'_\k$ is a homomorphic image of
$\mathbf{A}_d(\k)$, its centre, $\bar{Z}_d$, is a homomorphic image
of $Z_d(\k)$. We let
$\beta\colon\,Z_d(\k)\twoheadrightarrow\,\bar{Z}_d$ denote the
corresponding homomorphism of $\k$-algebras.

\smallskip

Recall from \ref{4.1} that $N_\k=\bigcap_{i=1}^\nu I_i$ where
$I_1,\ldots, I_\nu$ are the minimal primes of $R_\k$. By the theory
of semiprime Noetherian PI-algebras finite over their centres, all
quotients $R_\k/I_j$ are prime and
$\mathcal{Q}(\bar{R}_\k)\,\cong\,\mathcal{Q}(R_\k/I_1)\oplus\cdots\oplus
\mathcal{Q}(R_\k/I_\nu)$ as $\k$-algebras. Moreover, each direct
summand $\mathcal{Q}(R_\k/I_j)$ is a simple algebra finite
dimensional over its centre $\mathcal{Q}(Z(R_\k/I_j))$; see
[\cite{R}], [\cite{AH}]. In particular, this shows that
$\mathcal{Q}(Z(\bar{R}_\k))=(\mathcal{C}(Z(\bar{R}_\k))^{-1}Z(\bar{R}_\k)$
injects into $\prod_{j=1}^\nu\,\mathcal{Q}(\bar{R}_\k/I_j)$, a
direct product of fields.

\smallskip

On the other hand, the algebra $Z(\bar{R}_\k)$ being reduced,
$\mathcal{Q}(Z(\bar{R}_\k))$ itself is a direct product of fields.
Furthermore, Proposition~\ref{Krull} implies that at least one of
the fields involved as direct factors of
$\mathcal{Q}(Z(\bar{R}_\k))$ has transcendence degree over $\k$
equal to $2d$. It follows that
\begin{equation}\label{tr}
{\rm tr.\,deg}_\k\,\mathcal{Q}(Z(R_\k/I_\ell)) = 2d\ \quad\mbox {for
some }\ \ell\le \nu.
\end{equation} Since
$\mathcal{Q}(\bar{R}_\k)\cong\mathfrak{C}_\k\otimes\mathcal{Q}(\mathcal{D}_\k')$,
it follows from our discussion earlier in the proof that
$\mathcal{Q}(Z(\bar{R}_\k))\cong\mathcal{Q}(\bar{Z}_d)$ as
$\k$-algebras. As the algebra $\bar{R}_\k$ is semiprime, its centre
$Z(\bar{R}_\k)$ is reduced and hence the ring of fractions
$\mathcal{Q}(\bar{Z}_d)$ is a direct product of fields. If
$\beta\colon\,Z_d(\k)\twoheadrightarrow \bar{Z}_d$ is not injective,
then $\dim \bar{Z}_d<2d$ and hence all fields involved as direct
factors of $\mathcal{Q}(\bar{Z}_d)$ have transcendence degree over
$\k$ less than $2d$. Since this contradicts (\ref{tr}), the map
$\beta$ must be injective. Then $\bar{Z}_d\,\cong\,Z_d(\k)$,
implying that
$\mathcal{Q}(\mathfrak{D}_\k)\cong\mathcal{Q}(\mathbf{A}_d(\k))$ and
$\mathcal{Q}(\bar{R}_\k)\cong\mathfrak{C}_\k\otimes
\mathcal{Q}(\mathbf{A}_d(\k))\cong
\Mat_{l'}\big(\mathcal{Q}(\mathbf{A}_d(\k))\big)$, as claimed.
\end{proof}
\begin{corollary}\label{prime} If
$\mathcal{D}_M\cong \mathcal{Q}(\mathbf{A}_{d}(\CC))$ as
$\CC$-algebras and the admissible ring $A$ satisfies all the
requirements of \ref{4.1}, then $\bar{R}_\k$ is a prime ring.
\end{corollary}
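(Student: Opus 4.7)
The plan is to deduce primeness of $\bar{R}_\k$ directly from the structure of its Goldie quotient, using Proposition \ref{weyl-case} together with the decomposition of $\mathcal{Q}(\bar{R}_\k)$ as a product of simple Artinian rings recalled in its proof.

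First I would invoke Proposition \ref{weyl-case} to get
$$
\mathcal{Q}(\bar{R}_\k)\,\cong\,\Mat_{l'}\big(\mathcal{Q}(\mathbf{A}_d(\k))\big).
$$
The next step is to check that the right-hand side is a \emph{simple} Artinian ring. The Weyl algebra $\mathbf{A}_d(\k)$ is a free module of rank $p^{2d}$ over its centre $Z_d(\k)$, a polynomial algebra in $2d$ variables, and every two-sided ideal of $\mathbf{A}_d(\k)$ is centrally generated (facts already recorded in the proof of Proposition \ref{weyl-case}). Consequently $\mathcal{Q}(\mathbf{A}_d(\k))$ is a central simple algebra of dimension $p^{2d}$ over the field $\mathcal{Q}(Z_d(\k))$, and matrices over it are again simple Artinian.

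Now I would combine this with the decomposition
$$
\mathcal{Q}(\bar{R}_\k)\,\cong\,\mathcal{Q}(R_\k/I_1)\oplus\cdots\oplus\mathcal{Q}(R_\k/I_\nu)
$$
into simple Artinian blocks, which was established in the proof of Proposition \ref{weyl-case} and is a general feature of semiprime Noetherian PI-rings that are finite over their centres; here $I_1,\ldots,I_\nu$ are the minimal primes of $R_\k$ and $N_\k=\bigcap_j I_j$. Since a simple Artinian ring admits no nontrivial direct product decomposition, comparing the two presentations of $\mathcal{Q}(\bar{R}_\k)$ forces $\nu=1$.

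Therefore $N_\k=I_1$ is a prime ideal of $R_\k$, and $\bar{R}_\k=R_\k/N_\k$ is a prime ring, as required. I do not expect any serious obstacle: the only input beyond Proposition \ref{weyl-case} is the elementary observation that $\mathcal{Q}(\mathbf{A}_d(\k))$ is simple (not merely prime), which rules out any splitting of $\mathcal{Q}(\bar{R}_\k)$ into more than one block and thereby eliminates all minimal primes of $R_\k$ other than $N_\k$ itself.
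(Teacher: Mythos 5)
Your argument is correct and is essentially the paper's own proof spelled out in slightly more detail: the paper simply observes that $\mathcal{Q}(\bar{R}_\k)=\mathcal{C}(Z(\bar{R}_\k))^{-1}\bar{R}_\k$ is prime Artinian by Proposition~\ref{weyl-case} (since $\mathcal{Q}(\mathbf{A}_d(\k))$ is prime), and concludes at once that $\bar{R}_\k$ is prime; your route of comparing the matrix presentation with the block decomposition $\bigoplus_j\mathcal{Q}(R_\k/I_j)$ to force $\nu=1$ is the same underlying fact that a semiprime Noetherian ring is prime iff its Goldie quotient is simple Artinian.
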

\begin{proof}
Since
$\mathcal{Q}(\bar{R}_\k)\,=\,\mathcal{C}(Z(\bar{R}_\k))^{-1}\bar{R}_\k$
and the ring $\mathcal{Q}(\mathbf{A}_{d}(\k))$ is prime, this is an
immediate consequence of Proposition~\ref{weyl-case}.
\end{proof}
\begin{conj}\label{conj}
We conjecture that under the above assumptions on $A$ the ring
$\bar{R}_\k$ is prime for any finite dimensional simple Lie algebra
$\g$ and any primitive ideal $I=I_M$.
\end{conj}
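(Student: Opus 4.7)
The plan is to reduce the primeness of $\bar{R}_\k$ to showing that its centre $Z(\bar{R}_\k)$ is an integral domain. For a semiprime Noetherian PI-ring one has $\mathcal{Q}(\bar{R}_\k) \cong \prod_{j=1}^{\nu} \mathcal{Q}(\bar{R}_\k/I_j)$, so the centre of the Goldie ring decomposes as $\mathcal{Q}(Z(\bar{R}_\k)) \cong \prod_{j=1}^\nu Z(\mathcal{Q}(\bar{R}_\k/I_j))$, a product of $\nu$ fields. Hence $\bar{R}_\k$ is prime if and only if $Z(\bar{R}_\k)$ possesses a unique minimal prime.

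First I would verify that $Z_p(\bar{R}_\k)$ is already an integral domain. Its maximal spectrum $\mathcal{V}_\P(M)$ coincides, by Remark~\ref{Psi}, with the Zariski closure of the single coadjoint orbit $(\mathrm{Ad}^*\,G_\k)\,\Psi$ and is therefore irreducible. Since $\bar{R}_\k$ is semiprime and $Z_p(\bar{R}_\k)$ is a commutative subalgebra, the centre of any nilpotent element of $Z_p(\bar{R}_\k)$ generates a nilpotent two-sided ideal; consequently $Z_p(\bar{R}_\k)$ is reduced, and an irreducible reduced affine algebra is a domain. The real task then becomes showing that the finite, $G_\k$-equivariant morphism $\mathrm{Specm}\,Z(\bar{R}_\k) \to \mathcal{V}_\P(M)$ has irreducible source, i.e.\ that no additional minimal primes are introduced in passing from $Z_p(\bar{R}_\k)$ to the full centre $Z(\bar{R}_\k)$.

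The approach that looks most promising is to leverage the Morita equivalence $U_\Psi(\g_\k) \cong \mathrm{Mat}_{p^{d}}\big(U_\Psi(\g_\k,e)\big)$ from~\ref{2.5}(c), which identifies the simple $\bar{R}_\k$-module $\widetilde{M}_{\k,\Psi}$ of dimension $lp^d$ with the simple $U_\Psi(\g_\k,e)$-module $M_\k$ of dimension $l=\dim M$. Varying $\Psi$ along the open $G_\k$-orbit in $\mathcal{V}_\P(M)$ produces a $G_\k$-equivariant family of Azumaya points of $\mathrm{Specm}\,Z(\bar{R}_\k)$, and one would like to conclude that this family is connected and hence confined to a single irreducible component. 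Since the Azumaya locus of a semiprime PI-algebra is open and dense in the maximal spectrum of the centre, this would force $Z(\bar{R}_\k)$ to be a domain. The principal obstacle is that, without the hypothesis on the Goldie field $\mathcal{D}_M$ used in Proposition~\ref{weyl-case}, one lacks a concrete description of the generic division algebra quotient of $\mathcal{Q}(\bar{R}_\k)$; this makes it difficult to exclude the possibility that $\mathrm{Specm}\,Z(\bar{R}_\k) \to \mathcal{V}_\P(M)$ acquires several $G_\k$-stable components under the action of some finite \'etale Galois group arising from the action of $\Gamma(e)$ or from the central character of $I_M$.
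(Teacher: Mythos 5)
The statement you set out to prove is labelled as a \emph{conjecture} in the paper (Conjecture~\ref{conj}), and the paper offers no proof of it; indeed, the author explicitly singles it out as an open problem and, in Remark~\ref{rr-r}, discusses why it appears hard to settle by the methods of the paper. So there is no ``paper's own proof'' against which your attempt can be measured, and the outcome of a successful attempt would be a new theorem, not a confirmation.

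With that understood, there is a concrete misstep in your argument that needs pointing out. You claim that by Remark~\ref{Psi} the variety $\mathcal{V}_\P(M)={\rm Specm}\,Z_p(\bar{R}_\k)$ ``coincides with the Zariski closure of the single coadjoint orbit $({\rm Ad}^*G_\k)\Psi$ and is therefore irreducible.'' But Remark~\ref{Psi} (following the proof of Proposition~\ref{Krull}) asserts only that this orbit is \emph{open} in $\mathcal{V}_\P(M)$; it does not assert that the orbit is dense. Since all irreducible components of $\mathcal{V}_\P(M)$ are $G_\k$-stable (the components are permuted by $G_\k$, which is connected), the orbit is entirely contained in and open in one top-dimensional component, while $\mathcal{V}_\P(M)$ may in principle possess further components of dimension $\le 2d$. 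This is precisely the possibility the author cannot rule out in general — it is why \S\ref{4.4} passes from $\bar{R}_\k$ to a prime quotient $\mathcal{R}=\bar{R}_\k/\bar{I}_\ell$ along a well-chosen minimal prime, and why Remark \ref{Psi} and the discussion after Proposition \ref{Krull} are phrased so cautiously. Deducing that $Z_p(\bar{R}_\k)$ is already a domain would be a genuine advance, not an extraction from the text.

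Your closing paragraph is honest that the ``real task'' — showing that ${\rm Specm}\,Z(\bar{R}_\k)\to\mathcal{V}_\P(M)$ has irreducible source, equivalently that $Z(\bar{R}_\k)$ is a domain — remains unresolved. That is exactly where the conjecture stands. A further caveat about the strategy you sketch: even if one could show the Azumaya locus ${\rm Az}(\bar{R}_\k)$ is connected and dense in ${\rm Specm}\,Z(\bar{R}_\k)$, that alone would give connectedness, not irreducibility, of the maximal spectrum, so an additional input (e.g.\ normality of $Z(\bar{R}_\k)$, or constancy of PI-degree across minimal primes together with a deeper structural fact) would still be required. The obstruction you name — the absence of a concrete description of the generic division algebra quotient of $\mathcal{Q}(\bar{R}_\k)$ outside the Weyl-field case of Proposition~\ref{weyl-case} — is essentially the same obstruction the author raises in Remark~\ref{rr-r}, where he suggests that one would at least need $\mathcal{D}_M$ to contain an appropriate $A$-subalgebra whose reduction modulo $\P$ has controlled PI-degree, or to know that $\mathcal{D}_M$ is a crossed product of a Weyl skew-field with a finite group.
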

\subsection{}\label{4.4}
Write $\bar{I}_j$ for the image the minimal prime $I_j$ of $R_\k$ in
$\bar{R}_\k=R_\k/N_\k$. Since each quotient $\bar{R}_\k/\bar{I}_j$
is a prime ring, its central subalgebra
$Z_p(\bar{R}_\k)/\bar{I}_j\cap Z_p(\bar{R}_\k)$ is a domain. Since
the ${\rm PI}$-algebras module-finite over their Noetherian centres
enjoy the lying-over, going-up and incomparability properties for
prime ideals, every $\bar{I}_j\cap Z_p(\bar{R}_\k)$ is a minimal
prime of $Z_p(\bar{R}_\k)$ and every minimal prime of
$Z_p(\bar{R}_\k)$ is one of the $\bar{I}_j\cap Z_p(\bar{R}_\k)$'s;
see [\cite{McR}, Ch.~10] for more detail. It follows that there is
an $\ell\in\{1,\ldots,\nu\}$ such that $\dim
Z_p(\bar{R}_\k)\,=\,\dim Z_p(\bar{R}_\k)/\bar{I}_\ell\cap
Z_p(\bar{R}_\k).$ We now define
$\mathcal{R}:=\bar{R}_\k/\bar{I}_\ell$ and
$Z_p(\mathcal{R}):=Z_p(\bar{R}_\k)/\bar{I}_\ell\cap
Z_p(\bar{R}_\k)$. Then $\mathcal{R}$ is a prime Noetherian ring
which is finitely generated as a $Z_p(\mathcal{R})$-module.

\smallskip

Since $G_\k$ is a connected group, every minimal prime  $\bar{I}_j$
of $\bar{R}_\k$ is $G_\k$-stable. Therefore, $G_\k$ acts on
$\mathcal{R}$ as algebra automorphisms. Recall from \ref{4.2} the
Zariski closed set $\mathcal{V}_\P(M)\subset\g^*_\k$ which we have
identified with the maximal spectrum of $Z_p(\bar{R}_\k)$. As
explained in the proof of Proposition~\ref{Krull}, one of the
components of $\mathcal{V}_\P(M)$ contains a linear function
$\Psi\in\chi+\m_\k^\perp$ and $\dim ({\rm Ad}^*\,G)\,\Psi=2d$.

\smallskip

By construction, the zero locus of $\bar{I}_\ell\cap
Z_p(\bar{R}_\k)$ in $\mathcal{V}_\P(M)$ is an irreducible component
of maximal dimension in $\mathcal{V}_\P(M)$. Since $\dim
Z_p(\bar{R}_\k)=2d$ by Proposition~\ref{Krull} and all irreducible
components of $\mathcal{V}_\P(M)$ are $G_\k$-stable, we see that
$\Psi$, too, lies in an irreducible component of maximal dimension
of $\mathcal{V}_\P(M)$. But then the above discussion shows that we
can choose $\ell\in\{1,\ldots,\nu\}$ such that the zero locus of
$\bar{I}_\ell\cap Z_p(\bar{R}_\k)$ in $\mathcal{V}_\P(M)$ coincides
with the Zariski closure of $({\rm Ad}^*\,G)\,\Psi$ in $\g_\k^*$.
Therefore, no generality will be lost by assuming that $({\rm
Ad}^*\,G)\,\Psi$ is the unique open dense orbit of maximal spectrum
${\rm Specm}\,Z_p(\mathcal{R})\subset \g_\k^*$.

\smallskip

Since $\mathcal{R}$ is a Noetherian $Z_p(\mathcal{R})$-module, the
centre $Z(\mathcal{R})$ is finitely generated and integral over
$Z_p(\mathcal{R})$. Hence $Z_p(\mathcal{R})$ is an affine algebra
over $\k$ and the morphism $$\mu\colon\,{\rm
Specm}\,Z(\mathcal{R})\twoheadrightarrow {\rm
Specm}\,Z_p(\mathcal{R})$$ induced by inclusion
$Z_p(\mathcal{R})\hookrightarrow Z(\mathcal{R})$ is finite. In
particular, $\dim Z(\mathcal{R})=\dim Z_p(\mathcal{R})=2d$. As the
ring $\mathcal{R}$ is prime, the centre $Z(\mathcal{R})$ is a domain
and hence the affine variety $\mathcal{V}(\mathcal{R}):={\rm
Specm}\,Z(\mathcal{R})$ is irreducible. By our choice of $A$, the
rational action of the group $G_\k$ on $U(\g_\k)$ induces that on
$Z(\mathcal{R})$. Thus, $\mathcal{V}(\mathcal{R})$ is an irreducible
affine $G_\k$-variety.
\begin{prop}\label{orbit}
The following are true:

\begin{itemize}

\item[(i)\,]
The finite morphism
$\mu\colon\,\mathcal{V}(\mathcal{R})\twoheadrightarrow {\rm
Specm}\,Z_p(\mathcal{R})$ is $G_\k$-equivariant and the inverse
image of  $({\rm Ad}^*\,G)\,\Psi\subset {\rm
Specm}\,Z_p(\mathcal{R})$ under $\mu$ is a unique open dense
$G_\k$-orbit of $\mathcal{V}(\mathcal{R})$.

\smallskip

\item[(ii)\,]
The stabiliser $(G_\k)_c=\{g\in G_\k\,|\,\,g\cdot c=c\}$ of any
$c\in\mu^{-1}(\Psi)$ has the property that
$Z_{G_\k}(\Psi)^\circ\subseteq (G_\k)_c\subseteq Z_{G_\k}(\Psi)$.

\smallskip

\item[(iii)\,] The coadjoint stabiliser $Z_{G_\k}(\Psi)$ acts transitively on the
fibre $\mu^{-1}(\Psi)$.
\end{itemize}
\end{prop}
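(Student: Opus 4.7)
The plan is to deduce all three parts from a single dimension-and-irreducibility argument, exploiting the fact that $\mu$ is a finite equivariant morphism between two irreducible $G_\k$-varieties of the same dimension $2d$. Once the geometric setup is in place, (ii) follows by stabiliser bookkeeping and (iii) drops out of (i) automatically, so the main step is establishing (i).

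First I would verify $G_\k$-equivariance of $\mu$. Both $Z_p(\bar{R}_\k)$ and $Z(\bar{R}_\k)$ are $G_\k$-stable subalgebras of $\bar{R}_\k$ (as noted in the proof of Proposition~\ref{Krull}), and this property descends to their quotients $Z_p(\mathcal{R})$ and $Z(\mathcal{R})$. The inclusion $Z_p(\mathcal{R}) \hookrightarrow Z(\mathcal{R})$ is therefore $G_\k$-equivariant, and so is the induced finite morphism $\mu$ on maximal spectra. Since $({\rm Ad}^*\,G_\k)\,\Psi$ is open dense in ${\rm Specm}\,Z_p(\mathcal{R})$ by the choice of $\ell$ made just before the proposition, the preimage $U := \mu^{-1}\big(({\rm Ad}^*\,G_\k)\,\Psi\big)$ is a non-empty open subset of the irreducible variety $\mathcal{V}(\mathcal{R})$, hence open and dense.

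Part (ii) is then quick. For the upper bound, if $g \in (G_\k)_c$ with $c \in \mu^{-1}(\Psi)$, then by equivariance $\Psi = \mu(c) = \mu(g \cdot c) = g \cdot \Psi$, so $g \in Z_{G_\k}(\Psi)$. For the lower bound, the fibre $\mu^{-1}(\Psi)$ is finite (because $\mu$ has finite fibres) and preserved by $Z_{G_\k}(\Psi)$, so the connected group $Z_{G_\k}(\Psi)^\circ$ acts trivially on it, giving $Z_{G_\k}(\Psi)^\circ \subseteq (G_\k)_c$.

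The main step is uniqueness of the $G_\k$-orbit inside $U$. Combining the upper bound from (ii) with $\dim ({\rm Ad}^*\,G_\k)\,\Psi = 2d$ gives
$$\dim (G_\k \cdot c) \,=\, \dim G_\k - \dim (G_\k)_c \,\geq\, \dim G_\k - \dim Z_{G_\k}(\Psi) \,=\, 2d$$
for every $c \in \mu^{-1}(\Psi)$. Since $G_\k \cdot c$ is locally closed with irreducible closure of dimension $2d$ sitting inside the irreducible variety $\mathcal{V}(\mathcal{R})$ of the same dimension, $\overline{G_\k \cdot c} = \mathcal{V}(\mathcal{R})$, so $G_\k \cdot c$ is open and dense in $\mathcal{V}(\mathcal{R})$. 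If two distinct $c_1, c_2 \in \mu^{-1}(\Psi)$ lay in different $G_\k$-orbits, these orbits would be disjoint non-empty open subsets of an irreducible variety, which is impossible. Therefore $U$ is a single open dense $G_\k$-orbit, finishing (i); and then (iii) is immediate, for given $c_1, c_2 \in \mu^{-1}(\Psi)$ transitivity on $U$ yields $g \in G_\k$ with $g \cdot c_1 = c_2$, and $g \cdot \Psi = \mu(g \cdot c_1) = \mu(c_2) = \Psi$ forces $g \in Z_{G_\k}(\Psi)$.
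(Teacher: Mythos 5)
Your proof is correct and follows essentially the same route as the paper: verify $G_\k$-equivariance of $\mu$, show each orbit $G_\k\cdot c$ for $c\in\mu^{-1}(\Psi)$ is open in the $2d$-dimensional irreducible variety $\mathcal{V}(\mathcal{R})$, conclude by irreducibility that there is a single open dense orbit, and read off (ii) and (iii) from equivariance and finiteness of the fibre. The only organisational difference is minor: you establish (ii) first and extract the dimension bound $\dim(G_\k\cdot c)\geq 2d$ from the stabiliser inclusion $(G_\k)_c\subseteq Z_{G_\k}(\Psi)$, whereas the paper gets the same dimension count directly from the finiteness of the fibres of $\mu$ over $({\rm Ad}^*\,G_\k)\Psi$ and then proves (ii) afterwards; both derivations are equally valid.
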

\begin{proof}
It is clear from our earlier remarks that $\mu$ is a finite morphism
equivariant under the action of $G_\k$. Let
$\mathcal{V}(\mathcal{R})_{\rm reg}$ denote the inverse image of
$({\rm Ad}^*\,G)\,\Psi$ under $\mu$. Since the map $\mu$ is
$G_\k$-equivariant, we have that $\mathcal{V}(\mathcal{R})_{\rm
reg}\,=\,\bigcup_{\,c\in\, \mu^{-1}(\Psi)}\,G_\k\cdot c.$ As the
morphism $\mu$ is finite, $\mu^{-1}(\Psi)$ is a finite set and $\dim
\mathcal{V}(\mathcal{R})\,=2d\,=\,({\rm Ad}^*\,G)\,\Psi$. From this
it is immediate that each orbit $G_\k\cdot c$ with $c\in
\mu^{-1}(\Psi)$ is Zariski open in $\mathcal{V}(\mathcal{R})$. As
the variety $\mathcal{V}(\mathcal{R})$ is irreducible, we see that
$(G_\k\cdot c)\cap (G_\k\cdot c')\neq \emptyset$ for any two
$c,c'\in \mu^{-1}(\Psi)$. This forces $G_\k\cdot c=G_\k\cdot c'$ for
all $c,c'\in \mu^{-1}(\Psi)$, implying that
$\mu^{-1}\big(\mathcal{V}(\mathcal{R})_{\rm reg}\big)\,=\,G_\k\cdot
c$ for any $c\in\mu^{-1}(\Psi)$. This proves statement~(i).

\smallskip

If $c\in\mu^{-1}(\Psi)$ and $g\in (G_\k)_c$, then
$$\Psi=\mu(c)=\mu(g\cdot c)=({\rm Ad}^*\,g)\,\mu(c)=({\rm
Ad}^*\,g)\,\Psi.$$ Therefore, $(G_\k)_c\subseteq Z_{G_\k}(\Psi)$. On
the other hand, the finite set $\mu^{-1}(c)$ is stable under the
action of $Z_{G_\k}(\Psi)$. As $G_\k$ acts regularly on the affine
algebraic variety $\mathcal{V}(\mathcal{R})$, it follows that the
stabiliser $(G_\k)_c$ of any $c\in\mu^{-1}(\Psi)$ is a Zariski
closed subgroup of finite index in $Z_{G_\k}(\Psi)$. So it must
contain the connected component of identity in $Z_{G_\k}(\Psi)$, and
statement~(ii) follows.

\smallskip

If $c, g(c)\in\mu^{-1}(\Psi)$ for some $g\in (G_\k)_c$, then
$\Psi=\mu(g(c))=g(\mu(c))=g(\Psi)$, forcing $g\in Z_{G_\k}(\Psi)$.
Thus, statement~(iii) is an immediate consequence of statement~(i).
\end{proof}
\begin{rem}
If $\mathcal{D}_M\cong\mathcal{Q}(\mathbf{A}_d(\CC))$, then
$\bar{R}_\k$ is a prime ring by Corollary~\ref{prime}. So in this
case we have that $\bar{R}_\k=\mathcal{R}$.
\end{rem}
\subsection{}\label{4.5}  Recall from [\cite{BS}], [\cite{R}],
[\cite{AH}] that any prime PI-ring $\mathcal A$ has a simple
Artinian ring of fractions $\mathcal{Q}(\mathcal{A})$ which
satisfies the same identities as $\mathcal{A}$ and is spanned by
$\mathcal{A}$ over its centre, $K$, which coincides with
$\mathcal{Q}(Z(\mathcal{A}))$. Moreover, $\dim_K\,
\mathcal{Q}(\mathcal{A})= d^2$, and after tensoring by a suitable
algebraic field extension $\tilde{K}$ of $K$, the ring
$\mathcal{Q}(\mathcal{A})$ becomes the matrix algebra
$\Mat_d(\tilde{K})$. Both $\mathcal{A}$ and
$\mathcal{Q}(\mathcal{A})$ satisfy all the polynomial identities of
$d\times d$ matrices over a commutative ring, but not those of
smaller matrices, and $d$ can be characterized as the least positive
integer such that $S_{2d}(X_1,\ldots,X_{2d})=0$ for all $X_1,\ldots,
X_{2d}\in\mathcal{A}$, where
$$S_{2d}(X_1,\ldots,X_{2d})\,:=\,\sum_{\sigma\in
\mathfrak{S}_{2d}}\,({\rm sgn}\,\sigma) X_{\sigma(1)}\cdots
X_{\sigma(2d)}.$$
\begin{defn}\label{deff}
The {\it PI-degree} of a prime PI-ring $\mathcal{A}$, denoted $
\mbox{PI-deg}(\mathcal{A})$, is defined as the least positive
integer $d$ such that $\mathcal{A}$ satisfies the {\it standard
identity} $S_{2d}\equiv 0$.
\end{defn}
\begin{defn}
We say that $\mathcal{A}$ is an {\it Azumaya algebra} over its
centre $Z(\mathcal{A})$ if $\mathcal{A}$ is a finitely generated
projective $Z(\mathcal{A})$-module and the natural map
$\mathcal{A}\otimes_{Z(\mathcal{A})}\mathcal{A}^{\rm op}\to\,{\rm
End}_{Z(\mathcal{A})}\,\mathcal{A}$ is an isomorphism.
\end{defn}
Now suppose that our PI-ring $\mathcal A$ is finitely generated over
its centre $Z(\mathcal{A})$ which, in turn, is an affine algebra
over $\k$. In this situation, it is known that
$\mbox{PI-deg}(\mathcal{A})=d(\mathcal{A})$, where $d(\mathcal{A})$
stands for the maximum $\k$-dimension of irreducible
$\mathcal{A}$-modules; see [\cite{BG}], for example. Let $V$ be an
irreducible $\mathcal{A}$-module, $P={\rm Ann}_{\mathcal{A}}\,V$ and
$\c=P\cap Z(\mathcal{A})$, a maximal ideal of $Z(\mathcal{A})$. It
follows from the Artin--Procesi theorem [\cite{McR}, Thm.~13.7.14]
that the equality $\dim V=\mbox{PI-deg}(\mathcal{A})$ holds if and
only if $\mathcal{A}_{\c}\,=\,\mathcal{A}\otimes_{Z(\mathcal{A})}
Z(\mathcal{A})_\c$ is an Azumaya algebra over the local ring
$Z(\mathcal{A})_\c$; see [\cite{BG}] for more detail.

\smallskip
The {\it Azumaya locus} of $\mathcal{A}$, denoted ${\rm
Az}(\mathcal{A})$, is defined as
$$
{\rm Az}(\mathcal{A}):=\{\c\in{\rm
Specm}\,Z(\mathcal{A})\,|\,\,\mathcal{A}_\c \, \mbox{ is an Azumaya
algebra}\,\}.
$$ The above discussion shows
that ${\rm Az}(\mathcal{A})$ consists of all $\c\in{\rm
Specm}\,Z(\mathcal{A})$ with
$\mathcal{A}_\c/\c\mathcal{A}_\c\,\cong\,\Mat_{d(\mathcal{A})}(\k)$,
whilst the Artin--Procesi theorem yields that ${\rm
Az}(\mathcal{A})$ is a nonempty Zariski open subset of ${\rm
Specm}\,Z(\mathcal{A})$; see [\cite{McR}, Thm.~13.7.14(iii)].
\subsection{}\label{4.6} In this subsection, we will prove
Theorems~{\rm A} and {\rm B}. First suppose that
$\mathcal{D}_M\cong\mathcal{Q}(\mathbf{A}_d(\CC))$. Then
Corollary~\ref{prime} says that $\bar{R}_\k=\mathcal{R}$ is a prime
ring. It follows from Proposition~\ref{weyl-case} that there exists
a finite algebraic extension $\widetilde{K}\cong\k(X_1,\ldots,
X_{2d})$ of the centre $K$ of $\mathcal{Q}(\mathcal{R})$ (identified
with $\mathcal{Q}(Z_{d}(\k))$, the centre of
$\mathcal{Q}(\mathbf{A}_d(\k))$) such that
$$\mathcal{Q}(\mathcal{R})\otimes_{K}\widetilde{K}\,\cong\,
\Mat_{l'}\big(\mathcal{Q}(\mathbf{A}_d(\k))\otimes_{K}\widetilde{K}\big)\,\cong
\,\Mat_{l'p^d}(\widetilde{K}).$$ As a result,
$\mbox{PI-deg}(\mathcal{R})=l'p^d$. On the other hand, since the
Azumaya locus of $\mathcal{R}$ is $G_\k$-stable and the dominant
morphism $\mu\colon\,{\rm
Specm}\,Z(\mathcal{R})=\mathcal{V}(\mathcal{R})\twoheadrightarrow\,{\rm
Specm}\,Z_p(\mathcal{R})$ from \ref{4.4} is $G_\k$-equivariant, it
must be that $\Psi\in \mu({\rm Az}(\mathcal{R}))$. But then
$\mu^{-1}(\Psi)\cap{\rm Az}(\mathcal{R})\neq \emptyset$. Applying
Proposition~\ref{orbit} now yields $\mu^{-1}(\Psi)\subset {\rm
Az}(\mathcal{R})$.

\smallskip

Let $\c(\Psi)$ denote the annihilator in $Z(\mathcal{R})$ of the the
irreducible $\mathcal{R}$-module $\widetilde{M}_{\k,\,\Psi}$
introduced in \ref{2.6}. Since $\c(\Psi)\in\mu^{-1}(\Psi)$, the
preceding remark shows that $\mathcal{R}_{\c(\Psi)}$ is an Azumaya
algebra. As $Z(\mathcal{R})_{\c(\Psi)}$ is a local ring, our
discussion in \ref{4.5} now yields that $\widetilde{M}_{\k,\,\Psi}$
is the only irreducible $\mathcal{R}_{\c(\Psi)}$-module (up to
isomorphism) and it has dimension equal to
$d(\mathcal{R})=\mbox{PI-deg}(\mathcal{R})$. Therefore,
$$l'p^{d}=\mbox{PI-deg}(\mathcal{R})=
\dim_\k\,\widetilde{M}_{\k,\,\Psi}=lp^d=(\dim_\CC\,M)p^d.$$ Since
$l'={\rm rk}\big(U(\g)/I_M\big)$, Theorem~{\rm B} follows.

\smallskip

It remains to prove Theorem~{\rm A}. Applying
Proposition~\ref{orbit} and arguing as before we obtain the
inclusion $\mu^{-1}(\Psi)\subset {\rm Az}(\mathcal{R})$ and hence
the equality $\mbox{PI-deg}(\mathcal{R})=lp^d$. On the other hand,
Proposition~\ref{quot} says that $\mathcal{Q}(\bar{R}_\k)\cong
\mathfrak{C}_\k\otimes\mathfrak{D}_\k$, where $\mathfrak{D}_\k$ is
the centraliser of $\mathfrak{C}_\k\cong\Mat_{l'}(\k)$ in
$\mathcal{Q}(\bar{R}_\k)$. Since $\mathcal{Q}(\bar{R}_\k)$ is a
semiprime Artinian ring, so is $\mathfrak{D}_\k$. Therefore,
$\mathfrak{D}_\k\cong \bigoplus_{j=1}^{\nu'}\,\mathfrak{D}_{\k,j}$
for some simple Artinian rings $\mathfrak{D}_{\k,j}$. But we know
that
$\mathcal{Q}(\bar{R}_\k)=\bigoplus_{j=1}^\nu\,\mathcal{Q}(\bar{R}_\k/\bar{I}_j)$
and each $\mathcal{Q}(\bar{R}_\k/\bar{I}_j)$ is a simple Artinian
ring; see our discussion in \ref{4.4}. Since
$\mathcal{Q}(\bar{R}_\k)\cong\bigoplus_{j=1}^{\nu'}\big(\mathfrak{C}_\k\otimes\mathfrak{D}_{\k,j}\big)$
and each $\mathfrak{C}_\k\otimes\mathfrak{D}_{\k,j}$ is a simple
Artinian ring, we now deduce that $\nu=\nu'$ and
$$\mathcal{Q}(\mathcal{R})\,=\,
\mathcal{Q}(\bar{R}_\k/\bar{I}_\ell)\,\cong\,\mathfrak{C}_\k\otimes\mathfrak{D}_{\k,\ell'}$$
for some $\ell'\le \nu$. As $\mathfrak{C}_\k\cong\Mat_{l'}(\k)$, our
discussion in \ref{4.5} then shows that $l'$ divides
$\mbox{PI-deg}(\mathcal{Q}(\mathcal{R}))=\mbox{PI-deg}(\mathcal{R})=lp^d$.
As $\Pi(A)$ contains almost all primes in $\N$, we can find $\P\in
{\rm Specm}\,A$ such that $l'$ is coprime to $p={\rm char}\,A/\P$.
Then we see that $l'={\rm rk}\big(U(\g)/I_M\big)$ must divide
$l=\dim_\CC\,M$, which completes the proof of Theorem~{\rm A}.
\begin{rem}\label{rr}
Let $\g=\mathfrak{sp}_{2n}$ with $n\ge 3$ and let $e\in\g$ be a
nilpotent element corresponding to partition $(2^n)$ of $2n$. We
have already mentioned in the Introduction that $I_{\rm
max}(\rho/2)$ is a completely prime primitive ideal of $U(\g)$ with
associated variety equal to $\overline{\O(e)}$. Hence $I_{\rm
max}(\rho/2)=I_M$ for some finite dimensional irreducible
$U(\g,e)$-module $M$. It is known (and not hard to see) that in the
present case the action of $\ad h$ on $\g$ gives rise to a short
$\Z$-grading
$$\g\,=\,\g(-2)\oplus\g(0)\oplus\g(2),\qquad\ \g(i)=\{x\in\g\,|\,\,[h,x]=ix\},$$
such that $\g(0)\cong\mathfrak{gl}(V)$ and $\g(2)\cong S^2(V)$ as
$\g(0)$-modules (here $V$ is an $n$-dimensional vector space over
$\CC$). Furthermore, $e\in S^2(V)$ corresponds to a {\it
nondegenerate} quadratic form on $V^*$ and the centraliser $\g_e$ is
a graded subspace of $\g(0)\oplus\g(2)$. Therefore,
$\g_e=\g_e(0)\oplus\g(2)$ and $\g_e(0)=\g_e\cap\g(0)$ is isomorphic
to $\mathfrak{so}(V)$ as Lie algebras. Since $\dim V\ge 3$ and $V$
is an irreducible $\mathfrak{so}(V)$-module, the Lie algebra
$\mathfrak{so}(V)$ is semisimple and the subspace of
$\mathfrak{so}(V)$-invariants in $S^2(V)$ is one-dimensional. This
implies that $\g_e=[\g_e,\g_e]\oplus\CC e$. But then the results
proved in [\cite{P07}, Sect.~2] show that the commutative quotient
$U(\g,e)^{\rm ab}$ of $U(\g,e)$ is generated by a Casimir element
$\Omega$ of $U(\g)$ (one should keep in mind here that $\Omega$ can
be regarded as one of the PBW generators of $U(\g,e)$, namely,
$\Omega=\Theta_e$). Since the Krull dimension of $U(\g,e)^{\rm ab}$
is positive by [\cite{P10}, Thm.~1.2] we now obtain that
$U(\g,e)^{\rm ab}\cong\,\CC[X]$ as algebras.

\smallskip

For $t\in\CC$ we denote by $L_t$ the $\g$-module induced from the
one-dimensional module $\CC\, v_t$ over the parabolic subalgebra
$\g(0)\oplus\g(2)$ of $\g$ such that $z\cdot v_t=tv_t$, where $z$ is
a fixed nonzero central element of the Levi subalgebra
$\g(0)\cong\mathfrak{gl}(V)$ of $\g$. By Conze's theorem, each ideal
$I_t:={\rm Ann}_{U(\g)}\,L_t$ of $U(\g)$ is completely prime (hence
prime). As the centre of $U(\g)$ acts on $L_t$ by scalar operators,
the Dixmier--M{\oe}glin equivalence yields that each $I_t$ is an
induced primitive ideal of $U(\g)$. Note that in the present case
$\m=\g(-2)$ and each $L_t$ is a free $U(\m)$-module of rank $1$.
Therefore, the subspace $M_t:=\{\psi\in
L_t^*\,|\,\,\psi(\m_\chi\cdot L_t)=0\}$ of the (full) dual space
$L_t^*$ is a one-dimensional $U(\g,e)$-module (recall that $\m_\chi$
is the subspace of $U(\g)$ spanned by all $x-\chi(x)$ with
$x\in\m$). We denote by $\widetilde{M}_t$ the submodule of the
$\g$-module $L_t^*$ generated by $M_t$.  Since ${\rm
Ann}_{U(\g)}\,V^*=({\rm Ann}_{U(\g)}\,V)^\top$ for any $\g$-module
$V$, for every $t\in\CC$ we have the inclusion  $I_{t}^\top\subseteq
{\rm Ann}_{U(\g)}\,\widetilde{M}_{t}$ (here $\scriptstyle{\top}$
stands for the principal anti-automorphism of $U(\g)$). The
Dixmier--Moerlin equivalence implies that each $I_t^\top$ is a
primitive ideal of $U(\g)$, whilst a routine verification shows that
${\rm gr}(I_t^\top)={\rm gr}(I_t)$ and hence
$\mathcal{VA}(I_t^\top)=\mathcal{VA}(I_t)$; see [\cite{P07}, 3.3]
for more detail.

\smallskip

It is immediate from Skryabin's equivalence that
$\widetilde{M}_{t}\cong Q_e\otimes_{U(\g,e)}M_{t}$ as $\g$-modules;
see [\cite{Sk}]. Then ${\rm Ann}_{U(\g)}\,\widetilde{M}_{t}=I_{M_t}$
and hence $\mathcal{VA}(I_t^\top)\supseteq \mathcal{VA}(I_{M_{t}})$.
Since each induced $\g$-module $L_t$ is holonomic and $e$ is a
Richardson element of $\g(0)\oplus\g(2)$, we also have that $\dim
\mathcal{VA}(I_t)=\dim \overline{\O(e)}=\dim
\mathcal{VA}(I_{M_{t}})$; see [\cite{P07}, Thm.~3.1(ii)]. In view of
the above remarks, this gives
$\mathcal{VA}(I_t^\top)=\mathcal{VA}(I_t)= \mathcal{VA}(I_{M_{t}})$.
As both $I_t^\top$ and $I_{M_t}$ are primitive ideals of $U(\g)$,
applying [\cite{BK}, Corollar~3.6] yields $I_t^\top=I_{M_{t}}$.
Since for $\g=\mathfrak{sp}_{2n}$ all homogeneous $(\Ad
G)$-invariants of $S(\g)$ have even degrees, one observes easily by
using the symmetrisation map $U(\g)\stackrel{\sim}{\to} S(\g)$ that
every set ${\mathcal X}_\lambda$ is stable under the principal
anti-automorphism $\scriptstyle{\top}$. The uniqueness of $I_{\rm
max}(\rho/2)\in{\mathcal X}_{\rho/2}$ then yields $I_{\rm
max}(\rho/2)=I_{\rm max}(\rho/2)^\top$.

\smallskip

On the other hand, it is well known that there exists a quadratic
polynomial $\phi\in\CC[X]$ such that the eigenvalue of $\Omega$ on
the parabolically induced $\g$-module $L_t$ equals $\phi(t)$ for any
$t\in\CC$. In conjunction with the above this shows that for every
$\varkappa\in\CC$ there is a $t(\varkappa)\in\CC$ such that $\Omega$
acts on $\widetilde{M}_{t(\varkappa)}$ as $\varkappa\,{\rm Id}$.
Since $U(\g,e)^{\rm ab}=\,\CC[\Omega]$ by our earlier remarks, we
now deduce that {\it all} primitive ideals $I_N$ associated with
one-dimensional $U(\g,e)$-modules $N$ belong to the set
$\{I_t^\top\,|\,\,t\in\CC\}$. Now $I_{\rm max}(\rho/2)=I_M$ for some
finite dimensional irreducible $U(\g,e)$-module $M$. If $\dim M=1$,
then $I_{\rm max}(\rho/2)^\top=I_{\rm max}(\rho/2)=I_{t_0}^\top$ for
some $t_0\in \CC$. But then $I_{\rm max}(\rho/2)=I_{t_0}$. Since the
primitive ideal $I_{\rm max}(\rho/2)=I_M$ is {\it not} induced by
[\cite{Mg2}, p.~45], we reach a contradiction thereby proving that
$\dim M>1$. As ${\rm rk}\big(U(\g)/I_{\rm max}(\rho/2)\big)=1$,
Theorem~{\rm B} implies that the Goldie field of the primitive
quotient $U(\g)/I_{\rm max}(\rho/2)$ is {\it not} isomorphic to a
Weyl skew-field. This example shows that Joseph's version of the
Gelfand--Kirillov conjecture fails for $\g=\mathfrak{sp}_{2n}$ with
$n\ge 3$.
\end{rem}
\begin{rem}\label{rr-r}
It seems that any attempt to generalise Theorem~{\rm B} would
require a rather detailed information on the structure of the Goldie
field $\mathcal{D}_M$. In the proof given above it was crucial for
us to know that $\mathcal{D}_M$ is generated as a skew-field over
$\CC$ by its $A$-subalgebra $\Delta$ with the following property:
for every $p\in\Pi(A)$ the $\k$-algebra
$\Delta_\k:=\Delta\otimes_A\k$ becomes the full matrix algebra
$\Mat_{p^d}(\widetilde{K})$ after a suitable algebraic field
extension $\widetilde{K}/\mathcal{Q}(Z(\Delta_\k))$ of the fraction
field of the centre of $\Delta_\k$. When $\mathcal{D}_M$ is a Weyl
skew-field, this condition is satisfied with the obvious choice of
$\Delta$, but the validity of Conjecture~\ref{conj} alone would not
guarantee that $\mathcal{D}_M$ contains an $A$-subalgebra $\Delta$
as above with the property that $\mbox{PI-deg}(\Delta_\k)=p^d$. The
best one could hope for would be an equality
$\mbox{PI-deg}(\Delta_\k)=sp^d$ with an explicit bound on $s$
independent of $p$. One might, for example, wonder whether $s$
always divides the order of the Weyl group $W=\langle
s_\gamma\,|\,\,\gamma\in\Phi\rangle$ or, more strongly, whether
$\mathcal{D}_M$ is always a crossed product of a Weyl skew-field
$\mathcal{F}$ and a finite group acting on $\mathcal{F}$ by algebra
automorphisms. Such unexplored possibilities make
Conjecture~\ref{conj} more flexible and hence harder to disprove
than the Gelfand--Kirillov conjecture for primitive quotients.
\end{rem}
\subsection{} Let $M$ and $M'$ be two generalised Gelfand--Graev
models of a primitive ideal $\I\in{\mathcal X}_\O$, so that
$\I=I_M=I_{M'}$. As we already mentioned in the Introduction, it was
conjectured by the author and proved by Losev in [\cite{Lo1}] that
$[M']=\,^{\!\gamma\!}[M]$ for some $\gamma\in\Gamma(e)$. We would
like to conclude this paper by showing that Conjecture~\ref{conj}
implies Losev's result.

\smallskip

Suppose $\bar{R}_\k$ is a prime ring and let $l=\dim V$, $l'=\dim
V'$. Let $\Gamma$ be a subset of $C(e)=G_e\cap G_f$ which maps
bijectively onto $\Gamma(e)$ under the canonical homomorphisms
$C(e)\to\Gamma(e)=C(e)/C(e)^\circ$. Let us assume for a
contradiction that $M'\not\cong\, ^{\!\gamma\!}M$ for any $\gamma\in
\Gamma$. Arguing as in \ref{2.6} we can find an admissible ring
$A\subset \CC$ and free $A$-submodules $M_A$ and $M'_A$ of $M$ and
$M'$, respectively, stable under $U(\g_A,e)$ and such that $M\cong
M_A\otimes_A \CC$ and $M'\cong M'_A\otimes_A\CC$. For every
$p\in\Pi(A)$ we then get $U(\g_\k,e)$-modules $M_\k=M_A\otimes_A\k$
and $M'_\k=M_A'\otimes_A\k$, where $\k=\overline{\Bbb F}_p$. As in
\ref{2.6} we localise further to reduce to the case where  $M_\k$
and $M'_\k$ are irreducible $U(\g_\k,e)$-modules for all
$p\in\Pi(A)$. Associated with $M_\k$ and $M'_\k$ are
$\bar{R}_\k$-modules $\widetilde{M}_{\k,\,\Psi}$ and
$\widetilde{M}'_{\k,\Psi'}$, where $\Psi,\Psi'\in\chi+\m_\k^\perp$;
see \ref{2.6} for more detail.

\smallskip

Recall from \ref{2.3} that $U(\g_A,e)$ is a free $A$-module with
basis consisting of the PBW monomials in $\Theta_1,\ldots,
\Theta_r$. Since $\Gamma$ is a finite set, we may assume (after
extending $A$ if necessary) that the $A$-form $U(\g_A,e)$ of
$U(\g,e)$ is stable under the action of the subgroup of $C(e)$
generated by $\Gamma$. Then each $^{\!\gamma\!}M_A$ with
$\gamma\in\Gamma$ can be regarded as a $U(\g_A,e)$-module. For
$\gamma\in\Gamma$, the equality ${\rm
Hom}_{U(\g,\,e)}(\,^{\!\gamma\!}M,M')=0$ comes down to the fact that
a certain homogeneous system of linear equations in $ll'$ unknowns
with coefficients in $A$ has no nonzero solutions. After inverting
in $A$ one of the  nonzero $ll'\times ll'$ minors of the matrix of
this homogeneous system we may assume that ${\rm
Hom}_{U(\g_\k,\,e)}(\,^{\!\gamma\!}M_\k,M_\k')=0$ for all
$p\in\Pi(A)$ and all $\gamma\in\Gamma$.

\smallskip

Recall from [\cite{P07'}] and [\cite{P10}] the subset $\pi(A)$ of
$\Pi(A)$; it consists of all primes $p\in\N$ such that
$A/\P\cong\mathbb{F}_p$ for some $\P\in{\rm Specm}\,A$. By
[\cite{P10}, Lemma~4.4], the set $\pi(A)$ is infinite.  The
preceding remark then shows that no generality will be lost by
assuming that $p\in\pi(A)$ and $^{\gamma\!}M_\k\not\cong M_\k'$ as
$U(\g_\k,e)$-modules for all $\gamma\in\Gamma$. Enlarging $A$
further if need be we may also assume that $\I={\rm
Ann}_{U(\g)}\,L(\lambda)$ and $\I_A\subseteq{\rm
Ann}_{U(\g_A)}\,L_A(\lambda)$ for some irreducible highest weight
module $L(\lambda)$ and that $A$ satisfies all the requirements of
[\cite{P10}, Sect.~4]. Since $\pi(A)$ is an infinite set, we may
also assume that the base change $A\to A/\P\hookrightarrow \k$
identifies $\Gamma\subset G(A)$ with a subset of $Z_{G_\k}(\chi)$
which maps {\it onto} the component group of $Z_{G_\k}(\chi)$ under
the canonical homomorphism $Z_{G_\k}(\chi)\to
Z_{G_\k}(\chi)/Z_{G_\k}(\chi)^\circ$.

\smallskip

Let $\P\in{\rm Specm}\,A$ be such that $A/\P\cong \mathbb{F}_p$. As
explained in [\cite{P10}, 4.5] the $R_\k$-module
$L_\P(\lambda)=L_A(\lambda)\otimes_A\k_\P$ has an irreducible
quotient, $L_\P^\eta(\lambda)$, which has $p$-character $\eta\in
({\rm Ad}^*\,G_\k)\,\chi$. As the ideal $N_\k$ is nilpotent,
$L_\P^\eta(\lambda)$ is an irreducible $\bar{R}_\k$-module. Since we
assume that the algebra $\bar{R}_\k$ is prime, the variety ${\rm
Specm}\,Z_p(\bar{R}_\k)\subset\g_\k^*$ is irreducible and $({\rm
Ad}^*\,G_\k)$-stable. By Proposition~\ref{Krull}, it has dimension
$2d$ which forces ${\rm Specm}\,Z_p(\bar{R}_\k)\,=\,\overline{({\rm
Ad}^*\,G_\k)\,\chi}$. But then both $\Psi$ and $\Psi'$ are $({\rm
Ad}^*\,G_\k)$-conjugate to $\chi$. As explained in Remark~\ref{rrr}
we can replace $\Psi$ and $\Psi'$ by their $({\rm
Ad}^*\,\mathcal{M}_\k)$-conjugates. In view of [\cite{P10},
Lemma~3.2] and standard properties of Slodowy slices, we therefore
may assume further that $\Psi=\Psi'=\chi$.

\smallskip

Denote by $\c$ and $\c'$ the annihilators in $Z(\bar{R}_\k)$ of
$\widetilde{M}_{\k,\,\Psi}$ and $\widetilde{M}'_{\k,\,\Psi'}$,
respectively. As $\mu(\c)=\mu(\c')=\chi$, Proposition~\ref{orbit}
shows that $\c'=\gamma_0(\c)$ for some $\gamma_0\in\Gamma$. On the
other hand, arguing as in \ref{4.6} it is straightforward to see
that $\c,\c'\in {\rm Az}(\bar{R}_\k)$. From this it follows that
$\widetilde{M}'_{\k,\,\Psi'}\,\cong\,^{\gamma_0\!}(\widetilde{M}_{\k,\,\Psi})$
as $\bar{R}_\k$-modules and hence as $U_\chi(\g_\k)$-modules. In
view of the Morita equivalence mentioned in \ref{2.5} this implies
that $$M_\k'\,\cong\, {\rm
Wh}_\chi(\widetilde{M}'_{\k,\,\Psi'})\,\cong\,{\rm Wh}_\chi\,
^{\gamma_0\!}(\widetilde{M}_{\k,\,\Psi})\,\cong\,{\rm
Wh}_\chi\,^{\gamma_0\!}\big(Q_e^\chi\otimes_{U_\chi(\g_\k,\,e)}\,M_\k\big)$$
as $U(\g_\k,e)$-modules. The adjoint action of $Z_{G_\k}(\chi)$ on
$\g_\k$ gives rise to a natural group homomorphism
$Z_{G_\k}(\chi)\to {\rm Aut}\,U_\chi(\g_\k)$. By [\cite{P02},
Thm.~2.3(i)], the left regular module $U_\chi(\g_\k)$ is isomorphic
to a direct sum of $p^d$ copies of $Q_e^\chi$. Since the same is
true with $\gamma_0(Q_e^\chi)\subset U_\chi(\g_\k)$ in place of
$Q_e^\chi$, we see that the projective $U_\chi(\g_\k)$-modules
$Q_\chi^e$ and $^{\gamma_0\!}(Q_e^\chi)\,\cong\,\gamma_0(Q_\chi^e)$
are isomorphic. Note that the right action of $U_\chi(\g_\k,e)$ on
$^{\gamma_0\!}(Q_e^\chi)$ is the $\gamma_0$-twist of that on
$Q_e^\chi$, where $\gamma_0\in\Gamma$ is now regarded as an
automorphism of $U(\g_\k,e)$. Comparing common
$(\Ad\gamma_0)(\m_\k)$-eigenvectors one observes that the
$U_\chi(\g_\k)$-modules $^{\gamma_0\!}(\widetilde{M}_{\k,\,\Psi})$
and
$^{\gamma_0\!}(Q_e^\chi)\otimes_{U_\chi(\g_\k,\,e)}{^{\gamma_0\!}}M_\k$
are isomorphic. As a consequence,
$$^{\gamma_0\!}(\widetilde{M}_{\k,\,\Psi})
\cong\,^{\gamma_0\!}(Q_e^\chi)\otimes_{U_\chi(\g_\k,\,e)}{^{\gamma_0\!}}M_\k
\,\cong\,Q_e^\chi\otimes_{U_\chi(\g_\k,\,e)}{^{\gamma_0\!}M_\k}
$$ as $U_\chi(\g_\k)$-modules.
In view of the above-mentioned Morita equivalence this entails that
$M_\k \cong  {^{\gamma_0\!} M_\k}$ as $U(\g_\k,e)$-modules.

\smallskip

We have reached a contradiction thereby showing that $M'\cong\,
^{\!\gamma\!}M$ for some $\gamma\in\Gamma$.

\end{document}